\documentclass[11pt]{article}
\usepackage[T1]{fontenc}
\usepackage{lmodern}
\usepackage[colorlinks=true,linkcolor=RoyalBlue,urlcolor=RoyalBlue,citecolor=red]{hyperref}
\usepackage{amsmath,amsthm,amssymb,amsfonts}
\usepackage{enumerate}
\usepackage{epsfig}
\usepackage[dvipsnames,svgnames,table]{xcolor}
\usepackage{pgf,tikz}
\usetikzlibrary{calc}
\usepackage{fullpage}

\usepackage{booktabs}
\usepackage[nameinlink]{cleveref}
\usepackage{enumitem}
\usepackage{blkarray}
\usepackage{tikz-cd}

\usepackage{thmtools, thm-restate}

\newtheorem{theorem}{Theorem}[section]
\newtheorem{proposition}[theorem]{Proposition}
\newtheorem{lemma}[theorem]{Lemma}
\newtheorem{corollary}[theorem]{Corollary}

\newtheorem{claim}[theorem]{Claim}

\theoremstyle{definition}

\theoremstyle{remark}
\newtheorem{remark}[theorem]{Remark}

\newenvironment{claimproof}[1][\proofname]
	{
		\proof[#1]%
			
	}
	{
		\endproof
	}

\newcommand{\CC}{\mathbb{C}}
\newcommand{\RR}{\mathbb{R}}
\newcommand{\QQ}{\mathbb{Q}}

\newcommand{\FF}{\mathbb{F}}

\renewcommand{\SS}{\mathbb{S}}

\DeclareMathOperator{\td}{td}
\DeclareMathOperator{\cliq}{Cliq}

\tikzstyle{vertex}=[fill=black,circle,inner sep=0pt, minimum size=4pt]
\tikzstyle{edge}=[line width=1.5pt,black]
\tikzstyle{bedge}=[line width=1.5pt,RoyalBlue]
\tikzstyle{dedge}=[line width=1.5pt,black,dashed]

\title{The number of realisations of a random graph}
\author{Sean Dewar\thanks{Department of Computer Science, KU Leuven. E-mail: \texttt{sean.dewar@kuleuven.be}}  \and Anthony Nixon\thanks{School of Mathematical Sciences, Lancaster University. E-mail: \texttt{a.nixon@lancaster.ac.uk}}  \and Ben Smith\thanks{School of Mathematical Sciences, Lancaster University. E-mail: \texttt{b.smith9@lancaster.ac.uk}}}

\begin{document}
\date{}
\maketitle

\begin{abstract}
Determining the number of realisations, up to isometries, of a graph for a specific choice of edge lengths is a fundamental problem in discrete geometry. 
In this article we prove that, asymptotically almost surely, the $d$-dimensional complex realisation number for each $n$-vertex graph in an Erd\H{o}s--R\'enyi random graph process is either infinite or equal to $2^{n-t}$ where $t$ is the size of the $(d+1)$-core; moreover this number coincides exactly with the real realisation number for such graphs.
We also determine a similar formula for the number of complex solutions to the generic rank-$d$ positive semi-definite matrix completion problem with randomly selected non-diagonal unknown entries.
\end{abstract}



\section{Introduction}\label{sec:intro}

Given a finite, simple graph $G=(V,E)$,
a map $\mathbf{p} :V \rightarrow \mathbb{R}^d$ with $\mathbf{p}(v) = (\mathbf{p}_1(v), \ldots, \mathbf{p}_d(v))$ for each $v \in V$ is called a \emph{realisation of $G$ in $\mathbb{R}^d$}.
Two realisations $\mathbf{p},\mathbf{q}$ of $G$ in $\mathbb{R}^d$ are said to be \emph{equivalent} if they satisfy the following equalities:
\begin{equation} \label{eq:edge+length}
    \sum_{i=1}^d \Big( \mathbf{p}_i(v) - \mathbf{p}_i(w) \Big)^2 = \sum_{i=1}^d \Big( \mathbf{q}_i(v) - \mathbf{q}_i(w) \Big)^2 \qquad \text{ for all } vw \in E,
\end{equation}
i.e.~any edge $vw$ has the same Euclidean length in $\mathbf{p}$ and $\mathbf{q}$.
With this, we fix $r_d(G,\mathbf{p}) \in \mathbb{N} \cup \{\infty\}$ to be the number of equivalent realisations of $G$ in $\mathbb{R}^d$ modulo isometries of $\mathbb{R}^d$, i.e.~combinations of rotations, reflections and translations.
Asimow and Roth \cite{AsimowRothI} proved that if $\mathbf{p}$ is a \emph{generic} realisation of $G$ in $\mathbb{R}^d$ (as defined in \Cref{subsec:realisation}) then $r_d(G,\mathbf{p})$ is finite if and only $r_d(G,\mathbf{q})$ is finite for every other generic realisation $\mathbf{q}$ of $G$ in $\mathbb{R}^d$.
Because of this, we say that $G$ is \emph{$d$-rigid} if $r_d(G,\mathbf{p})$ is finite for some generic realisation of $G$ in $\mathbb{R}^d$.

In contrast to rigidity being a generic property,
the exact value of $r_d(G,\mathbf{p})$ (when finite) is often not consistent over all generic realisations.
Indeed,
it is not hard to construct small examples of graphs that have different numbers of realisations depending on the edge lengths chosen; see for example \Cref{fig:notgeneric}.
To get around this, we can associate a single `generic realisation number' to a graph in two different ways:
\begin{enumerate}
    \item We define the \emph{real $d$-realisation number} -- here denoted $r_d(G)$ -- to be the largest value of $r_d(G,\mathbf{p})$ over all generic realisations $p$ in $\mathbb{R}^d$.
    \item We extend $r_d(G,\mathbf{p})$ to additionally count complex realisations.
    We let $c_d(G,\mathbf{p})$ be the number of complex realisations $\mathbf{q}\colon V \rightarrow \CC^d$ that satisfy \eqref{eq:edge+length} modulo (complex) rotations, reflections and translations. 
    The value $c_d(G,\mathbf{p})$ \emph{is} a generic property, hence we define the \emph{(complex) $d$-realisation number} -- here denoted $c_d(G)$ -- to be $c_d(G,\mathbf{p})$ for any generic realisation $\mathbf{p}$ in $\mathbb{C}^d$. 
\end{enumerate}
It is immediate that $c_d(G) \geq r_d(G)$ for all graphs, and it was proven by Gortler and Thurston \cite{Gortler2014} that $r_d(G)=1$ (i.e.~$G$ is \emph{globally $d$-rigid}) if and only if $c_d(G)=1$.

\begin{figure}[tp]
	\begin{center}
        \begin{tikzpicture}[scale=0.6]
        \begin{scope}[xshift=0]
			\node[vertex] (1) at (0,1) {};
			\node[vertex] (2) at (0,-1) {};
			
			\node[vertex] (3) at (-1.5,-0.5) {};
			\node[vertex] (4) at (1.5,0.5) {};
			\node[vertex] (5) at (-0.5, 1.5) {};
			
			\draw[edge] (1)edge(2);
			\draw[edge] (1)edge(3);
			\draw[edge] (1)edge(4);
			\draw[edge] (2)edge(3);
			\draw[edge] (2)edge(4);
			\draw[edge] (5)edge(3);
			\draw[edge] (5)edge(4);
        \end{scope}
		\begin{scope}[xshift=100]
			\node[vertex] (1) at (0,1) {};
			\node[vertex] (2) at (0,-1) {};
			
			\node[vertex] (3) at (1.5,-0.5) {};
			\node[vertex] (4) at (1.5,0.5) {};
			\node[vertex] (5) at (3.679,0) {};
			
			\draw[edge] (1)edge(2);
			\draw[edge] (1)edge(3);
			\draw[edge] (1)edge(4);
			\draw[edge] (2)edge(3);
			\draw[edge] (2)edge(4);
			\draw[edge] (5)edge(3);
			\draw[edge] (5)edge(4);
		\end{scope}
        \begin{scope}[xshift=275]
            \draw[line width=1pt] (0,1.5) -- (0,-1.5);
		\end{scope}
        \begin{scope}[xshift=350]
			\node[vertex] (1) at (0,1) {};
			\node[vertex] (2) at (0,-1) {};
			
			\node[vertex] (3) at (1.5,-0.5) {};
			\node[vertex] (4) at (1.5,0.5) {};
			\node[vertex] (5) at (2,0) {};
			
			\draw[edge] (1)edge(2);
			\draw[edge] (1)edge(3);
			\draw[edge] (1)edge(4);
			\draw[edge] (2)edge(3);
			\draw[edge] (2)edge(4);
			\draw[edge] (5)edge(3);
			\draw[edge] (5)edge(4);
        \end{scope}
		\end{tikzpicture}
	\end{center}
    \caption{Two different choices of edge lengths for the same graph. The left choice gives $r_2(G,\mathbf{p}) = 4$: the two realisations pictured plus two more via reflecting the degree 2 vertex through the line passing through its neighbours. The right choice only gives $r_2(G,\mathbf{p})=2$: again, the additional realisation can be found via reflecting the degree 2 vertex.}\label{fig:notgeneric}
\end{figure}

Outside of the almost-trivial case of $d=1$,
both real and complex realisation numbers are often difficult to compute.
For example, using Gr\"{o}bner basis to compute $c_d(G)$ very quickly becomes computationally untenable.
A combinatorial algorithm for computing $c_2(G)$ for minimally 2-rigid graphs is known \cite{cggkls},
which was recently extended to all 2-rigid graphs \cite{dgstw25}.
Unfortunately, no such combinatorial algorithms are known for computing $c_d(G)$ when $d \geq 3$ or $r_d(G)$ when $d \geq 2$.
Many techniques have been developed in the last two decades to provide exact values and bounds for both $c_d(G)$ and $r_d(G)$:
see for example \cite{BARTZOS2021189,BS04,JO19,Stef10}.

Since computing the (real) $d$-realisation number of a specific graph can be difficult,
a simpler question one may ask is: how do (real) $d$-realisation numbers behave for \emph{most} graphs
with some bounded number of edges?
These types of questions are usually formalised using one of the \emph{Erd\H{o}s--R\'enyi random graph models}:
\begin{enumerate}
    \item \emph{$G(n,M)$ model:} a graph with vertex set $[n]$ and exactly $M$ edges that is chosen uniformly at random from the set of $M$-edge graphs with vertex set $[n]$.
    \item \emph{$G(n,p)$ model:} a graph with vertex set $[n]$ where each distinct pair of vertices is adjacent with probability $p$, independent of all other distinct pairs.
\end{enumerate}

A stronger question we can ask is:
how do the (real) $d$-realisation numbers change as we add edges at random?
In this case we turn to the \emph{Erd\H{o}s--R\'enyi random graph process}, which we define as follows.
Given a fixed value $n$, choose a total ordering $\omega$ of the edges of $K_n$\footnote{Here each total ordering is a bijective map $\omega: E(K_n) \rightarrow \{1, \ldots, \binom{n}{2}\}$, where $E(K_n)$ is the edge set of $K_n$.} uniformly at random from the set of all such total orderings.
For each integer $0\leq M \leq \binom{n}{2}$, fix $G(n,M,\omega)$ to be the graph with vertex set $[n]$ and edge set $\{ vw : \omega(vw) \leq M\}$.
More on Erd\H{o}s--R\'enyi random graph processes can be found in \cite[Section 2]{Frieze_K_2015}.

Any graph that is $d$-rigid with at least $d+1$ vertices must have minimum degree at least $d$ (see, for example, \cite[Lemma 2.6.1.b(ii)]{GraverServatius}).
Similarly, it follows from a result of Hendrickson \cite[Theorem 5.9]{Hen92} that any graph that is globally $d$-rigid with at least $d+2$ vertices must have minimum degree at least $d +1$.
Lew, Nevo, Peled and Raz~\cite{lewetal} proved that these are sharp thresholds, and that with high probability, $d$-rigidity (respectively, global $d$-rigidity) occurs as soon as the minimum degree is at least $d$ (respectively, $d+1$):

\begin{theorem}[Lew, Nevo, Peled and Raz~\cite{lewetal}]\label{thm:lnpr}
    Let $\omega$ be chosen uniformly at random from the set of total orderings of the edges of $K_n$.
    Then the following property holds a.a.s.~(with respect to $n$):
    for any integer $0 \leq M \leq \binom{n}{2}$ and graph $G=(G,M,\omega)$:
    \begin{enumerate}
        \item $G$ is $d$-rigid if and only if the minimum degree of $G$ is at least $d$, and
        \item $G$ is globally $d$-rigid if and only if the minimum degree of $G$ is at least $d+1$.
    \end{enumerate}
\end{theorem}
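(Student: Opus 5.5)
This theorem is quoted from Lew, Nevo, Peled and Raz. To reprove it, I would treat the two parts separately. In each, the ``only if'' direction holds deterministically by the cited facts: $d$-rigidity forces minimum degree at least $d$, and Hendrickson's condition forces minimum degree at least $d+1$. So the work is the ``if'' direction, uniformly over the whole process. The governing idea is the hitting-time principle. For $M$ below the stopping time $\tau_d=\min\{M:\delta(G(n,M,\omega))\ge d\}$ there is nothing to prove, since no graph with minimum degree below $d$ can be $d$-rigid. Both properties are monotone increasing in $M$, because adding edges preserves (global) rigidity once the vertex set is fixed. Hence it suffices to show that a.a.s.\ the single graph $G(n,\tau_d,\omega)$ is $d$-rigid, and similarly that $G(n,\tau_{d+1},\omega)$ is globally $d$-rigid.

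For rigidity at the hitting time, I would split the vertex set into ``light'' vertices and the rest. The light vertices are those of degree at most some slowly growing function, say $\log n/(100\log\log n)$. Near the threshold $M\approx \tfrac n2(\log n+(d-1)\log\log n)$, standard first-moment computations show three things a.a.s. First, there are few light vertices. Second, any two light vertices are far apart; in particular they are pairwise non-adjacent and have no common neighbours. Third, the remaining ``heavy'' graph $H$ is a good expander with large minimum degree. I would then show $H$ is $d$-rigid. Rigidity is guaranteed once $H$ contains a suitable spanning structure: for instance, I would try to cover $H$ by overlapping globally rigid pieces (cliques found in neighbourhoods are too sparse, so instead I would use a $d$-dimensional rigidity expander criterion, in the style of Krivelevich--Lew--Michaeli). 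Finally, each light vertex has at least $d$ neighbours in $H$ by the hitting-time condition and distinct neighbourhoods, so attaching it is a $0$-extension (Henneberg move), which preserves $d$-rigidity.

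For global rigidity at $\tau_{d+1}$, the same decomposition works once two changes are made. The heavy part must be shown globally $d$-rigid. Attaching a light vertex with at least $d+1$ neighbours preserves global rigidity, by the standard vertex-addition lemma: adding a vertex joined to at least $d+1$ vertices of a globally rigid graph in generic position keeps it globally rigid. For the heavy part, I would use Connelly's/Gortler--Healy--Thurston's stress criterion, or more simply show that $H$ is redundantly rigid and $(d+1)$-connected. By Jackson--Jord\'an these conditions suffice when $d=2$, but for general $d$ I would instead glue overlapping globally rigid subgraphs sharing at least $d+1$ vertices along a connected ``overlap graph''.

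I expect the main obstacle to be establishing (global) $d$-rigidity of the dense heavy part for general $d$, as a deterministic consequence of pseudorandom expansion properties. I would try first a sufficient condition of the form: every vertex's neighbourhood is large, and every pair of disjoint large sets spans many edges. From this I would construct a $d$-rigid (respectively globally rigid) spanning subgraph by a greedy sequence of $d$- or $(d+1)$-valent vertex additions, ordered by a breadth-first exploration from an initial $K_{d+2}$. Expansion would guarantee that each new vertex has enough already-placed neighbours. The remaining probabilistic steps are routine union bounds.
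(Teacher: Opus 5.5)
There is a genuine gap. The paper gives no proof of this statement; it is quoted from Lew--Nevo--Peled--Raz. Section~3 does essentially recover it, modulo Proposition~\ref{prop:neighbourlyprop}, which also comes from their paper:
\begin{itemize}
\item for $M\ge M_d(\omega)$ the $d(d+1)$-core has at least $5n/9$ vertices and is $d(d+1)$-connected, so it is globally $d$-rigid by Vill\'anyi's theorem (Theorem~\ref{thm:soma});
\item the $d$-neighbourly property then attaches every remaining vertex with at least $d$ earlier neighbours;
\item Lemma~\ref{lem:0extplusedgesisgr} gives global rigidity once the graph equals its $(d+1)$-connected $(d+1)$-core.
\end{itemize}

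Several parts of your proposal are sound: the reduction to the two hitting times, the light/heavy split, and the attachment of light vertices. These are attached by $0$-extensions at $\tau_d$, and by Lemma~\ref{lem:d+1dim0ext} at $\tau_{d+1}$, since every neighbour of a light vertex is heavy. The gap is exactly the step you flag yourself. You never show that the heavy part $H$ is $d$-rigid, let alone globally $d$-rigid, and the one concrete mechanism you offer cannot work.

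For $d\ge 2$ the graph is locally sparse at these densities. By the Sparsity Property (Lemma~\ref{lem:luczak5}), a.a.s.\ for all $M\le M_{d+1}(\omega)$, every vertex set $S$ with $|S|<500$ spans at most $|S|$ edges, and every $S$ with $|S|\le 2n\log^{-6}n$ spans fewer than $1.25|S|$ edges. This has three consequences:
\begin{itemize}
\item $K_{d+2}\supseteq K_4$ does not occur at all, so your starting seed does not exist.
\item A set grown from a seed of size $s_0$ by $d$-valent additions spans at least $d(|S|-s_0)$ edges, so it stalls at size $O(s_0)$. Already for $d=2$, no vertex has two neighbours in a triangle. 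Expansion between large sets is silent at these scales.
\item The gluing alternative fails the same way. A globally $d$-rigid graph on $m\ge d+2$ vertices has minimum degree at least $d+1$, hence at least $1.5m$ edges. So every such piece has more than $2n\log^{-6}n$ vertices, and producing one is the original problem.
\end{itemize}
Pointing to a criterion ``in the style of Krivelevich--Lew--Michaeli'', without stating one and checking its hypotheses for $H$, does not fill the hole.

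The missing idea is that the seed must have linear size, and its rigidity must come from a deterministic theorem converting constant connectivity into global rigidity. That theorem is Vill\'anyi's: every $d(d+1)$-connected graph is globally $d$-rigid. This is why Section~3 starts from the $d(d+1)$-core, which has more than $n/2$ vertices, so the neighbourly property always applies to the uncovered set.

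In your framework the same tool closes the argument:
\begin{itemize}
\item Each heavy vertex has at most one light neighbour, so $H$ has minimum degree $\Omega(\log n/\log\log n)$.
\item Suppose $T$ with $|T|<d(d+1)$ separated $H$, with smaller side $S_1$. Then $S_1\cup T$ spans at least $\tfrac12\delta(H)|S_1|$ edges, so sparsity forces $|S_1|>n\log^{-6}n$.
\item A property like $Q_{k,\varepsilon}$ (Lemma~\ref{lem:luczak4}), with $T$ together with the light vertices playing the role of $S_0$, then forces an edge from $S_1$ to the other side. This is a contradiction.
\item Hence $H$ is $d(d+1)$-connected, so globally $d$-rigid, and your light-vertex attachments give both parts at once.
\end{itemize}

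For this statement alone, that route is lighter than Section~3. There the ordering of Lemma~\ref{lem:weirdlem} and the weakly-globally-linked argument of Lemma~\ref{lem:0extplusedgesisgr} are needed to identify the graph as the $(d+1)$-core plus exactly $n-t$ $0$-extensions. That identification is what counting realisations requires; plain (global) rigidity does not.
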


In this paper we prove both that the real and complex realisation numbers coincide for such random graphs and that the realisation number is directly tied to the size of its \emph{$k$-core}: the unique maximal subgraph of $G$ in which every vertex has degree at least $k$.

\begin{theorem}\label{thm:random}
    Let $\omega$ be chosen uniformly at random from the set of total orderings of the edges of $K_n$.
    Then the following property holds a.a.s.~(with respect to $n$):
    for any integer $0 \leq M \leq \binom{n}{2}$,
    if the $(d+1)$-core of $G = G(n,M,\omega)$ contains exactly $t$ vertices,
    then
    \begin{equation*}
        c_d(G) = r_d(G) = 
        \begin{cases}
            2^{n-t} &\text{if $G$ has minimum degree at least $d$,}\\
            \infty &\text{otherwise}.
        \end{cases}
    \end{equation*}
\end{theorem}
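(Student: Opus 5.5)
The approach is to reduce the statement to a deterministic claim about graphs obtained from a globally $d$-rigid core by repeatedly adding vertices of degree exactly $d$. Then we verify, using known structural facts about random graph processes, that a.a.s.\ every $G(n,M,\omega)$ with minimum degree at least $d$ has this form with well-behaved attachments.

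If $G$ has a vertex of degree less than $d$ (and $n \geq d+1$), then $G$ is not $d$-rigid. Hence $r_d(G)=\infty$, and $c_d(G)=\infty$ as well, since $c_d \geq r_d$. So we may assume minimum degree at least $d$. Let $C$ be the $(d+1)$-core, with $t$ vertices. The vertices outside $C$ can be ordered $v_1,\dots,v_{n-t}$ by the peeling process: each $v_i$ has degree at most $d$ in $G[C\cup\{v_i,\dots,v_{n-t}\}]$ when it is removed.

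For the random input I would establish the following a.a.s.\ for all $M$ simultaneously, using standard first-moment estimates over the process.
\begin{enumerate}
\item When the minimum degree is at least $d$, the $(d+1)$-core $C$ is globally $d$-rigid. I would derive this from \Cref{thm:lnpr}(2) applied to $C$, or more precisely from its proof, which shows that the high-degree part is globally rigid with the low-degree vertices sparse.
\item Vertices of degree at most $d+1$ are pairwise far apart, say at distance at least $3$.
\end{enumerate}
Property (2) implies that every vertex outside the core has degree exactly $d$ in $G$. Its neighbours all lie in $C$ or are non-adjacent degree-$d$ vertices, so peeling removes them one at a time and they form an independent set whose neighbourhoods lie in $C$. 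If $t=0$ for small $M$, the minimum degree is below $d$ a.a.s.\ anyway, so that case is covered. I would also rule out a nonempty core consisting of $\le d$ vertices.

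The deterministic step is a $d$-dimensional "$0$-extension" count. Suppose $H$ is globally $d$-rigid with at least $d$ vertices, and $G$ is obtained by adding a new vertex $v$ joined to $d$ vertices $u_1,\dots,u_d$ of $H$. Then $c_d(G)=2\,c_d(H)$ and $r_d(G)=2\,r_d(H)$.

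For the complex count, generic $\mathbf{p}$ on $H$ is unique up to congruence, so we only need to count the placements of $v$. These are the common solutions of $d$ sphere equations $\|x-\mathbf{p}(u_i)\|^2=\ell_i$ in $\CC^d$. Subtracting equations gives $d-1$ independent linear equations, which cut out a generic line, and the remaining quadric meets that line in exactly $2$ distinct points, by genericity. For the real count we need a generic real $\mathbf{p}$ for which both intersection points are real. We take any generic $\mathbf{p}$ and observe that the reflection of $\mathbf{p}(v)$ through the affine hyperplane spanned by $\mathbf{p}(u_1),\dots,\mathbf{p}(u_d)$ is a second real solution. This gives $r_d(G)=2$ whenever $r_d(H)=1$, and $r_d(H)=c_d(H)=1$ holds by Gortler--Thurston.

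Since the added vertices have neighbourhoods in $C$ and are independent, we add them one at a time. At each stage the placement of $v_i$ depends only on the positions of $C$, so the counts multiply, and a single generic $\mathbf{p}$ works for all reflections simultaneously because the reflections are independent. This gives $c_d(G)=r_d(G)=2^{n-t}$.

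The main obstacle is the probabilistic input, chiefly property (1): that the $(d+1)$-core is globally $d$-rigid uniformly over the whole process, including the phase where $G$ has minimum degree $d$ but not $d+1$. I would attempt it by showing a.a.s.\ that, in that phase, the graph obtained by deleting the few degree-$d$ vertices has minimum degree $d+1$ and is "as random" as needed for the Lew--Nevo--Peled--Raz argument to go through. Alternatively, I would note that the $(d+1)$-core of $G(n,M,\omega)$ coincides with $G(n,M',\omega')$-type behaviour at the hitting time, and then invoke \Cref{thm:lnpr}(2) after checking their proof is robust to deleting a bounded number of isolated low-degree vertices.
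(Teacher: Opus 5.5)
Your counting step is fine. Once you know that the $(d+1)$-core $C$ is globally $d$-rigid and that the non-core vertices form an independent set of degree-$d$ vertices with all neighbours in $C$, the line-meets-quadric count and the reflection argument give $c_d(G)=r_d(G)=2^{n-t}$. (The doubling needs $H$ to have at least $d+1$ vertices, not $d$, since a $0$-extension of $K_d$ is $K_{d+1}$; this is harmless here.)

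The genuine gap is property (1). In the window $M_d(\omega)\le M<M_{d+1}(\omega)$ it is essentially the entire content of the theorem, and neither route you suggest establishes it.
\begin{itemize}
\item \Cref{thm:lnpr} is a statement about $G(n,M,\omega)$ itself. The $(d+1)$-core is not distributed as an Erd\H{o}s--R\'enyi graph, so the theorem cannot simply be ``applied to $C$''.
\item You cannot pass down from a globally rigid supergraph such as $G(n,M_{d+1}(\omega),\omega)$ to $C$, because global rigidity is not inherited by subgraphs. For example, deleting a rim vertex from a planar wheel with at least four rim vertices destroys global $2$-rigidity.
\item Near $M_d(\omega)$ there are $\Theta(\log n)$ vertices of degree $d$, not a bounded number. ``Checking that the LNPR proof is robust'' is not an argument until you say which step changes and why it survives.
\end{itemize}

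What is missing is a way to certify global rigidity of the $(d+1)$-core deterministically, from properties known to hold a.a.s.\ along the whole process. The paper does this in three steps.
\begin{enumerate}
\item By {\L}uczak-type results (\Cref{thm:luczaknew}), a.a.s.\ for every $M\ge M_2(\omega)$ and every fixed $k\ge 3$, the $k$-core is $k$-connected with at least $5n/9$ vertices. Taking $k=d(d+1)$, Vill\'anyi's theorem (\Cref{thm:soma}) makes $G(d(d+1))$ globally $d$-rigid.
\item The $d$-neighbourly property (\Cref{prop:neighbourlyprop}) gives the ordering of \Cref{lem:weirdlem}. Hence $G(d+1)$ contains a spanning subgraph obtained from $G(d(d+1))$ by $0$-extensions.
\item $G(d+1)$ is $(d+1)$-connected by \Cref{thm:luczaknew} with $k=d+1$. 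So \Cref{lem:0extplusedgesisgr}, proved via weakly globally linked pairs (\Cref{thm:jv} and \Cref{lem:jv}), shows $G(d+1)$ is globally $d$-rigid.
\end{enumerate}
Bootstrapping from the $d(d+1)$-core is essential, because $(d+1)$-connectivity alone does not force global $d$-rigidity when $d\ge 2$.

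Your property (2) is plausible via standard ``small vertices are far apart'' estimates, but it is an additional unproved claim. The paper does not need it. It adds $v_{t+1},\dots,v_n$ by $0$-extensions in order, possibly attaching to earlier non-core vertices, and uses \Cref{lem:0ext} and \Cref{prop:0extreal}, which handle arbitrary chains of $0$-extensions onto a $d$-rigid graph.
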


\Cref{thm:random} immediately implies the following for the two Erd\H{o}s--R\'enyi random graph models.

\begin{corollary}\label{cor:random}
    Let $(a_n)_{n \in \mathbb{N}}$ be a real sequence where $a_n \rightarrow \infty$ as $n \rightarrow \infty$.
    Suppose that either:
    \begin{enumerate}
        \item $G=G(n,p)$ with $p = (\log n + (d-1) \log \log n + a_n)/n$; or
        \item $G=G(n,M)$ with $M =  \lceil \frac{1}{2}(\log n + (d-1) \log \log n + a_n)n \rceil$.
    \end{enumerate}
    Then, given the $(d+1)$-core of $G$ contains exactly $t$ vertices,
    the following equality holds a.a.s.:
    \begin{equation*}
        c_d(G) = r_d(G) = 2^{n-t}.
    \end{equation*}
\end{corollary}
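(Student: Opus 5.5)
The corollary will follow from \Cref{thm:random} once we show that, in either model, $G$ has minimum degree at least $d$ a.a.s.

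\textbf{Step 1: minimum degree in $G(n,p)$.} Let $X$ be the number of vertices of degree at most $d-1$. By a first-moment estimate,
\begin{equation*}
\mathbb{E}[X] \le n \sum_{k=0}^{d-1} \binom{n-1}{k} p^k (1-p)^{n-1-k}.
\end{equation*}
The $k=d-1$ term dominates and is of order $n\,(np)^{d-1} e^{-np}/(d-1)!$. With $np = \log n + (d-1)\log\log n + a_n$ this becomes $n (\log n)^{d-1} \cdot n^{-1}(\log n)^{-(d-1)} e^{-a_n}(1+o(1))$, which tends to $0$. The factor $(1-p)^{-k}$ and the replacement of $(1-p)^{n}$ by $e^{-np}$ cost only $1+o(1)$, since $p^2 n \to 0$ whenever $p = O(\log n/n)$. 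If $a_n$ grows fast enough that $p$ is not $O(\log n/n)$, monotonicity of the minimum-degree property in $p$ lets us first reduce to $a_n \le \log\log n$. So $\Pr[X>0]\to 0$.

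\textbf{Step 2: minimum degree in $G(n,M)$.} Either use the standard asymptotic equivalence of $G(n,p)$ and $G(n,M)$ for monotone properties, with $M \approx p\binom{n}{2}$, or redo the first-moment computation directly with hypergeometric probabilities. Note that $M = \tfrac12 n(\log n+(d-1)\log\log n+a_n)$ corresponds to $p = M/\binom n2 \approx (\log n+(d-1)\log\log n + a_n)/n$. Absorbing the $1+o(1)$ discrepancy into $a_n$ (replace $a_n$ by $a_n/2$, say) gives minimum degree at least $d$ a.a.s.

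\textbf{Step 3: transfer from the process.} \Cref{thm:random} holds a.a.s. over $\omega$ simultaneously for all $M$. Since $G(n,M,\omega)$ for uniform $\omega$ has exactly the distribution of $G(n,M)$, the failure probability for $G(n,M)$ is at most $o(1)$ plus the probability that the minimum degree is below $d$. Hence $c_d(G) = r_d(G) = 2^{n-t}$ a.a.s.

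For $G(n,p)$, condition on the number of edges $|E(G)| = M'$. Then $G$ is distributed as $G(n,M',\omega)$, and the event from \Cref{thm:random} holds for all $M'$ at once. So again $c_d(G)=r_d(G)=2^{n-t}$ whenever the minimum degree is at least $d$, which holds a.a.s. by Step 1.

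The only real work is the first-moment computation in Step 1, specifically controlling the error terms uniformly in $a_n$. Everything else is bookkeeping via coupling with the random graph process.
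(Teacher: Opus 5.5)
Your proposal is correct and is essentially the argument the paper has in mind. The paper simply says the corollary follows immediately from \Cref{thm:random}: the given $p$ and $M$ lie above the classical Erd\H{o}s--R\'enyi threshold for minimum degree $d$, and $G(n,M,\omega)$ (respectively $G(n,M',\omega)$ with $M'\sim\mathrm{Bin}(\binom{n}{2},p)$ independent of $\omega$) has the law of $G(n,M)$ (respectively $G(n,p)$). The only difference is that you re-derive the minimum-degree threshold by a first-moment computation instead of citing it.
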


\Cref{thm:random} can be viewed as the realisation number analogue of the sharp rigidity threshold described by Lew, Nevo, Peled and Raz (\Cref{thm:lnpr}).
Indeed, our proof utilises a key `expansion' lemma from their paper (see \Cref{prop:neighbourlyprop}).
To finish the proof, we use Vill\'{a}nyi's recent ground-breaking result \cite{vill}, which in particular resolved positively the Lov\'{a}sz--Yemini conjecture, as well as classical results of {\L}uczak regarding the size and connectivity of $k$-cores in random graphs \cite{luczak}.

An important consequence of \Cref{thm:random} is that for most graphs we have $r_d(G)$ and $c_d(G)$ are equal; moreover, both values are equal to some power of 2 for which the exponent can be computed in quadratic time. 

\subsection{Random PSD matrix completion}

The combinatorics employed in the proof of \Cref{thm:random} can be adapted to solve a rather different problem involving positive semidefinite (PSD) matrix completions.

Consider the following matrix completion problem.
Given a fixed graph $G=([n],E)$ and a field $\FF \in \{\RR,\CC\}$,
we define a \emph{partial $n \times n$ symmetric matrix (with known diagonal)} 
to be any vector 
\begin{equation*}
    A = \Big((A_{i,j})_{\{i,j\} \in E}, (A_{k,k})_{k \in [n]} \Big) \in \FF^{|E|} \times \FF^n.
\end{equation*}
We say that $G$ is the \emph{underlying graph} of $A$.
We often consider the vector $A$ to be a symmetric $n \times n$ matrix where the diagonal and the entries indexed by $E$ are \emph{known entries} and the others are \emph{unknown entries}.
A \emph{rank-$d$ partial $n \times n$ PSD matrix} is any partial matrix over $\RR$ for which there exists an $n \times n$ PSD matrix $B$ with rank at most $d$ which agrees with the known entries of $A$.
We say that any such PSD matrix $B$ is a \emph{completion} of $A$.
We are interested in finding and counting completions.
As in the case of realisation numbers, we will restrict our investigation to \emph{generic} partial PSD matrices (as defined in \Cref{subsec:partialmatrix}).

In \cite{SingerCucuringu:2010},
Singer and Cucuringu proved that a generic rank-$d$ partial PSD matrix has finitely many completions if and only if its underlying graph is $(d-1)$-rigid.
However, similar to the realisation counting problem, the number of completions is not a generic property.
Because of this, we instead allow \emph{complex completions}:
any symmetric matrix $B \in \mathbb{C}^{n \times n}$ which agrees with the known entries of $A$ and can be factored as $B = P^\top P$ for some matrix $P \in \CC^{d \times n}$.
With this, the number of complex completions of a generic rank-$d$ partial PSD matrix is dependent solely on its underlying graph (See, for example, \Cref{prop:matrix+completion} below).

For integers $0 < d \leq n$, we create a \emph{random rank-$d$ partial $n \times n$ PSD matrix process} as follows:
\begin{itemize}
    \item Sample a random $d \times n$ matrix $P$\footnote{Formally, $P$ is a continuous random variable whose sample space has positive measure in the set of $d \times n$ matrices.}, and set $B = P^\top P$ to be the resulting random $n \times n$ PSD matrix of rank $d$.
    
    \item Sample a total ordering $\omega$ of the edges of $K_n$ uniformly at random.
    \item For each integer $0 \leq M \leq \binom{n}{2}$, define the projection map $\pi_M$ that maps symmetric $n\times n$ matrices $(b_{i,j})_{i,j \in [n]}$ to $((b_{ii})_{i \in [n]},(b_{ij})_{ij \in E})$, where $E$ is the edge set of the graph $G(n,M,\omega)$.
    \item Output the $n \times n$ partial PSD matrices $A_0,\ldots, A_{\binom{n}{2}}$ with $A_M=\pi_M(B)$.
\end{itemize}
Sampling random PSD matrices is well studied in random matrix theory: for example, if the rows of $P$ are drawn from a normal distribution, then $B$ is sampled from a \emph{Wishart distribution} \cite{wishart}. 
Moreover, they occur naturally as covariance matrices in a wide variety of applications including neuroscience, communications and machine learning.
It is common in real-world problems for only a subset of the entries to be observable, and so matrix completion techniques for such matrices have become an important and well-studied topic \cite{BGN17,BY14}.

\begin{theorem}\label{thm:partialmatrix}
    Let $A_0,\ldots, A_{\binom{n}{2}}$ be a random rank-$d$ partial $n \times n$ PSD matrix process.
    Then a.a.s.~the following holds for each matrix $A_M$:
    \begin{enumerate}
        \item If each row and column of $A_M$ contains at least $d$ known entries then there exist $2^{n-t}$ complex completions of $A_M$,
        where $t$ is the size of the $(d+1)$-core of $A_M$ (as defined in \Cref{subsec:partialmatrix}).
        \item If each row and column of $A_M$ contains less than $d$ known entries then there exist infinitely-many complex completions of $A_M$.
    \end{enumerate}
\end{theorem}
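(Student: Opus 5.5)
We prove \Cref{thm:partialmatrix} by reducing it to \Cref{thm:random}, using the standard correspondence between Gram matrix completion and realisations. For a graph $G$, let $G^+$ be the \emph{cone} of $G$: add a new vertex $o$ adjacent to every vertex of $G$. A factorisation $B = P^\top P$ with $P\in\CC^{d\times n}$ is the same as a placement of the columns $\mathbf{p}(i)\in\CC^d$. The known diagonal entries $B_{ii}=\mathbf{p}(i)\cdot\mathbf{p}(i)$ are the squared distances from $i$ to an origin $\mathbf{p}(o)=0$. Each known off-diagonal entry $B_{ij}$, once the diagonal is fixed, determines the squared distance $|\mathbf{p}(i)-\mathbf{p}(j)|^2 = B_{ii}+B_{jj}-2B_{ij}$. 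Completions $B$ are invariant under the orthogonal group $O(d,\CC)$ acting on $P$, and these are exactly the isometries of $\CC^d$ fixing $o$. So the number of complex completions of a generic partial matrix with underlying graph $G$ equals $c_d(G^+)$. This is \Cref{prop:matrix+completion} (presumably with the generic $A_M$ corresponding to a generic realisation of $G^+$, and with the quotient by translations killed by pinning $o$). The first step is to state this precisely, and in particular to check that genericity of $P$ yields a generic realisation of $G^+$ in the sense needed for $c_d$.

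The second step is combinatorial: we relate the $(d+1)$-core data of $G^+$ to that of $G$. Every vertex of $G$ gains exactly one degree in $G^+$, while $o$ has degree $n$. So $G$ has minimum degree at least $d-1$ iff $G^+$ has minimum degree at least $d$. The $(d+2)$-core of $G^+$ is the $(d+1)$-core of $G$ together with $o$, whenever the former is nonempty. The paper defines the ``$(d+1)$-core of $A_M$'' in \Cref{subsec:partialmatrix}, and presumably it is set up so that $n-t$ counts exactly the vertices outside the relevant core. The intended formula should then be $c_{d'}(G^+)=2^{(n+1)-t'}=2^{n-t}$ with the index shift between $d$ and $d-1$ rigidity (Singer--Cucuringu). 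I will align the indices with that definition: row $i$ having $k$ known entries (diagonal included) means vertex $i$ has degree $k-1$ in $G$ and degree $k$ in $G^+$.

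The main obstacle is that $G^+$ is not an Erd\H{o}s--R\'enyi graph process on $n+1$ vertices, so \Cref{thm:random} cannot be quoted verbatim. We must instead rerun its proof on coned random graphs. Its ingredients are the expansion lemma \Cref{prop:neighbourlyprop}, Vill\'anyi's theorem, and {\L}uczak's results on cores. These apply to $G(n,M,\omega)$ itself, and I would show each property transfers to the cone. Coning preserves rigidity and global rigidity up to a shift of dimension by one (Whiteley's coning theorem, including its complex and global versions). The argument behind \Cref{thm:random} should then go through as follows. Vertices outside the core are peeled off in the order they were removed, each having exactly $d$ neighbours at the time of removal in the cone. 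Each such removal contributes a factor $2$ via a generic reflection, and the core, being globally rigid, contributes $1$. The delicate point, as in \Cref{thm:random}, is to show a.a.s.\ that the peeled vertices attach with exactly the required degree and no more, and that the core cone is globally rigid. I expect this to follow from {\L}uczak's structure of the core together with the expansion property of $G$, which is inherited by $G^+$. The infinite case follows since if some vertex has degree below $d$ in $G^+$, then $G^+$ is not $d$-rigid.
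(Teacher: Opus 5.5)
Your proposal follows essentially the paper's route. The paper counts completions as $c^*_{d-1}(G)$ (\Cref{prop:matrix+completion}), which equals $c_d(G*o)$ by \Cref{thm:spherical+complex}. It then reruns the proof of \Cref{thm:random} on the cone, after checking that the neighbourly property and the large, highly connected cores pass to $G*o$ (\Cref{cor:random+spherical} with $d$ replaced by $d-1$). Your direct identification of Gram factorisations with realisations of the cone is just the composite of those two cited results.

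The step you flag as delicate is supplied by \Cref{lem:weirdlem} applied to the cone, together with \Cref{thm:soma} and \Cref{lem:0extplusedgesisgr}. Note that \Cref{lem:weirdlem} needs its specially constructed vertex order, not an arbitrary peeling order. The connectivity hypotheses of \Cref{thm:soma} and \Cref{lem:0extplusedgesisgr} transfer to $G*o$ because its $(k+1)$-core is the cone over the (non-empty) $k$-core of $G$, and coning raises connectivity by one.
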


\subsection{Layout of paper}

We first review notation, terminology and provide preliminary results for realisation numbers, random graphs and rigidity theory in \Cref{sec:prelims}.
In \Cref{sec:random} we establish precisely the $d$-dimensional realisation number of an Erd\H{o}s--R\'enyi random graph (\Cref{thm:random}).
We conclude the paper in \Cref{sec:matrix} by adapting results from \Cref{sec:random} to spherical realisation numbers and the PSD matrix completion problem.

\section{Preliminaries}
\label{sec:prelims}

In this section we present the required background on random graphs, realisation numbers and globally linked pairs.

\subsection{Random graph terminology}\label{subsec:randomgraphs}

We start with random graph models. See \cite{Frieze_K_2015} for more background on the topic.

An important topic for Erd\H{o}s--R\'enyi random graph processes is the  minimum degree threshold. First, let $B_n$ be the set of bijective maps $E(K_n) \rightarrow \{1, \dots, \binom{n}{2}\}$, and
let $\delta(G)$ denote the minimum degree in $G$.
For each $\omega \in B_n$,
we define the value
\begin{equation*}
    M_d(\omega) := \textrm{min} \left\{ 0 \leq M \leq \binom{n}{2}: \delta(G(n,M,\omega)) \geq d \right\}.
\end{equation*}
This variable tells us if we were to pick edges in the order given by $\omega$, then our graph would have minimum degree $d$ for the first time exactly when it has $M_d(\omega)$ edges.

We often make use of the following terminology to describe `guaranteed' properties of a random graph.
A property $\mathcal{P}$ which is well-defined for each set $S_n$ in a sequence of sets $(S_n)_{n \in \mathbb{N}}$ is said to hold \emph{asymptotically almost surely} (a.a.s.) if 
\begin{equation*}
    \lim_{n \rightarrow \infty} \mathbb{P} [ \mathcal{P} \text{ holds for } s \in S_n] = 1.
\end{equation*}

\begin{remark}
    As stated in \Cref{sec:intro},
    if a graph with at least $d+1$ (respectively, $d+2$) vertices is $d$-rigid (respectively, globally $d$-rigid) then $\delta(G) \geq d$ (respectively, $\delta(G) \geq d+1$).
    This immediately implies that for any $\omega \in B_n$,
    if $G(n,M,\omega)$ is $d$-rigid (respectively, globally $d$-rigid) then $M \geq M_d(\omega)$ (respectively, $M \geq M_{d+1}(\omega)$).
    Hence \Cref{thm:lnpr} is equivalent to the equalities
    \begin{equation*}
        \lim_{n \rightarrow \infty} \mathbb{P} \Big[ G\big(n,M_d(\omega),\omega \big) \text{ is $d$-rigid} \Big] = \lim_{n \rightarrow \infty} \mathbb{P} \Big[ G\big(n,M_{d+1}(\omega),\omega \big) \text{ is globally $d$-rigid}  \Big]= 1
    \end{equation*}
    where each probability involves sampling $\omega$ uniformly from $B_n$.
\end{remark}

\subsection{Realisation numbers}\label{subsec:realisation}

Although computing realisation numbers can be difficult, we will only require some basic background results regarding them to prove our main results.
We direct any reader wishing to learn more about the topic to \cite{dewar23,JO19,dewarsmithnixon2026}, and additionally suggest \cite[Section 10]{Gortler2014} for its discussion on congruent realisations.

We begin by defining `genericity' in our context.
Given fields $K \subset L$, we write $\td[L \colon K]$ for the transcendence degree of $L$ over $K$.
We say a realisation $p$ of $G$ in either $\mathbb{R}^d$ or $\mathbb{C}^d$ is \emph{generic} if $\td[\QQ(p) \colon \QQ] = d|V|$, where $\QQ(p)$ is the field extension of $\QQ$ generated by $\{p_i(v): i \in [d], v \in V\}$.
This is equivalent to the coordinates of $p$ being algebraically independent over $\mathbb{Q}$.
By our choice of definition for genericity, every generic realisation in $\mathbb{R}^d$ is also a generic realisation in $\mathbb{C}^d$.

For a field $\FF \in \{\RR,\CC\}$,
we define $O(d,\FF)$ to be the group of all matrices $A \in \FF^{d \times d}$ where $A^\top A = I_d$, with $I_d$ being the $d\times d$ identity matrix.
Likewise, we define $E(d,\FF)$ to be the group of affine transformations of $\FF^d$ of the form $x \mapsto Ax +y$ with $A \in O(d,\FF)$ and $y \in \FF^d$.
When we equip the vector space $\FF^d$ with the quadratic form
\begin{equation*}
    \|(x_1,\ldots,x_d)\|^2 := \sum_{i=1}^d x_i^2,
\end{equation*}
the group $E(d,\FF)$ is exactly the group of isometries of $(\FF^d,\|\cdot\|^2)$, i.e.~maps $f:\FF^d \rightarrow \FF^d$ for which $\|f(x) - f(y)\|^2 = \|x-y\|^2$ for all points $x,y \in \FF^d$.
This follows from \cite[Corollary 8]{Gortler2014}.

We define two realisations $p,q$ of a graph $G$ in $\FF^d$ to be \emph{congruent} if there exists an isometry $f$ such that $\mathbf{q} = f \circ \mathbf{p}$.
This forms an equivalence relation which we denote by $\sim$.
We define $\mathbf{p},\mathbf{q}$ to be \emph{equivalent} if $\|\mathbf{p}(v) - \mathbf{p}(w)\|^2=\|\mathbf{q}(v) - \mathbf{q}(w)\|^2$ holds for all edges $vw \in E$.
We define for any realisation $\mathbf{p}$ of $G$ in $\CC^d$ the value
\begin{equation*}
    c_d(G,\mathbf{p}) := \# \Big( \left\{ q \text{ a realisation of $G$ in $\CC^d$ that is equivalent to $p$ }   \right\} / \sim \Big) \in \mathbb{N} \cup \{\infty\}.
\end{equation*}
Similarly, if $p$ is a realisation of $G$ in $\RR^d$, define the value
\begin{equation*}
    r_d(G,\mathbf{p}) := \# \Big( \left\{ q \text{ a realisation of $G$ in $\RR^d$ that is equivalent to $p$ }   \right\} / \sim \Big) \in \mathbb{N} \cup \{\infty\}.
\end{equation*}
Unlike $r_d(G,\mathbf{p})$, the value $c_d(G,\mathbf{p})$ is the same for any choice of generic $p$ in $\CC^d$.
We define the \emph{$d$-realisation number} and \emph{real $d$-realisation number} of $G$ to be the respective values
\begin{align*}
    c_d(G) &:= c_d(G,\mathbf{p}) \qquad \qquad \text{ for any generic realisation $p$ of $G$ in $\CC^d$}, \\
    r_d(G) &:= \max \Big\{ r_d(G,\mathbf{p}) : \text{ generic realisation $p$ of $G$ in $\RR^d$} \Big\}.
\end{align*}
Since every generic realisation in $\RR^d$ is also generic in $\CC^d$, it is clear that $r_d(G) \leq c_d(G)$.

A graph $G$ is said to be \emph{$d$-rigid} (respectively, \emph{globally $d$-rigid}) if $r_d(G,\mathbf{p})$ is finite (respectively, $r_d(G,\mathbf{p})=1$) for some generic $\mathbf{p}$ in $\RR^d$.
Since rigidity is a generic property (see \cite{AsimowRothI}), $G$ is $d$-rigid if and only if $r_d(G) \leq c_d(G) < \infty$.
Gortler, Healy and Thurston \cite{gortler2010characterizing} proved that global rigidity is also a generic property,
and so $G$ is globally $d$-rigid if and only if $r_d(G)=1$.
This was later extended to complex realisations by Gortler and Thurston \cite{Gortler2014}, who showed that $c_d(G)=1$ if and only if $r_d(G)=1$.

The simplest graph operation that preserves $d$-rigidity is the \emph{$d$-dimensional 0-extension}.
This operation on $G$ adds a new vertex $x$ and $d$ new edges $xw_1, \dots, xw_d$ to distinct vertices $w_i$ of $G$.
A schematic of a 3-dimensional 0-extension is given in \Cref{fig:0-extension}.
In addition to preserving rigidity, it precisely doubles both the real and complex $d$-realisation numbers.

\begin{figure}
    \centering
    \begin{tikzpicture}[scale=0.8]

    \begin{scope}
        \draw[line width=1.5pt] (0,0) circle (30pt);
        
        \node[vertex] (w1) at (0.5,0.5) {};
        \node[vertex] (w2) at (0,0) {};
        \node[vertex] (w3) at (0.5,-0.5) {};
        
        \node (w1label) at (0,0.5) {$w_1$};
        \node (w2label) at (-0.5,0) {$w_2$};
        \node (w3label) at (0,-0.5) {$w_3$};
    \end{scope}

    \begin{scope}[xshift=75]
        \draw[thick,->] (0,0) -- (2,0);
    \end{scope}

    \begin{scope}[xshift=200]
        \draw[line width=1.5pt] (0,0) circle (30pt);
        
        \node[vertex] (w1) at (0.5,0.5) {};
        \node[vertex] (w2) at (0,0) {};
        \node[vertex] (w3) at (0.5,-0.5) {};
        \node[vertex] (x) at (2.5,0) {};
        
        \node (w1label) at (0,0.5) {$w_1$};
        \node (w2label) at (-0.5,0) {$w_2$};
        \node (w3label) at (0,-0.5) {$w_3$};
        \node (xlabel) at (3,0) {$x$};

        \draw[edge] (w1)edge(x);
        \draw[edge] (w2)edge(x);
        \draw[edge] (w3)edge(x);
    \end{scope}
\end{tikzpicture}
    \caption{A schematic of a 3-dimensional 0-extension.}
    \label{fig:0-extension}
\end{figure}












\begin{lemma}[{\cite[Lemma 7.1]{dewar23}}]\label{lem:0ext}
    Let $G = (V,E)$ be a $d$-rigid graph with at least $d+1$ vertices.
    If $G'$ is obtained from $G$ via a 0-extension, then $c_d(G') = 2c_d(G)$.
\end{lemma}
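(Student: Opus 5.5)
The plan is to compare the fibres of the two edge-length maps directly. Let $x$ be the new vertex with neighbours $w_1,\dots,w_d$ in $G'$. Fix a generic realisation $\mathbf{p}'$ of $G'$ in $\CC^d$ and let $\mathbf{p}$ be its restriction to $V$, which is again generic. Every realisation $\mathbf{q}'$ equivalent to $\mathbf{p}'$ restricts to a realisation $\mathbf{q}$ of $G$ equivalent to $\mathbf{p}$, and congruent realisations of $G'$ restrict to congruent ones. We therefore get a map from classes of $G'$-realisations to classes of $G$-realisations, and $G$ has $c_d(G)<\infty$ such classes by $d$-rigidity. The strategy is to show that each fibre of this map has exactly two elements.

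Fix a representative $\mathbf{q}$ equivalent to $\mathbf{p}$. The extensions of $\mathbf{q}$ to $G'$ are the points $z\in\CC^d$ with $\|z-\mathbf{q}(w_i)\|^2=\|\mathbf{p}'(x)-\mathbf{p}'(w_i)\|^2$ for $i=1,\dots,d$. Subtracting the first equation from the others gives $d-1$ affine linear equations $2z\cdot(\mathbf{q}(w_1)-\mathbf{q}(w_i)) = \text{const}$. If the vectors $\mathbf{q}(w_i)-\mathbf{q}(w_1)$ are linearly independent, these equations cut out an affine line $\ell=\{a+sv\}$. Substituting into the first equation gives a quadratic in $s$ with leading coefficient $\|v\|^2$. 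It has exactly two distinct roots provided that (i) $\|v\|^2\neq 0$, i.e. the line is not isotropic, and (ii) the discriminant is nonzero.

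Next I check that the two extensions are genuinely distinct classes. When they are distinct, they are swapped by the reflection through the affine hyperplane $H$ spanned by $\mathbf{q}(w_1),\dots,\mathbf{q}(w_d)$. That reflection lies in $E(d,\CC)$ because the normal $v$ is non-isotropic. I must rule out that some isometry $f$ fixing $\mathbf{q}$ up to congruence identifies the two extensions. Any isometry $f$ with $f\circ\mathbf{q}=\mathbf{q}$ fixes the affine span of $\mathbf{q}(V)$. Since $|V|\ge d+1$ and $\mathbf{q}$ is (as argued below) in general position, this span is all of $\CC^d$, so $f$ is the identity. Hence each fibre has exactly two classes, and $c_d(G')=2c_d(G)$.

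The main obstacle is justifying the nondegeneracy conditions for every equivalent $\mathbf{q}$, not just for the generic $\mathbf{p}$ itself: affine independence of $\mathbf{q}(w_i)$, non-isotropy of $H$, the affine span of $\mathbf{q}(V)$ being $\CC^d$, and a nonzero discriminant. The plan is to use genericity through transcendence degree. Since $G$ is rigid, the finitely many classes of $\mathbf{q}$ can be normalised (say by pinning via a fixed gauge) so that the coordinates of $\mathbf{q}$ are algebraic over $\QQ(\text{edge lengths of }G)$. Moreover $\td[\QQ(\mathbf{q}):\QQ]$ equals $d|V|-\binom{d+1}{2}$, the generic value, so $\mathbf{q}$ is generic modulo isometries. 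Each degenerate condition (a Cayley--Menger-type determinant of the $w_i$ vanishing, or the affine span of $\mathbf{q}(V)$ being proper) is an isometry-invariant polynomial condition. These conditions fail at a generic point, so they fail at $\mathbf{q}$. For the discriminant, I use that the $d$ new lengths $\|\mathbf{p}'(x)-\mathbf{p}'(w_i)\|^2$ are algebraically independent over $\QQ(\mathbf{q})$, because $\mathbf{p}'(x)$ is generic over $\QQ(\mathbf{p})$. The discriminant is a nonzero polynomial in these lengths, so it cannot vanish.
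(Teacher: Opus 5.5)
The paper gives no proof of this lemma; it imports it from \cite[Lemma 7.1]{dewar23}, so there is no in-paper argument to compare against. Your direct fibre count is a natural self-contained proof, and its structure is right. You restrict to $G$, show that each of the $c_d(G)$ classes has exactly two extensions, and show that two distinct extensions of a fixed $\mathbf{q}$ are non-congruent because an isometry fixing an affinely spanning set is the identity. That last point is exactly where $|V|\ge d+1$ enters: for $G=K_d$ the reflection in $H$ identifies the two extensions. Compared with the citation, your route makes explicit which nondegeneracy facts about \emph{all} complex equivalent realisations are needed.

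The step that does not work as written is the normalisation ``by pinning via a fixed gauge''. Over $\CC$ the usual triangular gauge need not exist: a nonzero $u$ with $\|u\|^2=0$ cannot be moved onto a coordinate axis by $O(d,\CC)$. Whether some equivalent $\mathbf{q}$ has isotropic difference vectors is precisely what you are trying to exclude, so the argument is circular at that point. The later claim that the new lengths are independent over $\QQ(\mathbf{q})$ also depends on this representative.

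You can avoid representatives entirely by working with the squared-distance vector $m(\mathbf{q})=(\|\mathbf{q}(u)-\mathbf{q}(u')\|^2)_{u,u'\in V}$.
\begin{itemize}
\item It is constant on congruence classes, so by $c_d(G)<\infty$ it takes only finitely many values on the variety of realisations equivalent to $\mathbf{p}$. That variety is defined over $K=\QQ(\|\mathbf{p}(u)-\mathbf{p}(u')\|^2 : uu'\in E)$, so every coordinate of $m(\mathbf{q})$ is algebraic over $K$.
\item Since $K\subseteq\QQ(m(\mathbf{q}))$ and $\td[K:\QQ]=d|V|-\binom{d+1}{2}$ equals the dimension of the irreducible $\QQ$-variety of squared-distance vectors of configurations in $\CC^d$, $m(\mathbf{q})$ is a generic point of that variety.
\item Consequently the Cayley--Menger determinant of $\mathbf{q}(w_1),\dots,\mathbf{q}(w_d)$ is nonzero, which gives affine independence and a non-isotropic $v$. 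Likewise the Gram matrix of the vectors $\mathbf{q}(u)-\mathbf{q}(u_0)$ has rank $d$, which forces full affine span.
\end{itemize}

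For the discriminant, write $\lambda_i=\|\mathbf{p}'(x)-\mathbf{p}'(w_i)\|^2$. Once $v$ is non-isotropic, a double root occurs iff $z\in H$, i.e.\ iff the Cayley--Menger determinant of $z,\mathbf{q}(w_1),\dots,\mathbf{q}(w_d)$ vanishes. This determinant is a polynomial in $\lambda_1,\dots,\lambda_d$ with coefficients in $\overline{K}$. It is not identically zero, since points off $H$ exist. The $\lambda_i$ are algebraically independent over $\overline{\QQ(\mathbf{p})}\supseteq\overline{K}$, because $z\mapsto(\|z-\mathbf{p}(w_i)\|^2)_i$ is dominant and $\mathbf{p}'(x)$ is generic over $\QQ(\mathbf{p})$. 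Hence the determinant is nonzero. With this replacement for the gauge step, your proof is complete.
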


\begin{proposition}[{\cite[Proposition 4.5]{dewarsmithnixon2026}}]\label{prop:0extreal}
    Let $G = (V,E)$ be a $d$-rigid graph with at least $d+1$ vertices.
    If $G'$ is obtained from $G$ via a 0-extension, then $r_d(G') = 2r_d(G)$.
\end{proposition}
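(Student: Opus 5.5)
The plan is to prove the two inequalities $r_d(G') \leq 2r_d(G)$ and $r_d(G') \geq 2r_d(G)$ separately. Throughout, $x$ is the new vertex, $w_1,\ldots,w_d$ are its neighbours, and $H_{\mathbf{q}}$ denotes the affine hyperplane spanned by $\mathbf{q}(w_1),\ldots,\mathbf{q}(w_d)$.

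\emph{Upper bound.} Fix a generic realisation $\mathbf{p}'$ of $G'$ in $\RR^d$ and let $\mathbf{p}=\mathbf{p}'|_V$, which is generic for $G$.
\begin{itemize}
\item Restriction to $V$ sends each real realisation $\mathbf{q}'$ of $G'$ equivalent to $\mathbf{p}'$ to a realisation $\mathbf{q}'|_V$ of $G$ equivalent to $\mathbf{p}$. It also sends congruent realisations to congruent ones.
\item Since $G$ is $d$-rigid, $\mathbf{p}$ is generic and $|V|\geq d+1$, every $\mathbf{q}$ equivalent to $\mathbf{p}$ has affinely spanning image. I would justify this by noting that the equivalence class of $\mathbf{p}$ is finite modulo isometries, so its members are algebraic over $\QQ(\|\mathbf{p}(v)-\mathbf{p}(w)\|^2 : vw\in E)$. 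Hence the coordinates of $\mathbf{q}$ (after fixing the isometry appropriately) satisfy no affine-degeneracy polynomial that $\mathbf{p}$ avoids. The points $\mathbf{q}(w_1),\ldots,\mathbf{q}(w_d)$ are affinely independent for the same reason.
\item The affine span gives that the only isometry fixing $\mathbf{q}$ pointwise is the identity. So, once a representative $\mathbf{q}$ of a class of $G$ is fixed, the classes of $G'$ above it correspond to the admissible positions of $x$.
\item These positions are the real points of the intersection of $d$ spheres centred at the affinely independent points $\mathbf{q}(w_j)$. That intersection is contained in a line orthogonal to $H_{\mathbf{q}}$ and meets a sphere, so it has at most $2$ points.
\end{itemize}
Thus $r_d(G',\mathbf{p}')\leq 2r_d(G,\mathbf{p})\leq 2r_d(G)$.

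\emph{Lower bound.} Let $N=r_d(G)$ and choose a generic $\mathbf{p}$ attaining $r_d(G,\mathbf{p})=N$, with pairwise non-congruent equivalent realisations $\mathbf{q}_1=\mathbf{p},\mathbf{q}_2,\ldots,\mathbf{q}_N$.
\begin{itemize}
\item Each $\mathbf{q}_i$ is a regular point of the rigidity map, because it is infinitesimally rigid by the genericity argument above. By the implicit function theorem, for $\tilde{\mathbf{p}}$ in a small neighbourhood $U$ of $\mathbf{p}$ the $N$ realisations deform continuously and stay pairwise non-congruent. So $r_d(G,\tilde{\mathbf{p}})\geq N$ on $U$, hence $=N$ for generic $\tilde{\mathbf{p}}\in U$.
\item Next I place $x$ very far from $\mathbf{p}(w_1)$ along a direction $u$ that is nearly the unit normal of $H_{\mathbf{p}}$. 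Then the squared lengths $\ell_j=\|\mathbf{p}(x)-\mathbf{p}(w_j)\|^2$ are huge and nearly equal to one another.
\item For each $i$ I then need a real point $y$ with $\|y-\mathbf{q}_i(w_j)\|^2=\ell_j$ for all $j$. Subtracting the equations pairwise gives $d-1$ linear conditions fixing the orthogonal projection of $y$ onto $H_{\mathbf{q}_i}$. Because the $\ell_j$ are nearly equal, this projection lies near the circumcentre of $\mathbf{q}_i(w_1),\ldots,\mathbf{q}_i(w_d)$, so it is at bounded distance from those points.
\item The remaining equation then asks for a height $h$ above $H_{\mathbf{q}_i}$ with $h^2$ equal to $\ell_1$ minus a bounded quantity. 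This is positive for $\ell_1$ large, giving two distinct real solutions $y=\mathbf{q}_i'(x)$, mirror images in $H_{\mathbf{q}_i}$. These $2N$ extended realisations are pairwise non-congruent: extensions of different $\mathbf{q}_i$ restrict to non-congruent realisations, and the two extensions of one $\mathbf{q}_i$ differ since the isometry fixing $\mathbf{q}_i$ is trivial.
\item All these conditions (the regular deformations on $U$, real solutions with $h^2>0$, distinctness) are open. So a small generic perturbation $\mathbf{p}'$ of $(\mathbf{p},\mathbf{p}(x))$ still has at least $2N$ real equivalent classes, giving $r_d(G')\geq 2N$.
\end{itemize}

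The main obstacle is the lower bound. The choice of $\mathbf{p}(x)$ must make the sphere intersections real simultaneously for \emph{all} $N$ realisations $\mathbf{q}_i$, whose hyperplanes $H_{\mathbf{q}_i}$ and inter-neighbour distances differ because the $w_jw_k$ need not be edges. The ``far away, nearly normal'' choice is what I would try first, since it makes the required heights $h$ dominated by the common term $\ell_1$ regardless of $i$. The other delicate point is the openness and genericity bookkeeping: that the $N$ real classes persist over an open set and that $\mathbf{p}'$ can be taken generic without losing any of the $2N$ real solutions.
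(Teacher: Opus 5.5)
The paper does not prove this proposition. It imports it from \cite[Proposition 4.5]{dewarsmithnixon2026}, just as the complex version \Cref{lem:0ext} is imported from \cite{dewar23}. Your proposal therefore has to stand on its own, and it does: it is a correct, elementary, self-contained proof.

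The upper bound is the routine half. The substance is the lower bound, and placing $\mathbf p(x)=\mathbf p(w_1)+Ln$, with $n$ the unit normal of $H_{\mathbf p}$, is the right device. Then $\ell_j-\ell_1=\|\mathbf p(w_j)-\mathbf p(w_1)\|^2$ does not depend on $L$, so each foot point $c_i\in H_{\mathbf q_i}$ is independent of $L$. Hence $h_i^2=L^2-\|c_i-\mathbf q_i(w_1)\|^2>0$ for all $N$ classes simultaneously once $L$ is large. (Take $n$ exactly normal and let the final perturbation supply genericity. A tilt by angle $\theta$ adds a term of order $L\theta$ to $\ell_j-\ell_1$, and your bounded-distance claim then needs $\theta=O(1/L)$.)

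Compared with the bare citation, your route shows where each hypothesis is used. Rigidity gives finiteness and general position of every equivalent realisation. The condition $|V|\ge d+1$ gives the trivial stabiliser that keeps the two mirror extensions non-congruent; for $G=K_d$ the doubling indeed fails.

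Two steps need more than the one line you give them.

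First, persistence of the $N$ classes near $\mathbf p$. For minimally rigid $G$ this is the inverse function theorem on a slice transverse to the isometry orbits. In general, however, the edge-length map $f_G$ has rank $r=d|V|-\binom{d+1}{2}<|E|$, and the implicit function theorem alone does not show that $f_G(\tilde{\mathbf p})$ is attained near $\mathbf q_i$. You also need that the constant-rank images of small neighbourhoods of $\mathbf p$ and of each $\mathbf q_i$ (all $r$-dimensional submanifolds through $f_G(\mathbf p)$) coincide near $f_G(\mathbf p)$. This holds because $f_G(\mathbf p)$ is a generic, hence smooth, point of the irreducible $r$-dimensional Zariski closure of the image of $f_G$. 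This case matters here: the paper applies the proposition to graphs containing large $(d+1)$-cores, which are far from minimally rigid.

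Second, general position. Your justification via ``members are algebraic over the edge-length field'' presupposes finitely many normalised representatives. That is precisely what a degenerate $\mathbf q$ would violate, so the argument is circular on the very case you want to exclude. Argue instead with a normal form $\hat{\mathbf q}$, obtained by a real isometry, in which $\hat{\mathbf q}(u_k)$ has its last $d-k+1$ coordinates zero for $k\le d$ (for some ordering $u_1,\dots,u_{|V|}$ of $V$). Then $\td[\QQ(\hat{\mathbf q}):\QQ]\ge\td[\QQ(f_G(\mathbf p)):\QQ]=r$, which equals the number of free coordinates, so these are algebraically independent. This gives affine spanning, independence of the $\mathbf q(w_j)$ and infinitesimal rigidity at once, since each is the non-vanishing of a rational polynomial that is nonzero at the normal form of $\mathbf p$.
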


\subsection{Weakly globally linked pairs}

A vertex pair $\{v,w\}$ of a graph $G$ is said to be \emph{weakly globally $d$-linked} if there exists a generic realisation $\mathbf{p}$ of $G$ in $\mathbb{R}^d$ where $\|\mathbf{p}(v) - \mathbf{p}(w)\|=\|\mathbf{q}(v) - \mathbf{q}(w)\|$ holds for each equivalent realisation $\mathbf{q}$ of $G$ in $\mathbb{R}^d$.
We connect weakly globally $d$-linked vertex pairs to the generic property of global $d$-rigidity using the following lemma.

\begin{lemma}[{\cite[Lemma 3.2(a)]{Jordán2024}}]\label{lem:jv}
    Let $G$ be a graph and let
    \begin{equation*}
        J_d(G) := \Big\{ vw : \{v,w\} \text{ is weakly globally $d$-linked and non-adjacent in } G  \Big\}.
    \end{equation*}
    If $G+J_d(G)$ is globally $d$-rigid,
    then $G$ is globally $d$-rigid.
\end{lemma}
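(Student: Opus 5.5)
The plan is to avoid finding one generic realisation at which \emph{all} pairs of $J_d(G)$ are simultaneously globally linked. That would need an intersection argument over possibly disjoint open sets of realisations, and I do not see how to make it work. Instead I would add the edges of $J_d(G)$ one at a time and use two facts: global $d$-rigidity is a generic property, and weak global linkedness is monotone under adding edges.

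First, the one-edge case. Suppose $\{v,w\}$ is weakly globally $d$-linked and non-adjacent in a graph $G'$, and $G'+vw$ is globally $d$-rigid. By definition there is a generic realisation $\mathbf{p}$ of $G'$ in $\RR^d$ such that every realisation $\mathbf{q}$ in $\RR^d$ equivalent to $(G',\mathbf{p})$ satisfies $\|\mathbf{q}(v)-\mathbf{q}(w)\|=\|\mathbf{p}(v)-\mathbf{p}(w)\|$. Every such $\mathbf{q}$ is therefore also equivalent to $\mathbf{p}$ as a realisation of $G'+vw$. Global rigidity is a generic property (Gortler, Healy and Thurston), so $r_d(G'+vw,\mathbf{p})=1$ at this particular generic $\mathbf{p}$. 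Hence $\mathbf{q}$ is congruent to $\mathbf{p}$, which gives $r_d(G',\mathbf{p})=1$ with $\mathbf{p}$ generic, so $G'$ is globally $d$-rigid.

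Second, monotonicity. If $\{v,w\}$ is weakly globally $d$-linked in $G$ and $G\subseteq G''$ on the same vertex set, then it is weakly globally $d$-linked in $G''$, witnessed by the same generic $\mathbf{p}$. This holds because every realisation equivalent to $(G'',\mathbf{p})$ is in particular equivalent to $(G,\mathbf{p})$, so it preserves the distance between $\mathbf{p}(v)$ and $\mathbf{p}(w)$.

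Now enumerate $J_d(G)=\{e_1,\ldots,e_k\}$ and set $G_i:=G+\{e_1,\ldots,e_i\}$, so that $G_0=G$ and $G_k=G+J_d(G)$. For each $i$, the pair $e_i$ is weakly globally $d$-linked in $G\subseteq G_{i-1}$, so by monotonicity it is weakly globally $d$-linked in $G_{i-1}$. It is also non-adjacent there, and $G_i=G_{i-1}+e_i$. Applying the one-edge case for $i=k,k-1,\ldots,1$ gives, by downward induction, that every $G_i$ is globally $d$-rigid, and in particular $G=G_0$ is.

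The only genuinely delicate point is the step where genericity of global rigidity is used. The definition of weak global linkedness provides only \emph{some} generic $\mathbf{p}$, so we need $G'+vw$ to be globally rigid at that specific $\mathbf{p}$ and not merely at some generic realisation. This is exactly what the Gortler–Healy–Thurston theorem supplies, and everything else is formal.
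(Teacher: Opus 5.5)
Your proof is correct: the one-edge step uses Gortler--Healy--Thurston to make $G'+vw$ globally rigid at the particular generic witness $\mathbf{p}$ of weak global linkedness, monotonicity keeps each pair of $J_d(G)$ weakly globally linked in every intermediate graph, and the downward induction avoids needing a single realisation that witnesses all pairs at once. The paper itself gives no proof here and simply cites Jord\'an and Vill\'anyi; your argument is the standard self-contained proof of that cited lemma.
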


Let $G=(V,E)$ be a graph with vertex subset $X \subset V$.
Let $V_1,\ldots, V_m \subset V$ denote the vertex sets of the connected components of $G-X$. Then define $\cliq (G,X)$ to be the graph formed from $G$ by deleting the vertex sets $V_i$ for $1 \leq i \leq m$, and adding an edge $vw$ for all non-adjacent vertex pairs $v,w$ where both $v$ and $w$ both have a neighbour in the same component $V_i$.

\begin{theorem}[{\cite[Theorem 4.5]{Jordán2024}}]\label{thm:jv}
    Let $G=(V,E)$ be a $(d+1)$-connected graph with non-adjacent vertices $x,y$,
    and let $V_0 \subset V$ be a vertex set containing both $x,y$.
    If $G[V_0]$ is $d$-rigid and $\cliq (G,V_0)$ is globally $d$-rigid,
    then $\{x,y\}$ is weakly globally $d$-linked in $G$.
\end{theorem}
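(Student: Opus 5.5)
The plan is to work with a suitably chosen generic realisation $\mathbf{p}$ of $G$ in $\RR^d$ and to reduce, component by component, to the global rigidity of $\cliq(G,V_0)$. Let $V_1,\ldots,V_m$ be the vertex sets of the components of $G-V_0$, and let $N_i \subseteq V_0$ be the set of vertices of $V_0$ with a neighbour in $V_i$. Since $G$ is $(d+1)$-connected, each $N_i$ has at least $d+1$ vertices and each $G[V_i]$ is connected. The target is the following: for a suitable generic $\mathbf{p}$ and any equivalent $\mathbf{q}$, the restriction $\mathbf{q}|_{V_0}$ is congruent to $\mathbf{p}|_{V_0}$. This in particular gives $\|\mathbf{q}(x)-\mathbf{q}(y)\|=\|\mathbf{p}(x)-\mathbf{p}(y)\|$.

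\textbf{Step 1 (contraction).} Contract each connected set $V_i$ to a single new vertex $z_i$ adjacent to all of $N_i$, obtaining a graph $G^*$. I would choose $\mathbf{p}$ close to a special realisation in which each $V_i$ is collapsed to a generic point $\mathbf{p}(z_i)$. By a limiting argument, realisations of $G$ equivalent to $\mathbf{p}$ then have $V_i$ nearly collapsed. More precisely, I would study the closure of the fibre of the edge-measurement map and use the rigidity of $G[V_0]$ together with the connectivity of $V_i$. The aim is to transfer the statement to $G^*$: it suffices to show that $\{x,y\}$ is weakly globally linked in $G^*$ with the contracted vertices $z_i$.

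\textbf{Step 2 (cone vertices and cliques).} In $G^*$, each $z_i$ is a degree-$|N_i|\ge d+1$ vertex attached to $V_0$. Since $G[V_0]$ is $d$-rigid, any equivalent realisation $\mathbf{q}$ of $G^*$ has only finitely many choices for $\mathbf{q}|_{V_0}$, and $z_i$ is then determined by $|N_i|\ge d+1$ distances. I would use the standard fact that adding a vertex of degree at least $d+1$ to a rigid graph allows one to add the clique on its neighbourhood without changing global linkedness. This step is where Hendrickson-type arguments and genericity are needed: generically, $d+1$ spheres centred at the points of $N_i$ meet in a point only if the configuration $\mathbf{q}(N_i)$ is congruent to $\mathbf{p}(N_i)$, up to the finitely many reflections forced by rigidity. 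With this fact, every pair in $N_i$ becomes weakly globally linked. Hence $\mathbf{q}|_{V_0}$ is an equivalent realisation of $\cliq(G,V_0)$.

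\textbf{Step 3 (conclusion).} Since $\cliq(G,V_0)$ is globally $d$-rigid and $\mathbf{p}|_{V_0}$ is generic, $\mathbf{q}|_{V_0}$ is congruent to $\mathbf{p}|_{V_0}$. This gives the equality of the $x$–$y$ distance, so $\{x,y\}$ is weakly globally $d$-linked.

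The main obstacle is Step 1. Weak global linkedness only asks for one generic realisation, so the limit argument has to be set up correctly. We must pass from a nongeneric collapsed configuration to nearby generic ones without new far-away equivalent realisations appearing. I would try first to argue that, by rigidity of $G[V_0]$ and compactness of the relevant fibres modulo isometries, the number of distinct values of $\|\mathbf{q}(x)-\mathbf{q}(y)\|$ is lower semicontinuous under the degeneration. If that fails, I would instead attempt an induction on $\sum_i |V_i|$ that contracts one edge inside some $V_i$ at a time.
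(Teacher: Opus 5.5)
The paper does not prove this statement; it imports it from Jord\'an--Vill\'anyi, so I am judging your argument on its own terms. Your overall plan is sound: contract each $V_i$ to a cone vertex $z_i$ on $N_i$, show the contracted graph $G^*$ is globally $d$-rigid, then pull back to $G$. Steps 2--3 are also fine. They amount to proving $G^*$ globally $d$-rigid by the Tanigawa-type cone-vertex argument behind \Cref{lem:d+1dim0ext}, inducting on the number of cone vertices. One wording fix: genericity of $\mathbf{p}(z_i)$ over the field generated by $\mathbf{p}|_{V_0}$ and the finitely many equivalent realisations of $G[V_0]$ forces $\mathbf{q}|_{N_i}$ to be congruent to $\mathbf{p}|_{N_i}$ outright. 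There are no residual ``reflections''.

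The genuine gap is Step 1, which you leave open, and your diagnosis of the obstacle is off. Far-away equivalent realisations are not the problem. Since $G$ is connected and edge lengths are bounded, equivalent realisations are compact modulo isometries. Connectivity of $G[V_i]$ then forces $\mathbf{q}(V_i)$ to be nearly collapsed. The real danger is different: bad realisations $\mathbf{q}_n$ of $(G,\mathbf{p}_n)$ could have $\|\mathbf{q}_n(x)-\mathbf{q}_n(y)\|\neq\|\mathbf{p}_n(x)-\mathbf{p}_n(y)\|$ with the difference tending to $0$. Their limit is then a perfectly good realisation of $G^*$, and no contradiction arises. Also, ``lower semicontinuity'' of the number of distinct distance values is the wrong direction. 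What you need is that this number cannot jump up near the collapsed configuration.

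The missing idea is not to perturb $V_0$ at all.
\begin{itemize}
\item Fix a generic $\mathbf{p}^*$ on $G^*$. Let $\mathbf{p}_n$ agree with $\mathbf{p}^*$ on $V_0$, with $\mathbf{p}_n|_{V_i}$ generic over $\QQ(\mathbf{p}^*|_{V_0})$ and within $1/n$ of $\mathbf{p}^*(z_i)$.
\item Any realisation equivalent to $\mathbf{p}_n$ restricts on $V_0$ to a realisation of $G[V_0]$ equivalent to $\mathbf{p}^*|_{V_0}$. By rigidity of $G[V_0]$, these fall into finitely many congruence classes $r_1,\ldots,r_m$ that do not depend on $n$.
\item Hence the possible $x$--$y$ distances form a fixed finite set, and any wrong value is bounded away from the right one.
\item Suppose each $\mathbf{p}_n$ has a bad $\mathbf{q}_n$. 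Normalise $\mathbf{q}_n|_{V_0}=r_j$ with $j$ fixed along a subsequence. Then $\mathbf{q}_n(V_i)$ is bounded and shrinks to a point.
\item The limit is a realisation of $G^*$ equivalent to $\mathbf{p}^*$ whose restriction to $V_0$ is $r_j$, which is not congruent to $\mathbf{p}^*|_{V_0}$. This contradicts Steps 2--3.
\end{itemize}
So some $\mathbf{p}_n$ witnesses weak global linkedness. Your fallback of contracting one edge at a time does not avoid the issue. It would need a vertex-splitting lemma for weak global linkedness, whose proof is this same limiting argument.
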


\section{Proof of \texorpdfstring{\Cref{thm:random}}{main theorem}}
\label{sec:random}

Here we denote the $k$-core of a graph $G$ by $G(k)$, and we denote its vertex set by $V(k)$.

Our proof of \Cref{thm:random} proceeds as follows:
\begin{enumerate}
    \item First, in \Cref{lem:weirdlem} we show that every graph $G$ with the $d$-neighbourly property (defined in \Cref{subsec:neighbourly}) and a large $d(d+1)$-core has a vertex ordering $v_1,\ldots,v_n$ such that
    $G[\{v_1,\ldots,v_s\}]= G(d(d+1))$,
    and the remaining vertices $v_{s+1},\ldots,v_n$ are added one-by-one by connecting the vertex to at least $d$ previous vertices.
    Furthermore, our ordering will ensure that there exists $t \geq s$ such that $G[\{v_1,\ldots,v_t\}]$ is the $(d+1)$-core of $G$,
    and the vertices $v_{t+1}, \ldots,v_n$ are added by $d$-dimensional 0-extensions.
    \item Next, in \Cref{lem:0extplusedgesisgr} we show that any graph formed from a globally $d$-rigid graph by performing a sequence of $d$-dimensional 0-extensions and then adding a set of edges so that the resulting graph is $(d+1)$-connected is globally $d$-rigid.
    \item Finally, we prove that if $\omega \in B_n$ is chosen uniformly at random then a.a.s.~$G(n,M,\omega)$ satisfies the hypotheses for \Cref{lem:weirdlem} whenever $M \geq M_d(\omega)$.
    We then combine these results with a recent theorem of Vill\'{a}nyi (\Cref{thm:soma}) to prove \Cref{thm:random}.
\end{enumerate}

\subsection{The \texorpdfstring{$d$}{d}-neighbourly property}\label{subsec:neighbourly}

We say that a graph $G=(V,E)$ has the \emph{$d$-neighbourly property} if every subset $B \subset V$ with $1 \leq |B| \leq |V|/2$ contains a vertex with at least $d$ neighbours in $V\setminus B$.

When combined with a large $d(d+1)$-core, the $d$-neighbourly property guarantees that the graph can be constructed in a specific way that will be useful.

\begin{lemma}\label{lem:weirdlem}
Let $d\geq 1$ and set $k=d(d+1)$.
    Suppose that $G$ is an $n$-vertex graph with the $d$-neighbourly property and a $k$-core with at least $5n/9$ vertices.
    Then there exists an ordering $v_1,\ldots,v_n$ of the vertices of $G$ and integers $s \leq t\leq n$ such that the following hold:
    \begin{enumerate}
        \item \label{lem:weirdlem1} $v_1,\ldots,v_s$ are the vertices of $G(k)$.
        \item \label{lem:weirdlem2} $v_{s+1}, \ldots, v_t$ are exactly the vertices contained in $G(d+1)$ but not $G(k)$.
        \item \label{lem:weirdlem3} For $s+1 \leq i \leq n$, the vertex $v_i$ is adjacent to at least $d$ vertices in the set $\{v_1,\ldots,v_{i-1}\}$.
    \end{enumerate}
\end{lemma}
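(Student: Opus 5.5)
The plan is to build the ordering greedily, starting from the $k$-core. The $d$-neighbourly property supplies each next vertex. The difficulty is arranging that all vertices of $G(d+1)$ come before those outside it.

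\textbf{Step 1 (the basic greedy step).} Write $T=V(d+1)$ and note $V(k)\subseteq T$ since $k\geq d+1$. Set $S_0=V(k)$ and list its vertices as $v_1,\dots,v_s$ in any order, so item \ref{lem:weirdlem1} holds. Suppose $S\supseteq S_0$ is the current set of placed vertices and $S\neq V$. Then $B=V\setminus S$ satisfies
\[
1\leq |B|\leq n-5n/9=4n/9\leq n/2,
\]
so the $d$-neighbourly property gives a vertex of $B$ with at least $d$ neighbours in $S$. Hence the greedy process (append any unplaced vertex with at least $d$ placed neighbours) never gets stuck before $S=V$. This already gives item \ref{lem:weirdlem3}, but not item \ref{lem:weirdlem2}.

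\textbf{Step 2 (filling $T$ first).} Run the greedy process restricted to $T$: starting from $S_0$, keep appending vertices of $T\setminus S$ that have at least $d$ neighbours in $S$. Let $S^*\subseteq T$ be the set at which this stalls. If $S^*=T$, set $t=|T|$ and finish $V\setminus T$ by Step 1, since Step 1 works from any $S\supseteq S_0$. This gives items \ref{lem:weirdlem1}--\ref{lem:weirdlem3}.

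\textbf{Step 3 (showing $S^*=T$; the main obstacle).} Suppose $Z=T\setminus S^*\neq\emptyset$. Every $z\in Z$ has at most $d-1$ neighbours in $S^*$ but at least $d+1$ neighbours in $T$, so it has at least $2$ neighbours in $Z$.
\begin{itemize}
\item Continue the unrestricted greedy process from $S^*$, which Step 1 permits. It must eventually place some vertex of $Z$; let $z$ be the first one placed.
\item At that moment $z$ has at least $d$ placed neighbours, at most $d-1$ of which lie in $S^*$. So $z$ has a placed neighbour in the set $A$ of vertices outside $T$ placed before $z$.
\item I aim to derive a contradiction with the maximality of the $(d+1)$-core. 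Each $a\in A$ has at least $d$ neighbours placed earlier, inside $S^*\cup A$. If $a$ also has a neighbour in $Z\cup\{\text{later vertices of }A\}$, then $a$ has degree at least $d+1$ in $T\cup A$.
\item Take $A'\subseteq A$ to be the set of vertices of $A$ from which $z$ is reachable by a path that moves forward in the greedy order, i.e.\ a "descendant closure" of $z$ within $A$. Then every vertex of $A'$ has an earlier-placed set of at least $d$ neighbours. I would also need every vertex of $A'$ to have at least one further neighbour in $A'\cup Z$.
\item If that holds, every vertex of $T\cup A'$ has degree at least $d+1$ in $G[T\cup A']$. This contradicts $A'\cap T=\emptyset$, $A'\neq\emptyset$ and the maximality of $G(d+1)$.
\end{itemize}

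The delicate point is that the earlier neighbours of a vertex of $A'$ may lie in $A\setminus A'$. So I would define $A'$ as the smallest set containing the relevant neighbour of $z$ and closed under "all earlier neighbours lying in $A$". Every vertex of $A'$ then keeps its at least $d$ earlier neighbours inside $T\cup A'$. Its extra neighbour comes from the vertex whose inclusion forced it into $A'$, which is either $z$ or a later vertex of $A'$. The $\geq d+1$ count must be checked carefully for each vertex of $A'$.
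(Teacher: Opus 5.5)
Your proof is correct, and it takes a different route from the paper's.

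\textbf{The paper's route.} The paper fixes one greedy ordering $w_1,\dots,w_n$ that starts from $G(k)$. It orients each edge from its later endpoint to its earlier one. It then re-sorts the non-core vertices from the back, each time removing a source of smallest out-degree. The minimality of that choice forces the last $n-t$ removed vertices to have out-degree exactly $d$. So deleting them is a $d$-peeling that leaves $t$ vertices, and these must be $V(d+1)$.

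\textbf{Your route.} You grow the ordering forwards inside $T=V(d+1)$ and show by contradiction that this never stalls. The unrestricted continuation exists by the $5n/9$ bound, and it eventually places a first vertex $z\in Z$. This $z$ must have an already placed neighbour $a_0\notin T$. Take the backward closure $A'$ of $a_0$ under ``earlier neighbours lying in $A$''. Every $a\in A'$ then has at least $d$ earlier neighbours in $S^*\cup A'$. It also has one strictly later neighbour: $z$, or the vertex of $A'$ that pulled $a$ in. Hence $G[T\cup A']$ has minimum degree at least $d+1$, contradicting the maximality of the core.

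Your worry about the ``delicate point'' is unfounded. Your first definition of $A'$ (vertices with a forward path to $z$ through $A$) is already closed under earlier $A$-neighbours, because such a neighbour can be prepended to the path. So the count goes through exactly as you describe.

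\textbf{What each buys.} Your route avoids the orientation and tie-breaking machinery. It also shows slightly more: every greedy run inside $G(d+1)$ starting from $V(k)$ succeeds. The paper's route directly exhibits that $v_{t+1},\dots,v_n$ have exactly $d$ earlier neighbours. That is what the proof of \Cref{thm:random} uses to speak of genuine $0$-extensions.

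With your ordering this also follows from a one-line count. The vertices after position $t$ account for all $|E(G)|-|E(G[T])|$ edges outside $G[T]$, and each contributes at least $d$, giving at least $d(n-t)$. Peeling $G$ down to $G(d+1)$ deletes these same $n-t$ vertices at a cost of at most $d$ edges each. So every inequality is tight, and each vertex after position $t$ has exactly $d$ earlier neighbours.
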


\begin{proof}
    We first construct an ordering $w_1,\ldots,w_n$ of $[n]$ as follows.
    First we choose $w_1,\ldots, w_s$ to be the vertices of $G(k)$ with an arbitrary ordering.
    For $i \geq s+1$, we now inductively choose a vertex $w_i$ from $[n] \setminus \{w_1,\ldots,w_{i-1}\}$ with at least $d$ neighbours in $\{w_1,\ldots,w_{i-1}\}$ by using the $d$-neighbourly property with $B= [n] \setminus \{w_1,\ldots,w_{i-1}\}$.
    Let $\vec{G}$ be the acyclic directed graph formed from $G$ by setting $w_i \rightarrow w_j$ if $i > j$.
    By our choice of ordering, we have that every vertex outside of $G(k)$ has out-degree at least $d$.
    We now form the ordering $v_1,\ldots,v_n$ as follows.
    \begin{enumerate}[label=(\arabic*)]
        \item For each $i \leq s$,
        set $v_i = w_i$.
        \item Starting at $i=n$ and proceeding all the way down to $i=s+1$,
        we choose $v_i$ to be the vertex not contained in the set $\{v_1,\ldots,v_s\}$ with in-degree 0 and the smallest out-degree in the directed graph $\vec{G}_i := \vec{G} - \{v_{i+1},\ldots, v_n\}$; in the case of a tie, just choose one vertex arbitrarily.
        (The existence of vertices of in-degree 0 is guaranteed by choice of construction.)
    \end{enumerate}
    Consider the sequence of directed graphs $\vec{G}_n, \vec{G}_{n-1}, \ldots, \vec{G}_{s+1}$.
    The directed graph $\vec{G}_i$ is obtained from $\vec{G}_{i+1}$ by deleting the vertex $v_{i+1}$.
    Since $v_{i+1}$ has in-degree 0 in $\vec{G}_{i+1}$,
    it follows that the out-degree of every vertex in $\vec{G}_i$ is the same as its out-degree in $\vec{G}_{i+1}$.
    It thus follows that the out-degree of any vertex in $\vec{G}_i$ is the same as its out-degree in $\vec{G}$ for any $s+1 \leq i \leq n$.
    Since each of the vertices $v_{s+1}, \ldots, v_n$ has out-degree at least $d$ in $\vec{G}$,
    our sequence $v_1,\ldots, v_n$ satisfies conditions \ref{lem:weirdlem1} and \ref{lem:weirdlem3}.

    Fix $t$ to be the size of the $(d+1)$-core of $G$.
    If $t=n$ there is nothing left to prove, so we may suppose that $t < n$.
    It now suffices to show that each vertex $v_{t+1}, \ldots, v_n$ has out-degree $d$ in $\vec{G}$;
    indeed, if true this would imply that $G - \{v_{t+1}, \ldots, v_n\} = G(d+1)$, and thus our choice of $v_1,\ldots,v_n$ satisfies \ref{lem:weirdlem2}.
    Suppose for the sake of contradiction that this is not true.
    Fix $t+1 \leq i \leq n$ to be the largest index for which $v_i$ has out-degree larger than $d$ but $v_j$ has out-degree $d$ for each $j >i$.
    Then the graph $H:= G - \{v_{i+1},\ldots,v_n\}$ is formed by deleting a sequence of vertices of degree $d$.
    Since $|V(H)| = i >t$,
    $H$ is not the $(d+1)$-core of $G$ but does contain the $(d+1)$-core of $G$ (since we have only deleted vertices of degree at most $d$),
    thus $H$ must have a vertex $v$ of degree at most $d$.
    Since each vertex $v_1, \ldots,v_s$ is contained in $G(k)$ (and thus must have degree at least $k$),
    there exists some $s+1 \leq j \leq i$ so that $v=v_j$.
    Thus $v$ has out-degree at least $d$ in $\vec{G}$.
    Since $v$ has degree at most $d$ and out-degree at least $d$,
    $v$ has out-degree $d$ and in-degree 0.
    But this implies that the out-degree of $v$ is smaller than that of $v_i$,
    contradicting how we chose which of $w_{s+1}, \ldots,w_n$ was $v_i$ earlier.
    This contradiction completes the proof.
\end{proof}

\subsection{A specific type of globally rigid graph}

Our next step for proving \Cref{thm:random} is the following key lemma regarding global $d$-rigidity.

\begin{lemma}\label{lem:0extplusedgesisgr}
    Let $G'$ be formed from a globally $d$-rigid graph by a sequence of 0-extensions, and let $G$ contain $G'$ as a spanning subgraph.
    If $G$ is $(d+1)$-connected, then $G$ is globally $d$-rigid.
\end{lemma}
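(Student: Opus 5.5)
The plan is to combine \Cref{lem:jv} with \Cref{thm:jv}, arguing by induction on the number of 0-extensions. Let $G_0$ be the globally $d$-rigid graph and $G_0 = H_0, H_1, \ldots, H_m = G'$ the 0-extension sequence, where $H_j$ is obtained from $H_{j-1}$ by adding a vertex $x_j$ joined to $d$ vertices. Write $U_j := V(H_j)$. I will prove that for each $j$ the graph $G + K(U_j)$ is globally $d$-rigid, where $K(U_j)$ denotes all edges between vertices of $U_j$. For $j = m$ we have $U_m = V(G)$, so this says $K_n$ is globally $d$-rigid, which is trivial. I therefore run the induction downwards, from $j=m$ to $j=0$, and finish by removing the edges within $U_0$.

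For the downward step, assume $G_j := G + K(U_j)$ is globally $d$-rigid. Set $G_{j-1} := G + K(U_{j-1})$; then $G_j$ differs from $G_{j-1}$ only by the non-edges $x_j y$ with $y \in U_{j-1}$. By \Cref{lem:jv}, it suffices to show that every such pair $\{x_j, y\}$ is weakly globally $d$-linked in $G_{j-1}$. Adding those edges gives $G_{j-1}+J_d(G_{j-1}) \supseteq G_j$ on the same vertex set. Supergraphs of globally rigid graphs on the same vertex set are globally rigid, so $G_{j-1}$ is then globally $d$-rigid.

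To check weak global linkedness I apply \Cref{thm:jv} to $G_{j-1}$ with $V_0 := U_j = U_{j-1}\cup\{x_j\}$.
\begin{itemize}
\item $G_{j-1}$ is $(d+1)$-connected, since it contains $G$.
\item $G_{j-1}[U_j]$ contains the clique on $U_{j-1}$ together with the $d$ edges from $x_j$. This is a 0-extension of a complete graph, hence $d$-rigid. When $|U_{j-1}| \ge d$ this follows from 0-extensions preserving rigidity; the small cases, where $G_0$ is a tiny complete graph, need separate care.
\item $\cliq(G_{j-1},U_j)$ contains $K(U_{j-1})$ plus edges from $x_j$. The key claim is that it is a complete graph on $U_j$, or at least is globally rigid.
\end{itemize}
For the key claim, I use the structure of the components of $G - U_j$. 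In each such component $C$, every vertex outside $U_j$ attaches back, so $N(C) \subseteq U_j$. By $(d+1)$-connectivity, $|N(C)| \ge d+1$ whenever $U_j$ has a vertex outside $N(C)$. Hence if $x_j \in N(C)$, the clique operation joins $x_j$ to at least $d$ further vertices of $U_{j-1}$. The resulting graph is then $K(U_{j-1})$ plus $x_j$ of degree at least $d+1$, which is globally $d$-rigid: this is a clique with one vertex attached by at least $d+1$ edges, and such a graph is globally rigid by vertex addition.

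The main obstacle is the case where $x_j$ lies in no $N(C)$, or where $G - U_j$ is empty. In the empty case $U_j = V$, and then $x_j$ has degree at least $d+1$ in $G$ because $G$ is $(d+1)$-connected. So $x_j$ has at least $d+1$ neighbours in $U_{j-1}$ within $G_{j-1}$, and the same global rigidity argument applies. If $x_j \notin N(C)$ for every component $C$, then all neighbours of $x_j$ lie in $U_j$, and by $(d+1)$-connectivity the degree of $x_j$ is again at least $d+1$. So in every case $\cliq(G_{j-1},U_j)$ is $K(U_{j-1})$ plus a vertex of degree at least $d+1$, which is globally $d$-rigid.

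This completes the induction down to $j=0$: $G + K(U_0)$ is globally $d$-rigid. For the final step, every pair in $U_0$ is weakly globally $d$-linked in $G$, because $G \supseteq G_0$ and $G_0$ is globally $d$-rigid, so every pair of $U_0$ is in fact globally linked. Applying \Cref{lem:jv} once more shows that $G$ is globally $d$-rigid.
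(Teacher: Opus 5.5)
Your overall scheme is the paper's proof with the induction unrolled. The paper runs induction on the number of 0-extensions, replacing the base by the clique on the base plus the first added vertex. Your chain $K_n = G+K(U_m)\Rightarrow G+K(U_{m-1})\Rightarrow\cdots\Rightarrow G+K(U_0)\Rightarrow G$ performs the same steps: each applies \Cref{thm:jv} with $V_0$ equal to the current clique plus one added vertex, and then \Cref{lem:jv}. The reduction via \Cref{lem:jv}, the rigidity of $G_{j-1}[U_j]$, the two degenerate cases in your last paragraph, and the final step via global linkedness of pairs in $U_0$ are all fine.

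The gap is in the key claim when $x_j\in N(C)$ for some component $C$ of $G-U_j$. From $|N(C)|\ge d+1$ you only learn that the clique operation joins $x_j$ to the at least $d$ vertices of $N(C)\setminus\{x_j\}$. Nothing prevents these from being exactly the $d$ vertices $w_1,\dots,w_d$ to which $x_j$ was already attached by the 0-extension. For instance, $N(C)=\{x_j,w_1,\dots,w_d\}$ is a perfectly legitimate $(d+1)$-separator, while $x_j$'s extra connection runs through a different component. So ``degree at least $d+1$'' does not follow from one component, and \Cref{lem:d+1dim0ext} cannot yet be applied.

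What is missing is a use of $(d+1)$-connectivity aimed at $\{w_1,\dots,w_d\}$ itself, which is what the paper does. Since $y$ is not adjacent to $x_j$ and $G-\{w_1,\dots,w_d\}$ is connected, there is a path $Q$ from $x_j$ to $y$ avoiding every $w_i$. Let $z$ be the first vertex of $Q$ after $x_j$ lying in $U_j$; then $z\in U_{j-1}\setminus\{w_1,\dots,w_d\}$. Either $x_jz\in E(G)$, or the interior of $Q$ up to $z$ lies in a single component of $G-U_j=G_{j-1}-U_j$ adjacent to both $x_j$ and $z$. In both cases $x_jz$ is an edge of $\cliq(G_{j-1},U_j)$, giving the needed $(d+1)$-th neighbour. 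Equivalently, the whole neighbourhood of $x_j$ in $\cliq(G_{j-1},U_j)$ separates $x_j$ from $y$ in $G$, so it has size at least $d+1$. You must take all components adjacent to $x_j$ together, not one. With this repair your argument is complete.
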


To prove \Cref{lem:0extplusedgesisgr}, we first require the following folklore result.

\begin{lemma}\label{lem:d+1dim0ext}
    Let $H$ be globally $d$-rigid and let $G$ be formed from $H$ by adding a vertex of degree at least $d+1$.
    Then $G$ is globally $d$-rigid.
\end{lemma}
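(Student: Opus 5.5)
Let $v$ be the new vertex, with neighbours $w_1,\ldots,w_m$ in $H$, where $m\ge d+1$. Deleting edges never creates global rigidity, so it suffices to treat $m=d+1$. I would argue directly from the definition: fix a generic realisation $\mathbf{p}$ of $G$ in $\RR^d$ and an equivalent realisation $\mathbf{q}$ in $\RR^d$, and show that $\mathbf{q}$ is congruent to $\mathbf{p}$.

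First I handle the trivial cases. If $H$ has at most $d$ vertices it cannot have $d+1$ distinct neighbours for $v$, so $H$ has at least $d+1$ vertices. Next, restrict to $H$. Since $\mathbf{p}|_{V(H)}$ is generic and $H$ is globally $d$-rigid, and global rigidity is a generic property (Gortler--Healy--Thurston), the restriction $\mathbf{q}|_{V(H)}$ is congruent to $\mathbf{p}|_{V(H)}$. After composing $\mathbf{q}$ with an isometry of $\RR^d$, we may therefore assume $\mathbf{q}(u)=\mathbf{p}(u)$ for all $u\in V(H)$.

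It remains to show $\mathbf{q}(v)=\mathbf{p}(v)$. Both $x=\mathbf{p}(v)$ and $y=\mathbf{q}(v)$ satisfy $\|z-\mathbf{p}(w_i)\|^2=\ell_i$ for $i=1,\ldots,d+1$, where $\ell_i$ are the fixed edge lengths. Subtracting the equation for $i=1$ from the others gives the linear system
\[
2\big(\mathbf{p}(w_i)-\mathbf{p}(w_1)\big)\cdot z=\|\mathbf{p}(w_i)\|^2-\|\mathbf{p}(w_1)\|^2-\ell_i+\ell_1 ,\qquad i=2,\ldots,d+1.
\]
By genericity, the points $\mathbf{p}(w_1),\ldots,\mathbf{p}(w_{d+1})$ are affinely independent, so the $d$ vectors $\mathbf{p}(w_i)-\mathbf{p}(w_1)$ are linearly independent. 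The system therefore has a unique solution, which forces $y=x$. Hence $\mathbf{q}=\mathbf{p}$ after the isometry, and so $r_d(G,\mathbf{p})=1$ and $G$ is globally $d$-rigid.

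The only step needing care is the reduction on $H$: global rigidity is defined via some generic realisation, but we need the conclusion for the particular generic restriction $\mathbf{p}|_{V(H)}$. This is supplied by the generic-property result quoted in the preliminaries. The same argument works verbatim over $\CC$, using $c_d$ and the Gortler--Thurston result, if a complex version is ever needed.
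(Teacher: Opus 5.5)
Your proof is correct. The paper gives no argument of its own here: it calls the lemma folklore and obtains it as a direct consequence of \cite[Lemma 4.1]{tan15}.

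You instead supply the standard self-contained trilateration argument. First you pin down $H$ for the given generic $\mathbf{p}$, using that global rigidity is a generic property. Then you observe that the $d+1$ neighbours of $v$ are affinely independent, so the sphere equations reduce to a nonsingular $d\times d$ linear system and force $\mathbf{q}(v)=\mathbf{p}(v)$.

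The citation buys brevity. Your route makes the lemma depend only on facts already quoted in the preliminaries. As you note, the same computation also runs over $\CC$, with Gortler--Thurston in place of Gortler--Healy--Thurston, giving $c_d(G)=1$ directly.

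Two small simplifications are available:
\begin{enumerate}
    \item The reduction to degree exactly $d+1$ is unnecessary, since you can simply use any $d+1$ of the neighbours.
    \item The appeal to Gortler--Healy--Thurston can be dropped. The paper's definition of global $d$-rigidity only asks for one generic realisation, so you can take $\mathbf{p}|_{V(H)}$ to be a generic witness for $H$ and choose $\mathbf{p}(v)$ with coordinates algebraically independent over $\QQ(\mathbf{p}|_{V(H)})$.
\end{enumerate}
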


\begin{proof}
    This is a direct consequence of \cite[Lemma 4.1]{tan15}.
\end{proof}

Our approach to proving \Cref{lem:0extplusedgesisgr} is to use \Cref{thm:jv} to prove all pairs of vertices are weakly globally $d$-linked, then use \Cref{lem:jv} to achieve global rigidity.

\begin{proof}[Proof of \Cref{lem:0extplusedgesisgr}]
    Let $\mathcal{G}_k(K)$ be the family of all $(d+1)$-connected graphs $G$ containing a spanning subgraph $H$ formed in the following way:
    \begin{enumerate}
        \item Start with a globally rigid graph $K$ with at least $d+1$ vertices.
        \item Perform a sequence of exactly $k$ $d$-dimensional 0-extensions.
    \end{enumerate}
    We now refer to $H$ as a \emph{$k$-spine} of $G$.
    Our aim now is to prove that for each $k$ and globally $d$-rigid graph $K$, every graph in $\mathcal{G}_k(K)$ is globally $d$-rigid.
    We do so via induction on $k$.

    Choose any $G \in \mathcal{G}_1(K)$ with 1-spine $H$ formed by adding a single vertex $v_1$ adjacent to $d$ vertices in $K$.
    Since $H$ is not $(d+1)$-connected,
    $G$ contains at least one additional edge between $v_1$ and another vertex $u$ in $K$.
    By \Cref{lem:d+1dim0ext}, $H + v_1 u$ is globally $d$-rigid,
    and hence $G$ is globally $d$-rigid.

    Now suppose that for any choice of globally $d$-rigid graph $K$, every graph in $\mathcal{G}_k(K)$ is globally $d$-rigid.
    Choose any $G \in \mathcal{G}_{k+1}(K)$ with $(k+1)$-spine $H$.
    Specifically,
    let $H$ be the graph formed from $K$ by a sequence of $d$-dimensional 0-extensions which add vertices $v_0, v_1, \ldots, v_k$ in that order.

    \begin{claim}
        For any vertex $u$ in $K$ that is not adjacent to $v_0$ in $G$,
        the pair $\{v_0,u\}$ is weakly globally $d$-linked in $G$.
    \end{claim}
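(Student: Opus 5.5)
Fix a vertex $u$ of $K$ not adjacent to $v_0$ in $G$. The plan is to apply \Cref{thm:jv} to $G$, $x=v_0$, $y=u$, with $V_0 := V(K)\cup\{v_0\}$. The hypothesis that $G$ is $(d+1)$-connected is given, and $v_0,u$ are non-adjacent by assumption, so two things remain: $G[V_0]$ is $d$-rigid, and $\cliq(G,V_0)$ is globally $d$-rigid.

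For rigidity of $G[V_0]$, note that $G[V_0]$ contains $K+v_0$ with the $d$ edges of the first 0-extension as a spanning subgraph. Since $K$ is globally $d$-rigid, it is $d$-rigid, and a 0-extension preserves $d$-rigidity (\Cref{lem:0ext} gives finiteness of $c_d$). Adding further edges keeps it rigid.

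For global rigidity of $\cliq(G,V_0)$, I want to show it lies in $\mathcal{G}_k(K')$ for some globally $d$-rigid $K'$, or else handle it directly. Its vertex set is exactly $V_0$. Two cases arise.

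\begin{itemize}
\item If $v_0$ has degree at least $d+1$ in $\cliq(G,V_0)$, then $\cliq(G,V_0)$ contains $K$ plus a vertex of degree $\ge d+1$. This graph is globally $d$-rigid by \Cref{lem:d+1dim0ext}, and adding edges preserves this, so $\cliq(G,V_0)$ is globally $d$-rigid.
\item Otherwise $v_0$ has degree exactly $d$ in $\cliq(G,V_0)$. Here I use $(d+1)$-connectivity of $G$. Take a component $C$ of $G-V_0$; it is nonempty when $k\ge1$, since the vertices $v_1,\dots,v_k$ lie outside $V_0$. If $v_0$ has a neighbour in $C$, then $C$ has at least $d+1$ attachment vertices in $V_0$ by connectivity. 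Hence $v_0$ becomes adjacent in the clique-sum to at least $d$ vertices of $K$ via $C$, and $v_0$ is non-adjacent to $u$ only if $u$ is not among them. This gives the desired count.
\end{itemize}

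The main obstacle is this degree count for $v_0$ in $\cliq(G,V_0)$. I expect the argument to be: if $v_0$ had no neighbour outside $V_0$ and only the $d$ spine edges, then $v_0$ would have degree $d$ in $G$, contradicting $(d+1)$-connectivity. So $v_0$ has either an extra neighbour in $K$ or a neighbour in some component $C$. In the latter case the attachment set $N(C)\subseteq V_0$ has size at least $d+1$ and contains $v_0$, so it contributes at least $d$ neighbours of $v_0$ in $K$. To reach $d+1$ in total, I combine this with the original spine neighbours $w_1,\dots,w_d$. If $N(C)\setminus\{v_0\}=\{w_1,\dots,w_d\}$ exactly, then $\{w_1,\dots,w_d\}$ would separate $C\cup\{v_0\}$ from the rest of $K$, again contradicting $(d+1)$-connectivity. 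The exception is when $V(K)=\{w_1,\dots,w_d\}$, which is impossible since $|V(K)|\ge d+1$.

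Hence $v_0$ has at least $d+1$ neighbours in $\cliq(G,V_0)$, the first case applies, and \Cref{thm:jv} gives that $\{v_0,u\}$ is weakly globally $d$-linked in $G$.
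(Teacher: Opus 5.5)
Your strategy matches the paper's. You apply \Cref{thm:jv} with $V_0=V(K)\cup\{v_0\}$, get $d$-rigidity of $G[V_0]$ from the first 0-extension, and get global $d$-rigidity of $\cliq(G,V_0)$ from \Cref{lem:d+1dim0ext} once $v_0$ has at least $d+1$ neighbours there. The only substantive step is producing the $(d+1)$-st neighbour, and your separation step is wrong as written.

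You fix one component $C$ of $G-V_0$ containing a neighbour of $v_0$. You then claim that if $N(C)\setminus\{v_0\}=\{w_1,\dots,w_d\}$, the set $\{w_1,\dots,w_d\}$ separates $C\cup\{v_0\}$ from the rest of $K$. This fails when $v_0$ also has a neighbour in a second component $C'$ whose attachment set contains some $y\in V(K)\setminus\{w_1,\dots,w_d\}$: then $v_0$ reaches $y$ through $C'$, so nothing is separated. In that situation $C'$ supplies the extra neighbour, so the conclusion survives, but your inference does not.

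The repair is to take $X=\{v_0\}\cup\bigcup C$, the union over all components $C$ of $G-V_0$ adjacent to $v_0$. Suppose $v_0$ had no neighbour in $\cliq(G,V_0)$ outside $\{w_1,\dots,w_d\}$. Then every $G$-neighbour of $X$ outside $X$ lies in $\{w_1,\dots,w_d\}$. Since $V(K)\setminus\{w_1,\dots,w_d\}\neq\emptyset$, this gives a separating set of size $d$, contradicting $(d+1)$-connectivity.

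The paper phrases the same idea with paths. Since $G-\{w_1,\dots,w_d\}$ is connected and $u\notin\{w_1,\dots,w_d\}$, there is a path $Q$ from $v_0$ to $u$ avoiding the $w_i$. Let $z$ be the first vertex of $Q$ in $V(K)$. The portion of $Q$ before $z$ lies in a single component of $G-V_0$ (or is empty), so $v_0z$ is an edge of $\cliq(G,V_0)$ with $z\notin\{w_1,\dots,w_d\}$. Either way, your minimum-degree observation and the case split become unnecessary.

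Your second bullet (``Otherwise $v_0$ has degree exactly $d$ \ldots\ This gives the desired count'') is not an argument. Having $d$ neighbours in $K$ via $C$ is entirely consistent with $v_0$ having degree $d$. That bullet should be replaced by the contradiction above, which shows this case cannot occur.
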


    \begin{claimproof}
        Choose any vertex $u \in V(K)$ that is not adjacent to $v_0$ in $G$.
        Set $V_0 = V(K) \cup \{v_0\}$.
        By our choice of $V_0$,
        the induced subgraph $G[V_0]$ contains a $1$-spine as a spanning subgraph and hence is $d$-rigid.
        Since $G$ is $(d+1)$-connected, there exist $d+1$ internally
        disjoint paths between $v_0$ and $u$.
        Let $w_1,\ldots,w_d$ be the neighbours of $v_0$ in $H$ that are contained in $K$.
        Since $G$ is $(d+1)$-connected, 
        there exists a path $Q$ from $v_0$ to $u$ in $G$ that does not pass through $w_1,\ldots,w_d$.
        Let $z$ be the first vertex in $Q$ that is contained in $V(K)$ and let $P$ be the path formed by truncating $Q$ at $z$.
        It follows that the vertex $v_0$ is adjacent to at least $d+1$ vertices in the graph $\cliq(G,V_0)$.
        
        Let $K'$ be $\cliq (G,V_0)$ restricted to the vertices of $K$.
        As $K'$ contains $K$ as a spanning subgraph, $K'$ is globally $d$-rigid.
        Thus $\cliq (G,V_0)$ can be formed from $K'$ by  adding a vertex of degree $d+s$ for some $s \geq 1$, and hence the graph $\cliq(G,V_0)$ is globally $d$-rigid by \Cref{lem:d+1dim0ext}.
        The claim now follows from \Cref{thm:jv}.
    \end{claimproof}

     Now define the graphs
     \begin{align*}
         H' &:= H + \{ v_0 u : v_0 u \notin E(H), ~ u \in V(K) \} + \{ uv : uv \notin E(H), ~ u, v \in V(K)\}, \\
         G' &:= G + \{ v_0 u : v_0 u \notin E(G), ~ u \in V(K) \} + \{ uv : uv \notin E(G), ~ u, v \in V(K)\}.
     \end{align*}
     Both $G', H'$ contain the complete graph $K_{V_0}$, and have only had edges added between weakly globally linked pairs of vertices.
     With this,
     it is clear that the graph $G'$ is contained in $\mathcal{G}_k(K_{V_0})$ and has $k$-spine $H'$ obtained from $K_{V_0}$.
     By our inductive argument, $G'$ is globally $d$-rigid.
     Since $G'$ is a spanning subgraph of $G+J_d(G)$,
     the latter graph $G+J_d(G)$ is globally $d$-rigid.
     Hence, $G$ is globally $d$-rigid by \Cref{lem:jv}.
\end{proof}

\subsection{Tying it all together}

We first need to show that graphs of minimum degree at least $d$
obtained from an Erd\H{o}s--R\'enyi random graph process satisfy the hypotheses of \Cref{lem:weirdlem} a.a.s.
Let us begin with the easier part: the $d$-neighbourly property.
It is clear that any graph with the $d$-neighbourly property must have minimum degree $d$ (i.e.~take $B$ to be any single element set).
Lew, Nevo, Peled and Raz proved that the converse also holds a.a.s.~for Erd\H{o}s--R\'enyi random graphs.

\begin{proposition}[{\cite[Proposition 2.3]{lewetal}}]\label{prop:neighbourlyprop}
    Let $\omega \in B_n$ be chosen uniformly at random.
    Then a.a.s.~the following holds: for all $M \geq M_d(\omega)$, the graph $G(n,M,\omega)$ has the $d$-neighbourly property. 
\end{proposition}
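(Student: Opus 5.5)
Adding edges never destroys the $d$-neighbourly property: a vertex of $B$ with $d$ neighbours in $V\setminus B$ keeps them. So it suffices to show that a.a.s.\ $G(n,M_d(\omega),\omega)$ has the property. I would sandwich the random hitting time between two deterministic values. Put $M_\pm = \lceil \tfrac n2(\log n + (d-1)\log\log n \pm a_n)\rceil$ with $a_n\to\infty$ slowly (say $a_n=\log\log\log n$). The standard minimum-degree threshold computation (expected number of vertices of degree $<d$, plus a second-moment argument) gives a.a.s.\ $M_-\le M_d(\omega)\le M_+$. Write $G_-=G(n,M_-,\omega)\subseteq G_d=G(n,M_d(\omega),\omega)\subseteq G_+=G(n,M_+,\omega)$. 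I would then prove structural properties of $G_-$ and $G_+$ at fixed edge counts, transferring from $G(n,p)$ as usual, and deduce the property for $G_d$ deterministically.

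The properties I would establish a.a.s.\ are the following. Call a vertex \emph{light} if its degree in $G_-$ is less than $10d$, and \emph{heavy} otherwise.
\begin{enumerate}[label=(P\arabic*)]
\item Any two light vertices are at distance at least $5$ in $G_+$.
\item Every $B$ with $|B|\le \varepsilon n$ spans at most $2|B|$ edges in $G_+$ (in fact far fewer when $|B|\le n/\log^2 n$).
\item Every $B$ with $\varepsilon n\le|B|\le n/2$ has $e_{G_-}(B,V\setminus B)\ge c\,|B|\log n$.
\end{enumerate}
(P1) and (P2) follow by first-moment union bounds over small configurations and subsets: light vertices are only $n^{o(1)}$ in number and behave nearly independently, and small sets are sparse because $p\approx \log n/n$. (P3) follows from Chernoff bounds and a union bound over $\binom{n}{|B|}$ sets.

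The deterministic deduction splits into two cases. In the large case, (P3) and $|B|\le n/2$ force some vertex of $B$ to have many neighbours in $V\setminus B$ already in $G_-$: if every vertex had fewer than $d$, then $e(B,V\setminus B)<d|B|\ll c|B|\log n$. In the small case, (P2) shows that fewer than $|B|/2$ vertices of $B$ have at least $8$ neighbours inside $B$ in $G_+$. So more than half of $B$ consists of vertices with at most $7$ neighbours inside $B$, even in $G_d\subseteq G_+$.
\begin{itemize}
\item If one of these vertices is heavy, it has at least $10d-7\ge d$ neighbours outside $B$ in $G_d$, and we are done.
\item Otherwise they are all light, so $B$ contains at least $|B|/2$ light vertices. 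By (P1), distinct light vertices have disjoint closed neighbourhoods, and in fact no light vertex has a neighbour adjacent to another light vertex.
\item A light vertex $v\in B$ all of whose at least $d$ neighbours in $G_d$ lie outside $B$ finishes the proof, since $\delta(G_d)\ge d$. So we may assume every light vertex of $B$ has a neighbour inside $B$, and that neighbour is not light.
\item By (P1), these inside neighbours are pairwise distinct and are not themselves light. This yields at least $|B|/2$ further heavy vertices of $B$, which is impossible since light vertices are already more than half of $B$.
\end{itemize}
I would tighten this counting by applying (P2) in its stronger form for $|B|\le n/\log^2 n$, where $e(B)\le(1+o(1))|B|$.

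The main obstacle is this small-set case. The interplay between light vertices, the sparsity bound and the edges added between $M_-$ and $M_d(\omega)$ needs care: $G_d$ may contain $\Theta(na_n)$ more edges than $G_-$, so every "inside $B$" count must be taken in $G_+$ while every "heavy" count must be taken in $G_-$. I would first try the cleanest version: prove (P1) and (P2) directly in $G_+$ and heaviness in $G_-$, and check that the union bounds survive the $a_n$ slack.
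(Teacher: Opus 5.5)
The paper does not prove this statement; it imports it from Lew--Nevo--Peled--Raz, so your argument has to stand on its own. Most of it does. The monotonicity reduction to the hitting time, the sandwich $G_-\subseteq G_d\subseteq G_+$, (P1), and (P3) in the range you state are all correct. Your small-set counting is also correct: each light vertex of $B$ with at most $7$ inside neighbours must have a $G_d$-neighbour in $B$, which by (P1) is heavy and different for different light vertices, forcing $|B|\ge 2|B_7|$.

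\textbf{The gap: (P2) is false in the range $|B|\le\varepsilon n$.} A fixed set of size $\varepsilon n$ spans on average about $\varepsilon^2 n\log n/2\gg 2\varepsilon n$ edges. More generally, typical $s$-sets span more than $2s$ edges once $s\gg n/\log n$. The first-moment computation you have in mind gives
\[
\binom{n}{s}\binom{\binom{s}{2}}{2s}p^{2s}\le\Bigl(\frac{en}{s}\Bigr)^{s}\Bigl(\frac{esp}{4}\Bigr)^{2s}=\Bigl(\frac{e^{3}s\,(np)^{2}}{16\,n}\Bigr)^{s},
\]
which is summably small only for $s\le c\,n/\log^{2}n$. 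So, as written, sets with $c\,n/\log^2 n\lesssim|B|<\varepsilon n$ are handled by neither of your two cases.

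For the same reason, your proposed strengthening $e(B)\le(1+o(1))|B|$ for $|B|\le n/\log^2 n$ is also false. With $(1+\eta)|B|$ edges, the same bound only works for $|B|\le n/\log^{K}n$ with $\eta>1/(K-1)$; this is why the paper's appendix pairs the constant $1.25$ with $n\log^{-6}n$. Fortunately your argument never needs it.

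\textbf{The repair: lower the threshold between the two cases rather than strengthen (P2).} Put the cut at $n/\log^{3}n$.

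For $|B|\le n/\log^3 n$, the display above is at most $(C/\log n)^{|B|}$ for $|B|\ge 5$, and the bound is trivial for $|B|\le 4$. So a.a.s.\ every such $B$ spans \emph{strictly} fewer than $2|B|$ edges in $G_+$. Strictness is what your ``more than half of $B$'' step uses. With only $e(B)\le 2|B|$ you get $|B_7|\ge|B|/2$, and then $|B|\ge 2|B_7|$ is no longer a contradiction.

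For $n/\log^3 n\le|B|\le n/2$, the count $e_{G_-}(B,V\setminus B)$ is binomial with mean $\mu\ge(1-o(1))|B|\log n/2$. By Chernoff it falls below $\mu/2$ with probability at most $e^{-\mu/8}\le n^{-|B|/17}$. In this range $\binom{n}{|B|}\le(e\log^{3}n)^{|B|}$, so the union bound is $o(1)$: the Chernoff exponent is of order $|B|\log n$, whereas the entropy is only $O(|B|\log\log n)$.

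With (P2) and (P3) restated on these complementary ranges, your deterministic deduction goes through verbatim and gives a complete proof.
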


We obtain the large $k$-connected $k$-cores we need by adapting classical results of {\L}uczak \cite{luczak}.

\begin{theorem}\label{thm:luczaknew}
    Let $\omega \in B_n$ be chosen uniformly at random.
    Then a.a.s.~the following holds for fixed $k \geq 3$: for each $M \geq M_2(\omega)$, the $k$-core of $G(n,M,\omega)$ is $k$-connected and contains at least $5n/9$ vertices.
\end{theorem}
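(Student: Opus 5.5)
Since $M_2(\omega)$ is, a.a.s., of order $\tfrac12 n(\log n+\log\log n)$, every graph in the process from that point on is far denser than the $k$-core emergence threshold, which is of order $n$ edges. Everything therefore reduces to classical statements of {\L}uczak about $G(n,M)$ at linear edge density, combined with monotonicity along the process.

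\textbf{Step 1: connectivity, via a single large core.} The plan rests on the following form of {\L}uczak's result. For fixed $k\geq 3$ there is a constant $c_k$ such that, a.a.s., for every $M\geq c_k n$ the $k$-core of $G(n,M,\omega)$ is $k$-connected. {\L}uczak states this for the whole random graph process: once the $k$-core is nonempty it is a.a.s.\ $k$-connected for all subsequent times. Because the statement is uniform over $M$, I would quote it in that form rather than re-derive it.

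\textbf{Step 2: the size bound.} I need $|V(k)|\geq 5n/9$ for all $M\geq M_2(\omega)$. The $k$-core is monotone in $M$, since adding edges only enlarges it. It therefore suffices to prove the bound at a single time $M^*$ together with $M_2(\omega)\geq M^*$ a.a.s. I would take $M^*=Cn$ for a large constant $C$. By {\L}uczak (or Pittel--Spencer--Wormald), a.a.s.\ the $k$-core of $G(n,Cn)$ has $(1-\varepsilon_C+o(1))n$ vertices, where $\varepsilon_C\to 0$ as $C\to\infty$. Alternatively, an elementary argument works: a.a.s.\ the number of vertices of degree below $2k$ in $G(n,Cn)$ is at most $e^{-C/2}n$. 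A standard peeling or expansion argument, using that a.a.s.\ no set of fewer than $\delta n$ vertices spans more than $(1+\eta)$ times its size in edges, then shows that the peeling process started from the low-degree vertices removes at most $O(e^{-C/2}n)$ vertices. Choosing $C$ so that this is below $4n/9$ gives the bound at $M^*$. Separately, $M_2(\omega)\geq \tfrac{1}{3}n\log n\gg Cn$ a.a.s., because at $Cn$ edges there are still isolated vertices. Intersecting these finitely many a.a.s.\ events gives the conclusion for all $M\geq M_2(\omega)$ at once.

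\textbf{Step 3: assembly and the main difficulty.} The combined statement follows because the events in Steps 1 and 2 are each a.a.s., and each is uniform in $M$, either directly or through monotonicity. The main obstacle is Step 1. Unlike size, connectivity of the $k$-core is not monotone in $M$: new vertices joining the core could in principle create small cuts. I therefore cannot reduce it to one time point and must rely on {\L}uczak's process-level theorem. If only a fixed-$M$ version were available, the fallback would be a union bound over the polynomially many values of $M$ between $Cn$ and $\binom n2$, which requires failure probabilities of $o(n^{-2})$. Alternatively one can argue directly. Any vertex cut $S$ with $|S|<k$ in the core would separate a small side. The small side would have all its vertices of core-degree at least $k$ but fewer than $k$ edges leaving it, so it would be a dense small subgraph. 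Sparse small subgraphs are ruled out a.a.s.\ simultaneously for all $M\leq n\log^2 n$ by a first-moment count. For $M\geq n\log^2 n$ the whole graph is already $k$-connected, and its core is all of $[n]$.
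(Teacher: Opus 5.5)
The gap is in Step~1.

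Your Step~2 is sound and mirrors the paper: the $k$-core only grows with $M$, so one density below $M_2(\omega)$ suffices. You take $Cn$ edges and Pittel--Spencer--Wormald. The paper takes $p=\log n/n$ and {\L}uczak's Theorem~2, which gives the much stronger bound $|V(k)|\ge n-n\exp(-\log^{\varepsilon}n)$.

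Step~1 carries the whole difficulty, and you do not prove it; you attribute it to {\L}uczak in a form uniform over the entire process. The paper instead reads {\L}uczak's connectivity result as a statement about $G(n,p)$ at a single density. It notes that ``the $k$-core is $k$-connected'' is not monotone, and devotes its appendix to transferring the result to all $M\ge M_2(\omega)$ simultaneously. Without a precise reference to a theorem that is genuinely uniform over the process, Step~1 assumes what is to be proved. Your union-bound fallback needs $o(n^{-2})$ failure probabilities, and nothing you cite provides them.

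Your direct fallback has the right skeleton. It uses a decreasing sparsity property checked at the top of the window, plus $k$-connectivity of the whole graph above it. The paper switches at $M_k(\omega)$ via Bollob\'as--Thomason; your switch at $n\log^2 n$ would also work. But the sketch lacks the ingredient that closes it: nothing shows that a separator $T$ of the core with $|T|<k$ leaves a \emph{small} side.

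At these densities, sparsity only controls sets of size up to $n/\mathrm{polylog}(n)$. So you must separately exclude splits of the core into two pieces, each of size at least about $n\log^{-6}n$, with no edges between them. This is the paper's Property~$Q_{k,\varepsilon}$. It is an increasing property, verified at $p=\log n/n$ by a first-moment count over pairs $(S_0,S_1)$, where $S_0=T\cup U$ absorbs the separator together with the unknown set $U$ of non-core vertices.

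That count only works because $|U|\le n\exp(-\log^{\varepsilon}n)$. If $|U|$ were linear, the factor $\binom{n}{|U|}=e^{\Theta(n)}$ would swamp it. This is why the paper needs Property~$P_{k,\varepsilon}$ and not merely $|V(k)|\ge 5n/9$.

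Also, the small side $A$ does not have ``fewer than $k$ edges leaving it.'' It has fewer than $k$ outside neighbours in the core, which may be joined to $A$ by many edges. The density count must therefore be made in $G[A\cup T]$: at least $k|A|/2$ edges on $|A|+|T|$ vertices. The cases $k=3,4$ and bounded $|A|$ then need separate treatment, as in the paper's Claim.
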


This result mostly follows from adapting {\L}uczak's results from $G(n,p)$ to Erd\H{o}s--R\'enyi random graph processes.
As their proofs are very similar, we defer the proof to \Cref{appendix}.

Before we tie everything together,
we require the following sufficient condition for global rigidity recently proven by Villányi.

\begin{theorem}[Villányi~\cite{vill}]\label{thm:soma}
    Every $d(d+1)$-connected graph is globally $d$-rigid.
\end{theorem}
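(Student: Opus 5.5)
The plan is to prove global $d$-rigidity via the weakly-globally-linked machinery of \Cref{lem:jv} and \Cref{thm:jv}, by induction on the number of vertices. We treat $d$-rigidity as a separate (and harder) first step. Throughout write $k=d(d+1)$ and let $G$ be $k$-connected. If $G$ is complete there is nothing to prove. Otherwise we aim to show that every non-adjacent pair $\{x,y\}$ is weakly globally $d$-linked. Then $G+J_d(G)$ is complete, hence globally $d$-rigid, and \Cref{lem:jv} finishes the argument.

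\textbf{Step 1 ($d$-rigidity).} We first show that every $k$-connected graph is $d$-rigid, which is the Lov\'asz--Yemini-type statement. The intended route is to find, inside a $k$-connected graph, a spanning subgraph that is rigid for a combinatorial reason. Concretely, we use the $k$-connectivity to select a vertex ordering in which each vertex beyond an initial rigid clique sends at least $d$ edges backwards, arranged so that $d$-dimensional 0-extensions (which preserve rigidity, cf.\ \Cref{lem:0ext}) together with edge additions build the whole graph. The specific tool would be a flow/orientation argument: a $k$-connected graph admits many internally disjoint paths from any vertex to any $d$-set, and we use these paths to route a ``rigidity certificate''. This is where I expect the real difficulty to lie. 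Naive orderings only give out-degree $d$ for some vertices, and one needs a global argument, likely an inductive contraction/splitting argument in the rigidity matroid, to show the rank of the rigidity matrix is $d|V|-\binom{d+1}{2}$.

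\textbf{Step 2 (local rigidity around a non-edge).} Fix non-adjacent $x,y$. Choose $V_0$ to be a smallest vertex set containing $x$, $y$ and a set of $k$ internally disjoint $x$--$y$ paths (or a suitable neighbourhood ball). We then argue that $G[V_0]$ is $d$-rigid. Either $G[V_0]$ inherits enough connectivity to apply Step 1, or we enlarge $V_0$ greedily by vertices having at least $d$ neighbours in $V_0$; each such addition is a 0-extension plus extra edges and so preserves rigidity.

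\textbf{Step 3 (global rigidity of the clique graph, and conclusion).} Next we show that $\cliq(G,V_0)$ is globally $d$-rigid. Because $G$ is $k$-connected, every component of $G-V_0$ has at least $k$ attachment vertices in $V_0$, and these become a clique in $\cliq(G,V_0)$. We expect $\cliq(G,V_0)$ to be $k$-connected with strictly fewer vertices than $G$, provided $V_0\neq V$. Induction on $|V|$ then gives global rigidity; the base cases are small complete graphs. If instead $V_0=V$, we must choose $V_0$ differently, for example by removing a vertex of $V_0$ not on the chosen paths. Since $G$ is $(d+1)$-connected, \Cref{thm:jv} applies and shows $\{x,y\}$ is weakly globally $d$-linked. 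Doing this for every non-adjacent pair and applying \Cref{lem:jv} completes the proof.

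The main obstacle is Step 1, together with making sure the $V_0$ chosen in Step 2 is simultaneously a proper subset of $V$ and induces a $d$-rigid graph. My first attempt at both would be the greedy ``at least $d$ back-neighbours'' ordering, in the spirit of \Cref{lem:weirdlem}, started from a large clique-like core.
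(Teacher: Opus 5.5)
The paper does not prove this statement: it quotes it from Vill\'anyi~\cite{vill} and uses it as a black box, precisely to certify that the seed $G(d(d+1))$ in the proof of \Cref{thm:random} is globally $d$-rigid. Your proposal has a genuine gap, and it is the one you flag yourself. Step~1 is not an argument but a restatement of the Lov\'asz--Yemini conjecture, which was open for $d\ge 3$ until Vill\'anyi's work. The mechanism you suggest cannot supply it. An ordering in which every vertex has at least $d$ back-neighbours certifies rigidity only by propagating it from a seed already known to be rigid, and highly connected graphs need not contain small rigid seeds. For $d\ge 2$, a $d$-rigid graph on $m\ge 3$ vertices is either complete (if $m\le d+1$) or has at least $dm-\binom{d+1}{2}$ edges (if $m\ge d+2$). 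In both cases it has at least $m$ edges, so it contains a cycle of length at most $m$. Hence in a $k$-connected graph ($k=d(d+1)$) of girth $g$, every rigid subgraph on at least three vertices has at least $g$ vertices; such graphs exist for every $g$, e.g.\ among random $k$-regular graphs. Already in the triangle-free graph $K_{k,k}$ the greedy process started from a clique stalls at once. In general, any usable seed is itself a large rigid subgraph, whose rigidity is a problem of the same kind as the original. The ``flow/orientation'' and ``contraction/splitting'' ideas are never specified, so nothing in the proposal yields rank $d|V|-\binom{d+1}{2}$. Borrowing \Cref{lem:weirdlem} or \Cref{lem:0extplusedgesisgr} would be circular, since in the paper their rigid base is exactly what Vill\'anyi's theorem provides.

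Step~2 has the same gap in local form, and granting Step~1 does not close it. To get anything from \Cref{thm:jv} you need $x,y\in V_0\subsetneq V$ with $G[V_0]$ $d$-rigid; if $V_0=V$ then $\cliq(G,V_0)=G$ and the argument is circular. The $k$ internally disjoint paths alone span a theta graph with $|V_0|+k-2$ edges. Since each path has an interior vertex, this is fewer than the $d|V_0|-\binom{d+1}{2}$ edges required for $d$-rigidity. So rigidity would have to come from chords that nothing in your argument controls, and greedy enlargement only propagates rigidity that is already present. Step~1 does not apply to $G[V_0]$, because proper induced subgraphs need not be $k$-connected. The fallback of deleting a vertex $u$ asks for rigidity of a merely $(k-1)$-connected graph, which connectivity cannot give. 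Take copies of $K_{k-1}$ in which each vertex gets exactly one edge to another copy, following a $(k-1)$-regular, $(k-1)$-edge-connected pattern. This graph is $(k-1)$-connected but flexible, and coning it gives a $k$-connected $G$ with $G-u$ flexible for the cone vertex $u$. Moreover, if you could show $G-u$ rigid for every $u$, Tanigawa's vertex-redundancy criterion \cite{tan15} would already give global rigidity, so the weakly-linked detour does not reduce the difficulty. Your Step~3 is fine: $\cliq(G,V_0)$ stays $k$-connected with fewer vertices, so induction applies. The missing input is a way to obtain, from $d(d+1)$-connectivity alone, rigidity of $G$ and of a suitable proper induced subgraph through each non-adjacent pair. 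That is precisely the hard content that Vill\'anyi's theorem supplies.
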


With this, we are now ready to prove \Cref{thm:random}.

\begin{proof}[Proof of \Cref{thm:random}]
    The result holds for $d=1$ by \cite[Proposition 3.8]{dewar23}, so we suppose that $d \geq 2$.
    Sample $\omega \in B_n$ uniformly at random.
    We first observe that $G(n,M,\omega)$ is not $d$-rigid if $M < M_d(\omega)$,
    and so the real and complex $d$-realisation numbers for $G(n,M,\omega)$ are both $\infty$.
    By \Cref{prop:neighbourlyprop} and \Cref{thm:luczaknew} the following holds a.a.s.: for each $M \geq M_d(\omega)$, the graph $G(n,M,\omega)$ has the $d$-neighbourly property and, for each $k \geq d+1$, a $k$-connected $k$-core with at least $5n/9$ vertices.
    \Cref{lem:weirdlem} thus implies the following holds a.a.s.: for each $M \geq M_d(\omega)$, $G(n,M,\omega)$ has the vertex ordering $v_1,\ldots, v_n$ described in \Cref{lem:weirdlem}.
    We now proceed by fixing $G=G(n,M,\omega)$ for arbitrary $M \geq M_d(\omega)$ with the assumption that this vertex ordering exists for the remainder of the proof.

    Fix $k= d(d+1)$.
    Conditions \ref{lem:weirdlem1} and \ref{lem:weirdlem2} of \Cref{lem:weirdlem} imply that $G(k) = G[\{v_1,\ldots,v_s\}]$ and $G(d+1) = G[\{v_1,\ldots,v_t\}]$.
    By \Cref{thm:soma},
    $G(k)$ is globally $d$-rigid.
    \Cref{lem:0extplusedgesisgr} and condition \ref{lem:weirdlem3} of \Cref{lem:weirdlem} now imply that $G(d+1)$ is also globally $d$-rigid.
    Conditions \ref{lem:weirdlem2} and \ref{lem:weirdlem3} of \Cref{lem:weirdlem} imply that $G$ is formed from $G(d+1)$ by a sequence of $n-t$ $d$-dimensional 0-extensions.
    The result now follows as $d$-dimensional 0-extensions exactly double the complex and real realisation number of a graph (\Cref{lem:0ext} and \Cref{prop:0extreal} respectively).
\end{proof}

\section{PSD matrix completion and spherical realisation numbers}\label{sec:matrix}

In this section we adapt the previous results to spherical realisations of graphs.
We then use a correspondence between spherical rigidity and PSD matrix completion to prove \Cref{thm:partialmatrix}.

\subsection{Spherical realisation numbers for Erd\H{o}s--R\'enyi random graphs}

We consider realisations of graphs on the sphere.
We denote the $d$-dimensional sphere by
\[
\SS_\FF^d = \{x \in \FF^{d+1} : \| x \|^2 = 1 \} \, 
\]
where $\FF\in \{\RR,\CC\}$ give the real and complex spheres respectively.
We extend the notion of genericity from $\RR^d$ to $\SS_\RR^d$, saying a realisation $\mathbf{p} \colon V \rightarrow \SS_\RR^d$ is \emph{generic} if and only if $\td[\QQ(\mathbf{p}) \colon \QQ] = d|V|$. Note that the coordinates are never algebraically independent.
Whiteley \cite{Whiteley:1983} proved that a graph $G$ is rigid when embedded generically on $\SS_\RR^d$ if and only if it is $d$-rigid. 
This equivalence has motivated research on
the $d$-dimensional \emph{spherical realisation number} $c_d^*(G)$ of a graph $G$: the number of equivalent realisations for any given generic realisation of a graph in $\SS_\CC^d$.
See for example \cite{BARTZOS2021189,dewar23,ggs20}.
As with the Euclidean case, we define the \emph{real spherical realisation number} $r_d^*(G)$ to be the maximum number of equivalent realisations as we range over all generic spherical realisations of $G$.

While there seems to be no straightforward way to determine the spherical realisation number from the Euclidean one, or vice versa, many techniques used to count realisations can often be adapted to count spherical realisations \cite{dewar23}.
In particular, we can connect the $d$-dimensional spherical realisation number of $G$ to the $(d+1)$-dimensional realisation number of the cone of $G$.
Recall that the \emph{cone} of $G$, denoted $G*o$, is obtained by adding a new vertex $o$ adjacent to all other vertices.

\begin{theorem}[\cite{dewar23}] \label{thm:spherical+complex}
    Let $G$ be a graph and $G*o$ its cone.
    Then $c_d^*(G) = c_{d+1}(G*o)$.
\end{theorem}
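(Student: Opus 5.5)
The plan is to show that the spherical solution set of $G$ and the Euclidean solution set of the cone $G*o$ correspond bijectively, modulo the relevant congruence groups, and that this correspondence respects genericity.

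\textbf{From spherical realisations of $G$ to realisations of $G*o$.} Given a realisation $\mathbf{p}\colon V\to \SS_\CC^d\subset \CC^{d+1}$, define $\hat{\mathbf{p}}$ on $G*o$ by $\hat{\mathbf{p}}(o)=0$ and $\hat{\mathbf{p}}(v)=\mathbf{p}(v)$. The cone edges $ov$ then have squared length $\|\mathbf{p}(v)\|^2=1$. For an edge $vw$ we have
\[
\|\mathbf{p}(v)-\mathbf{p}(w)\|^2 = 2-2\,\mathbf{p}(v)\cdot\mathbf{p}(w),
\]
so spherical equivalence for $G$ is the same as Euclidean equivalence for $G*o$ among realisations with $o$ at the origin and all cone edges of length $1$.

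\textbf{Handling the full group $E(d+1,\CC)$.} Any realisation $\mathbf{q}$ of $G*o$ equivalent to $\hat{\mathbf{p}}$ can be translated so that $\mathbf{q}(o)=0$. After that, equivalence forces $\|\mathbf{q}(v)\|^2=1$, so $\mathbf{q}|_V$ is a spherical realisation equivalent to $\mathbf{p}$. Conversely, if two such translated realisations are related by some $x\mapsto Ax+y$ fixing $0$, then $y=0$ and the map is the orthogonal matrix $A\in O(d+1,\CC)$. These orthogonal maps are exactly the congruences of the complex sphere (Gortler--Thurston, \cite[Corollary 8]{Gortler2014}). Hence equivalence classes modulo congruence match bijectively, and $c^*_d(G,\mathbf{p}) = c_{d+1}(G*o,\hat{\mathbf{p}})$.

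\textbf{Genericity, the main obstacle.} The difficulty is that $\hat{\mathbf{p}}$ is never generic in $\CC^{d+1}$: $o$ sits at the origin and every other point lies on the unit sphere. To get around this, I would use that $c_{d+1}(G*o,\cdot)$ is constant on a Zariski-open dense set of edge-length vectors. Precisely, it is the generic fibre cardinality of the squared-edge-length map modulo the group, which equals the degree of the map on the quotient. So I would work directly with edge-length vectors. Every generic realisation $\mathbf{q}$ of $G*o$ can be moved by translation and by \emph{scaling} so that $o$ is at $0$. Scaling multiplies all squared edge lengths by a common constant, which does not change fibre cardinality.

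The remaining task is to show that rescaling each $\mathbf{q}(v)$ radially onto the sphere gives a spherical realisation with generic image. Radial rescaling preserves the fibre count because the measurement map is homogeneous in a suitable sense. The cleaner route is a dimension and dominance argument:
1. The measurement map for $G*o$, restricted to realisations with $\hat{\mathbf{p}}(o)=0$, has image of the same dimension as the full measurement map.
2. The spherical measurement map for $G$ (cone edges fixed at $1$) dominates the slice of that image where all cone lengths equal $1$.
3. By the dilation action $v\mapsto \lambda_v$-type arguments, the projective rescaling of the cone edge lengths shows that this slice meets the Zariski-open set where the fibre count is constant.

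I would formalise step 3 via the standard lemma that if $f\colon X\to Y$ is dominant with generic fibre count $N$, and $Z\subset Y$ is a subvariety not contained in the bad locus, then points of $Z$ coming from generic points of $f^{-1}(Z)$ have fibre count $N$. Verifying the ``not contained in the bad locus'' hypothesis is the step I expect to be hardest. I would try to verify it using the torus action that rescales each vertex vector independently.
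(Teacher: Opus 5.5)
Your first two steps are sound: for every spherical realisation $\mathbf{p}$ you correctly obtain $c^*_d(G,\mathbf{p}) = c_{d+1}(G*o,\hat{\mathbf{p}})$. The genuine gap is the genericity transfer, which is the real content of the theorem.

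Since $\hat{\mathbf{p}}$ is never generic, you need that the slice of edge-length space with all cone lengths equal to $1$ does not lie inside the locus where the fibre count changes. Your steps 1--3 are only sketched, and you explicitly defer this hypothesis to an argument you ``would try''. Nothing you wrote rules it out: a proper subvariety of edge-length space can in general lie entirely in that bad locus. Your claim that radial rescaling preserves fibre counts ``because the measurement map is homogeneous in a suitable sense'' is only asserted. Independent rescaling of each $\mathbf{q}(v)$ is not a homogeneity of the measurement map. The global scaling you mention cannot normalise the generically distinct cone lengths at all.

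The missing idea is that per-vertex rescaling gives an exact bijection fibre by fibre, so no Zariski-openness or dominance argument is needed. Take a generic $\mathbf{q}$ of $G*o$ translated so that $\mathbf{q}(o)=0$, and fix $\lambda_v$ with $\lambda_v^2=\|\mathbf{q}(v)\|^2\neq 0$. Every $\mathbf{q}'$ equivalent to $\mathbf{q}$ with $\mathbf{q}'(o)=0$ satisfies $\|\mathbf{q}'(v)\|^2=\lambda_v^2$ and, for $vw\in E$,
\[
\mathbf{q}'(v)\cdot\mathbf{q}'(w)=\tfrac12\bigl(\lambda_v^2+\lambda_w^2-\|\mathbf{q}(v)-\mathbf{q}(w)\|^2\bigr)=\mathbf{q}(v)\cdot\mathbf{q}(w).
\]
Hence $\mathbf{q}'\mapsto(\mathbf{q}'(v)/\lambda_v)_{v\in V}$ is a bijection onto the spherical realisations equivalent to $\tilde{\mathbf{q}}:=(\mathbf{q}(v)/\lambda_v)_{v\in V}$. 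Its inverse is $\mathbf{r}\mapsto(\lambda_v\mathbf{r}(v))_{v\in V}$ with $o$ placed at $0$, and the map commutes with $O(d+1,\CC)$. So $c_{d+1}(G*o,\mathbf{q})=c^*_d(G,\tilde{\mathbf{q}})$ exactly.

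Genericity transfers by transcendence degree. The inclusion $\QQ(\mathbf{q}|_V)\subseteq\QQ(\tilde{\mathbf{q}},(\lambda_v)_{v\in V})$ shows $\tilde{\mathbf{q}}$ is generic on the sphere. Conversely, every generic spherical realisation $\tilde{\mathbf{p}}$ lifts to a generic realisation of $G*o$: choose the $\lambda_v$ algebraically independent over $\QQ(\tilde{\mathbf{p}})$ and translate by a generic vector.

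The paper does not reprove this theorem; it cites \cite{dewar23}. However, this is exactly the rescaling mechanism, via \cite[Proposition 2.4]{dewar23}, that the paper uses in its proof of \Cref{prop:spherical+real}. Your torus-action idea, once carried out, amounts to this same computation, and at that point the bad-locus machinery is unnecessary.
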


We can obtain a similar result for real spherical realisation numbers:

\begin{proposition} \label{prop:spherical+real}
    Let $G$ be a graph and $G*o$ its cone.
    Then $r_d^*(G) = r_{d+1}(G*o)$.
\end{proposition}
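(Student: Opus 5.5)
We prove $r_d^*(G) = r_{d+1}(G*o)$ by building an explicit bijection between real spherical realisations of $G$ modulo $O(d+1,\RR)$ and real realisations of $G*o$ in $\RR^{d+1}$ modulo $E(d+1,\RR)$. This is the real analogue of the coning correspondence behind \Cref{thm:spherical+complex}, and it must respect both genericity and equivalence.

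\textbf{From the sphere to the cone.} Given $\mathbf{p}\colon V\to \SS_\RR^d$, define $\hat{\mathbf{p}}\colon V\cup\{o\}\to\RR^{d+1}$ by $\hat{\mathbf{p}}(o)=0$ and $\hat{\mathbf{p}}(v)=\mathbf{p}(v)$. Two spherical realisations $\mathbf{p},\mathbf{q}$ are equivalent on $G$ if and only if $\hat{\mathbf{p}},\hat{\mathbf{q}}$ are equivalent on $G*o$. Indeed, the cone edges force $\|\hat{\mathbf{q}}(v)\|=1$. On unit vectors, $\|x-y\|^2=2-2x\cdot y$, so spherical edge lengths (inner products) match Euclidean edge lengths.

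\textbf{From the cone to the sphere.} Conversely, let $\mathbf{q}$ be any realisation of $G*o$ in $\RR^{d+1}$ equivalent to $\hat{\mathbf{p}}$. Translate so that $\mathbf{q}(o)=0$; then every other vertex lies on the unit sphere. This gives a bijection between equivalence classes, provided congruence matches. The Euclidean isometries fixing $0$ are exactly $O(d+1,\RR)$, which is the isometry group of $\SS_\RR^d$. Hence $r_d^*(G,\mathbf{p})=r_{d+1}(G*o,\hat{\mathbf{p}})$ for every spherical realisation $\mathbf{p}$.

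\textbf{Matching the two maxima.} We must compare maxima over generic realisations, and $\hat{\mathbf{p}}$ is never generic in $\RR^{d+1}$: its coordinates satisfy $\hat{\mathbf{p}}(o)=0$ and $\|\hat{\mathbf{p}}(v)\|=1$. We handle this with a scaling argument, which we expect to be the main obstacle.

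For generic $\mathbf{r}$ of $G*o$, translate so that $\mathbf{r}(o)=0$. Then radially project, setting $\mathbf{p}(v)=\mathbf{r}(v)/\|\mathbf{r}(v)\|$. Equivalence classes of $G*o$ with cone lengths $\ell_v=\|\mathbf{r}(v)\|$ correspond bijectively, via $x_v\mapsto x_v/\ell_v$, to configurations with unit cone edges. The $G$-edge constraints transform by
\[
\|x_v-x_w\|^2=\ell_v^2+\ell_w^2-2\ell_v\ell_w\,y_v\cdot y_w .
\]
Thus prescribing the edge length $x_v$ to $x_w$ is equivalent to prescribing the inner product $y_v\cdot y_w$, and $y_v\cdot y_w$ is determined by $\mathbf{p}$. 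So $r_{d+1}(G*o,\mathbf{r})=r_d^*(G,\mathbf{p})$.

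It remains to check that $\mathbf{p}$ is generic on the sphere. Here $\QQ(\mathbf{r})$ has transcendence degree $(d+1)(|V|+1)$, and $\QQ(\mathbf{r})$ is algebraic over the field generated by the translation (the coordinates of $\mathbf{r}(o)$), the scales $\ell_v$, and $\QQ(\mathbf{p})$. Counting gives $\td[\QQ(\mathbf{p})\colon\QQ]=d|V|$, so $r_{d+1}(G*o)\le r_d^*(G)$.

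Conversely, given generic spherical $\mathbf{p}$, choose scales $\ell_v$ and a translation algebraically independent over $\QQ(\mathbf{p})$. The resulting $\mathbf{r}$ is generic by the same transcendence count, and has the same count, so $r_d^*(G)\le r_{d+1}(G*o)$. The delicate part is verifying that the $\ell_v$ together with $\mathbf{p}$ are algebraically independent in the right amounts; we handle this with standard transcendence-degree additivity.
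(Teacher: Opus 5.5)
Your proposal is correct and follows essentially the same route as the paper. You translate the cone vertex to the origin and radially rescale by $\ell_v=\|\mathbf{r}(v)\|$ to pass between equivalent realisations of $G*o$ and of $G$ on $\SS_\RR^d$; for the converse you choose scales and a translation algebraically independent over $\QQ(\mathbf{p})$. The differences are only in detail: you check equivalence and non-congruence directly via $\|x_v-x_w\|^2=\ell_v^2+\ell_w^2-2\ell_v\ell_w\,y_v\cdot y_w$ where the paper cites \cite[Proposition 2.4]{dewar23}, and your transcendence-degree count is a tidier version of the paper's algebraic-dependence argument for genericity.
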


\begin{proof}
    Let $r_{d+1}(G*o) = t$ and let $\mathbf{p}^{(1)}, \dots, \mathbf{p}^{(t)}$ be equivalent but non-congruent realisations of $G*o$ in $\RR^{d+1}$ with $\mathbf{p}^{(1)}$ generic.
    By translating, we will replace for each $1 \leq i \leq t$ the realisation $\mathbf{p}^{(i)}$ with a realisation for which $\mathbf{p}^{(i)}(o)$ is placed at the origin.
    With this we have $\td[\QQ(\mathbf{p}^{(1)}) \colon \QQ] = (d+1)|V|$.
    Let $\lambda_v = \|\mathbf{p}^{(1)}(v) - \mathbf{p}^{(1)}(o)\| = \|\mathbf{p}^{(1)}(v)\|$, and note that this equality holds for all $\mathbf{p}^{(i)}$ as they are equivalent.
    Define the realisation $\tilde{\mathbf{p}}^{(i)}$ of $G$ by $\tilde{\mathbf{p}}^{(i)}(v) = \mathbf{p}^{(i)}(v) / \lambda_v$ for all $v \in V$.
    It follows that each $\tilde{\mathbf{p}}^{(i)}$ satisfies $\|\tilde{\mathbf{p}}^{(i)}(v)\| = 1$ and hence is a spherical realisation of $G$.
    The proof of \cite[Proposition 2.4]{dewar23} shows that $\tilde{\mathbf{p}}^{(1)}, \dots, \tilde{\mathbf{p}}^{(t)}$ are all equivalent.
    To show $\tilde{\mathbf{p}}^{(1)}$ is generic, suppose for contradiction that the scalars $(\tilde{\mathbf{p}}^{(i)}_{j}(v) \colon j \in [d], ~ v \in V )$ are algebraically dependent.
    As $\tilde{\mathbf{p}}^{(i)}_{j}(v)$ is an algebraic combination of $({\mathbf{p}}^{(i)}_j(v))_{j=1}^{d+1}$, this leads to an algebraic dependence between the scalars in the multiset $(\mathbf{p}^{(i)}_{j}(v) \colon j \in [d+1], ~ v \in V )$, contradicting that $\td[\QQ(\mathbf{p}^{(1)}) \colon \QQ] = (d+1)|V|$.
    It follows that $\tilde{\mathbf{p}}^{(1)}$ is generic, and hence $r_d^*(G) \geq r_{d+1}(G*o)$.

    Conversely, 
    let $r_d^*(G) = t$ and choose $\tilde{\mathbf{p}}^{(1)}, \dots, \tilde{\mathbf{p}}^{(t)}$ to be equivalent but non-congruent realisations of $G$ in $\SS_\RR^d$ with $\tilde{\mathbf{p}}^{(1)}$ generic.
    We can construct realisations of $G*o$ in $\RR^{d+1}$ by adding the new vertex $o$ at the origin.
    Pick a set $\{\lambda_v:v \in V\}$ that is algebraically independent over $\QQ(\tilde{\mathbf{p}}^{(1)})$.
    Define $\mathbf{p}^{(i)}(v) = \lambda_v \tilde{\mathbf{p}}^{(i)}(v)$ for all $1 \leq i \leq t$.
    The resulting realisations $\mathbf{p}^{(1)}, \dots, \mathbf{p}^{(t)}$ of $G*o$ are equivalent but non-congruent by \cite[Proposition 2.4]{dewar23}.
    A similar argument to the previous direction gives that $\td[\QQ(\mathbf{p}^{(1)}) \colon \QQ] = (d+1)|V|$.
    Now choose a vector $x \in \mathbb{R}^d$ whose coordinates are algebraically independent over $\QQ(\mathbf{p}^{(1)})$ and set $\mathbf{p} = (\mathbf{p}^{(1)} +x)_{v \in V \cup\{o\}}$.
    It is clear that $\mathbf{p}$ is a generic realisation of $G$ in $\mathbb{R}^d$, and hence $r_d^*(G) \leq r_d(G,\mathbf{p}) =  r_{d+1}(G*o)$.
\end{proof}

We can now adapt our methods from \Cref{sec:random} to count spherical realisations for random graphs.

\begin{corollary}\label{cor:random+spherical}
    Let $\omega$ be chosen uniformly at random from the set of total orderings of the edges of $K_n$.
    Then the following property holds a.a.s. (with respect to $n$):
    for any integer $0\leq M \leq \binom{n}{2}$,
    if the $(d+1)$-core of $G = G(n,M,\omega)$ contains exactly $t$ vertices,
    then
    \begin{equation*}
        c^*_d(G) = r^*_d(G) = 
        \begin{cases}
            2^{n-t} &\text{if $G$ has minimum degree at least $d$,}\\
            \infty &\text{otherwise}.
        \end{cases}
    \end{equation*}    
\end{corollary}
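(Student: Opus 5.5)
By \Cref{thm:spherical+complex} and \Cref{prop:spherical+real}, we have $c_d^*(G)=c_{d+1}(G*o)$ and $r_d^*(G)=r_{d+1}(G*o)$. So it suffices to show that, a.a.s.~and simultaneously for all $M$, the cone $G*o$ of $G=G(n,M,\omega)$ satisfies $c_{d+1}(G*o)=r_{d+1}(G*o)=2^{n-t}$ when $\delta(G)\geq d$, and is not $(d+1)$-rigid otherwise. I will rerun the argument of \Cref{thm:random} on the cone rather than apply \Cref{thm:random} directly, since $G*o$ is not itself a random graph.

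\textbf{Case $\delta(G)<d$.} A vertex of $G$ of degree less than $d$ has degree less than $d+1$ in $G*o$. Since $G*o$ has at least $d+2$ vertices, it is not $(d+1)$-rigid, and all the realisation numbers are $\infty$. This covers $M<M_d(\omega)$.

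\textbf{Case $\delta(G)\geq d$.} Then $M\geq M_d(\omega)\geq M_2(\omega)$ when $d\geq 2$; the case $d=1$ should be handled separately by a direct argument, as in \Cref{thm:random}. By \Cref{prop:neighbourlyprop} and \Cref{thm:luczaknew}, a.a.s.~for all such $M$ the graph $G$ has the $d$-neighbourly property and a $k$-connected $k$-core on at least $5n/9$ vertices, where $k=d(d+1)$. \Cref{lem:weirdlem} then gives an ordering $v_1,\dots,v_n$ with $G(k)=G[\{v_1,\dots,v_s\}]$ and $G(d+1)=G[\{v_1,\dots,v_t\}]$. Each $v_i$ with $i>s$ has at least $d$ earlier neighbours, and each $v_i$ with $i>t$ has exactly $d$ earlier neighbours.

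Now pass to the cone. Place $o$ first and then follow the same ordering. Each vertex $v_i$ with $i>t$ then has exactly $d+1$ earlier neighbours in $G*o$, so it is added by a $(d+1)$-dimensional 0-extension. The rest of the argument goes as follows.

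\begin{enumerate}
\item $G(k)*o$ is globally $(d+1)$-rigid. Global rigidity is preserved under coning (Connelly–Whiteley), and $G(k)$ is globally $d$-rigid by \Cref{thm:soma}. Alternatively, $G(k)*o$ is $(k+1)$-connected, but $k+1$ is less than $(d+1)(d+2)$, so \Cref{thm:soma} cannot be applied to it directly. I will therefore rely on the coning theorem, or on the equivalence of global rigidity on the sphere with global $d$-rigidity.
\item $G(d+1)*o$ is globally $(d+1)$-rigid. It contains, as a spanning subgraph, a graph obtained from $G(k)*o$ by $(d+1)$-dimensional 0-extensions: each $v_i$ with $s<i\leq t$ is joined to $o$ and to $d$ earlier vertices. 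Moreover $G(d+1)*o$ is $(d+2)$-connected, since $G(d+1)$ is $(d+1)$-connected by \Cref{thm:luczaknew} applied with $k=d+1\geq 3$. So \Cref{lem:0extplusedgesisgr} in dimension $d+1$ applies.
\item The $n-t$ remaining vertices are $(d+1)$-dimensional 0-extensions. By \Cref{lem:0ext} and \Cref{prop:0extreal}, each doubles both realisation numbers, giving $2^{n-t}$.
\end{enumerate}

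\textbf{Main obstacle.} The hardest point is the base global rigidity of $G(k)*o$, which needs the coning theorem rather than \Cref{thm:soma} as stated. If citing the coning result is not acceptable, I would try instead to derive it from Gortler–Thurston-type equivalences on the sphere.
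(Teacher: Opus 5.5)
Your argument is correct for $d\ge 2$. It differs from the paper in how it gets the base case.

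\textbf{The paper's route.} The paper sets $k=(d+1)(d+2)$ and observes that the cone $\widetilde G=G*o$ inherits the $(d+1)$-neighbourly property and a large core. It then reruns the whole proof of \Cref{thm:random} in dimension $d+1$ on $\widetilde G$. The base global rigidity comes from \Cref{thm:soma} applied directly in dimension $d+1$: the relevant core of $\widetilde G$ is the cone over a highly connected core of $G$, and coning raises connectivity by one.

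\textbf{Your route.} You apply \Cref{lem:weirdlem} to $G$ itself with $k=d(d+1)$ and lift the ordering to the cone by placing $o$ first. You then get the base from \Cref{thm:soma} in dimension $d$ together with the Connelly--Whiteley coning theorem. That theorem is a genuine published result ($G$ is generically globally $d$-rigid iff $G*o$ is generically globally $(d+1)$-rigid), so this step is sound. The rest of your argument goes through as written:
\begin{itemize}
\item the $(d+1)$-dimensional 0-extensions with $o$ as the extra neighbour;
\item $(d+2)$-connectivity of $G(d+1)*o$;
\item \Cref{lem:0extplusedgesisgr} in dimension $d+1$;
\item the doubling lemmas;
\item the transfer back via \Cref{thm:spherical+complex} and \Cref{prop:spherical+real}.
\end{itemize}

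\textbf{Trade-off.} Your version reuses the dimension-$d$ data verbatim and needs no neighbourliness check for the cone, but it imports an external theorem. The paper's version stays within its own toolkit, at the cost of a larger core threshold.

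\textbf{Your ``main obstacle'' disappears with the paper's trick.} \Cref{thm:luczaknew} holds for every fixed $k\ge 3$. The proof of \Cref{lem:weirdlem} works unchanged for any core threshold at least $d+1$, as long as the core has at least $5n/9$ vertices, which \Cref{thm:luczaknew} also supplies. So you can take the $((d+1)(d+2)-1)$-core of $G$ as the base. Its cone is $(d+1)(d+2)$-connected, hence globally $(d+1)$-rigid by \Cref{thm:soma}, and no coning theorem is needed.

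\textbf{On $d=1$.} Your treatment of this case is only a pointer to ``a direct argument'', but the paper's proof is no more explicit about it.
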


\begin{proof}
    Set $k=(d+1)(d+2)$.
    Analogous to the proof of \Cref{thm:random},
    it suffices to show the result holds for any $n$-vertex graph $G$ that has the $d$-neighbourly property and a $k$-connected $k$-core with at least $5n/9$ vertices.
    Now let $\widetilde{G}=G*o$ denote the cone of $G$, and set $\widetilde{V}$ to be the vertex set of $\widetilde{G}$.
    We observe that $\widetilde{G}$ has the $(d+1)$-neighbourly property as follows.
    If $B \subseteq \widetilde{V}$ contains $o$ then $o$ has more than $d+1$ neighbours in $\widetilde{V} \setminus B$.
    If $B \subseteq \widetilde{V}$ does not contain $o$, then pick $v \in B$ that fulfilled the $d$-neighbourly property in $G$ as this now has $o \in \widetilde{V} \setminus B$ as an additional neighbour.
    As $G$ has a $(d+1)(d+2)$-core with at least $5n/9$ vertices, $\widetilde{G}$ does also.
    We can now repeat the proof of \Cref{thm:random} to show that $c_{d+1}(\widetilde{G}) = r_{d+1}(\widetilde{G}) = 2^{|\widetilde{V} \setminus \widetilde{V}(d+2)|}$.
    The result now follows from \Cref{thm:spherical+complex}, \Cref{prop:spherical+real} and the observation that $\widetilde{V}(d+2)$ equals the cone over $V(d+1)$.
\end{proof}

\subsection{Applications to PSD matrix completion problems}\label{subsec:partialmatrix}

Throughout this section, we let $\FF \in \{\RR, \CC\}$.
We say a matrix $B \in \FF^{n \times n}$ of rank at most $d$ is \emph{PSD} if it can be factored into $B = P^\top P$ for some $P \in \FF^{d \times n}$.
When $\FF = \RR$, this is equivalent to the usual notion of a PSD matrix, though we will also use the term when $\FF = \CC$.
We note that this factorisation is not unique, and we can alternatively factor $B = Q^\top Q$ if and only if $Q = XP$ for some orthogonal matrix $X \in O(d,\FF)$.
We define a rank-$d$ PSD matrix $B = P^\top P$ to be \emph{generic} if the entries of $P$ are algebraically independent.

We similarly extend our definitions of partial matrices to allow for complex PSD matrices.
A \emph{rank-$d$ partial $n \times n$ PSD matrix} is now any partial matrix for which there exists an $n \times n$ PSD matrix $B$ with rank at most $d$ which agrees with the known entries of $A$.
We say that any such PSD matrix $B$ that can be used to construct $A$ is a \emph{completion} of $A$,
and a \emph{real completion} if each entry of $B$ is real.
A rank-$d$ partial PSD matrix is now said to be \emph{generic} if it has a generic completion.

We now show how we can view PSD matrix completion as a spherical rigidity problem; see \cite{SingerCucuringu:2010} for further details.
Write the columns of $P$ as $\mathbf{p}(1), \dots, \mathbf{p}(n) \in \FF^d$.
We normalise $A$ and $P$ by multiplying by the diagonal matrix $(A_{ii}^{-1})_{i \in [n]}$: this ensures $\|\mathbf{p}(i)\| = 1$, hence $\mathbf{p}(1), \dots, \mathbf{p}(n) \in \SS^{d-1}$ (or $\SS_\CC^{d-1}$ if $\FF = \CC$).
Moreover
\[
\|\mathbf{p}(i) - \mathbf{p}(j)\|^2 = \|\mathbf{p}(i)\|^2 + \|\mathbf{p}(j)\|^2 - 2 \mathbf{p}(i) \cdot \mathbf{p}(j) = 2 - 2\mathbf{p}(i) \cdot \mathbf{p}(j) \, .
\]
As $A_{ij} = \mathbf{p}(i) \cdot \mathbf{p}(j)$ for all $ij \in E$, the partial matrix $A$ uniquely determines edge lengths $d_{ij}$ for all $ij \in E$.
This gives a bijection between completions of $A$ and spherical realisations of $G$ with edge lengths $d_{ij}$.
Furthermore, this bijection allows us to reframe the counting problem as a spherical realisation number problem.

\begin{proposition} \label{prop:matrix+completion}
    Let $A$ be a generic rank-$d$ partial PSD matrix with underlying graph $G$.
    Then $A$ has exactly $c_{d-1}^*(G)$ complex completions.
    Moreover, if $A$ is real then it has at most $r_{d-1}^*(G)$ real completions, and there exists a real generic rank-$d$ partial PSD matrix $A'$ with exactly $r_{d-1}^*(G)$ real completions.
\end{proposition}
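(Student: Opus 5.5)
We will make the correspondence sketched just before the statement into an explicit bijection and then transfer genericity across it. Let $A$ be generic with generic completion $B_0 = P_0^\top P_0$, where $P_0 \in \FF^{d\times n}$ has algebraically independent entries and columns $\mathbf{p}_0(1),\dots,\mathbf{p}_0(n)$. Since the diagonal entries $A_{ii}=\|\mathbf{p}_0(i)\|^2$ are nonzero, we will rescale to $D^{-1/2} B D^{-1/2}$ with $D=\mathrm{diag}(A_{ii})$. Over $\CC$ any choice of square roots works; over $\RR$ they are positive.

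\textbf{Step 1: the bijection.} Normalised completions $B$ of $A$ are symmetric matrices $Q^\top Q$ with $Q\in\FF^{d\times n}$ whose columns lie on $\SS^{d-1}_\FF$ and satisfy $\mathbf{q}(i)\cdot\mathbf{q}(j)=A_{ij}/\sqrt{A_{ii}A_{jj}}$ for $ij\in E$. By the identity $\|\mathbf{q}(i)-\mathbf{q}(j)\|^2=2-2\mathbf{q}(i)\cdot\mathbf{q}(j)$, these are exactly the spherical realisations of $G$ equivalent to $\tilde{\mathbf{p}}_0 := (\mathbf{p}_0(i)/\|\mathbf{p}_0(i)\|)_i$. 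The factorisation $B=Q^\top Q$ is unique up to $Q\mapsto XQ$ with $X\in O(d,\FF)$. The isometries of the sphere are precisely $O(d,\FF)$, which fix the centre; the paper implicitly uses this when counting spherical realisations modulo congruence. Hence completions of $A$ correspond bijectively to congruence classes of spherical realisations equivalent to $\tilde{\mathbf{p}}_0$. Over $\RR$, real completions correspond to real spherical realisations: a real PSD matrix of rank $\le d$ factors through a real $Q$, and this $Q$ is unique up to $O(d,\RR)$.

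\textbf{Step 2: genericity.} We need $\tilde{\mathbf{p}}_0$ to be a generic realisation on $\SS^{d-1}$, i.e.\ $\td[\QQ(\tilde{\mathbf{p}}_0):\QQ]=(d-1)n$. We will argue as in \Cref{prop:spherical+real}. The field $\QQ(P_0)$ has transcendence degree $dn$ and is algebraic over $\QQ(\tilde{\mathbf{p}}_0,\lambda_1,\dots,\lambda_n)$, where $\lambda_i=\|\mathbf{p}_0(i)\|$, because $\mathbf{p}_0(i)=\lambda_i\tilde{\mathbf{p}}_0(i)$. Thus $\td[\QQ(\tilde{\mathbf{p}}_0):\QQ]\ge dn-n$, and the reverse inequality holds since each point satisfies the sphere equation. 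Granting this, the complex count equals $c^*_{d-1}(G)$ by definition.

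For the real statement, the number of real completions of a real generic $A$ equals $r_{d-1}(G,\tilde{\mathbf{p}}_0)$ on the sphere, which is at most the maximum $r^*_{d-1}(G)$. For the existence claim, we take a generic real spherical realisation $\tilde{\mathbf{p}}$ attaining $r^*_{d-1}(G)$. We choose scalars $\lambda_i$ algebraically independent over $\QQ(\tilde{\mathbf{p}})$, set $P$ to have columns $\lambda_i\tilde{\mathbf{p}}(i)$, and let $A'$ be the projection of $P^\top P$. Then $\td[\QQ(P):\QQ]=dn$, so $A'$ is generic, and its normalisation gives back $\tilde{\mathbf{p}}$, so Step 1 gives exactly $r^*_{d-1}(G)$ real completions.

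The main obstacle is the bookkeeping in Step 1 over $\CC$. We must check that rank-$\le d$ complex symmetric matrices admitting some factorisation $Q^\top Q$ have that factorisation unique up to $O(d,\CC)$. This holds when the columns of $Q$ span $\CC^d$ with a nondegenerate Gram form, which is true generically; in non-generic degenerate cases we would appeal to the finiteness and genericity arguments of \cite{dewar23}. We must also check that congruence on $\SS^{d-1}_\CC$ is exactly the action of $O(d,\CC)$. We will handle both points by citing \cite[Corollary 8]{Gortler2014} together with the conventions of \cite{dewar23} for spherical congruence.
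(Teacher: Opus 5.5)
Your proposal is correct and follows the paper's route: the paper gives no separate proof, only the normalisation bijection in the paragraph before the statement (completions $\leftrightarrow$ spherical realisations modulo $O(d,\FF)$), and you make explicit the genericity transfer and the real and existence parts by mirroring the proof of \Cref{prop:spherical+real}. The degenerate case you defer to \cite{dewar23} can be closed by a short dimension count (the paper itself just asserts uniqueness of factorisations): when the number of completions is finite, the known entries of $A$ have transcendence degree $dn-\binom{d}{2}$, whereas the known entries of any rank-$r$ completion lie in a $\QQ$-variety of dimension at most $rn-\binom{r}{2}$, which is strictly smaller for $r<d\le n$, so every completion has rank exactly $d$ and its factorisations form a single $O(d,\CC)$-orbit.
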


We will use \Cref{cor:random+spherical} and \Cref{prop:matrix+completion} to prove \Cref{thm:partialmatrix}.
Here we introduce the following analogue of graph $k$-cores for partial matrices.
The \emph{$k$-core} $A(k)$ of a partial symmetric matrix $A$ is the matrix formed by sequentially deleting rows (and their corresponding columns) with at most $k$ known entries until this is no longer possible.
Observe that if $G$ is the underlying graph of $A$, then $G(k-1)$ is the underlying graph of $A(k)$.
This follows as the $k$-core of a partial matrix also counts entries on the diagonal, while the underlying graph only sees off-diagonal entries.

\begin{proof}[Proof of \Cref{thm:partialmatrix}]
    Choose $0 \leq M \leq \binom{n}{2}$ and let $G$ be the underlying graph of $A_M$.
    Fix $\delta(A_M)$ to be the minimum number of known entries in a given row or column.
    Then $\delta(A_M)=\delta(G)+1$ and $G(d)$ is the underlying graph of $A_M(d+1)$.
    By \Cref{prop:matrix+completion}, $A_M$ has exactly $c^*_{d-1}(G)$ complex completions.
    The result now follows from applying \Cref{cor:random+spherical} with $d$ replaced by $d-1$.
\end{proof}

\begin{remark}
    All of the partial matrices we have considered require the diagonal to consist of known entries.
    This is because the reduction to a spherical rigidity problem requires us to scale each vector $p(i)$ to lie on the sphere.
    It may be possible to deduce results when a small number of diagonal entries are unknown using recent results on partial coning \cite{HJNS25}. 
\end{remark}

\subsection*{Acknowledgements}

S.\,D.\ was supported by the KU Leuven grant iBOF/23/064 and the FWO grants G0F5921N (Odysseus) and G023721N.
A.\,N.\ and B.\,S.\ were partially supported by EPSRC grant EP/X036723/1. A.\,N.\ was partially supported by UK Research and Innovation (grant number UKRI1112), under the EPSRC Mathematical Sciences Small Grant scheme.
We thank John Haslegrave for helpful discussions on the proof strategy for \Cref{thm:random} and Roan Talbut for describing how to rewrite \Cref{thm:random} to apply to Erd\H{o}s--R\'enyi random graph processes. We also acknowledge that a large language model pointed out an error in an earlier version of \Cref{lem:luczak4} in the appendix. For the purpose of open access, the author has applied a Creative Commons Attribution (CC-BY) licence to any Author Accepted Manuscript version arising.

\bibliographystyle{plainurl}
{\small
\bibliography{ref}
}

\appendix

\section{Proof of \texorpdfstring{\Cref{thm:luczaknew}}{core theorem}}
\label{appendix}

To prove \Cref{thm:luczaknew}, we translate a number of results from the random graph model $G(n,p)$ to Erd\H{o}s--R\'enyi random graph processes.

A graph property $Q$ is said to be \emph{monotonic increasing} if $Q$ holds for $G$ implies that $Q$ holds for $G+e$ for any choice of non-edge $e$ of $G$.
Similarly, $Q$ is said to be \emph{monotonic decreasing} if $Q$ holds for $G$ implies that $Q$ holds for $G-e$ for any choice of edge $e$ of $G$.
We will call a graph property \emph{monotonic} if it is monotonic increasing or decreasing.
Monotonic properties that are satisfied by $G(n,p)$ can be translated to $G(n,M)$ as follows.

\begin{corollary}\label{cor:monotone+simple}
    Let $Q$ be a monotonic graph property and suppose $m(n) = \left\lfloor \binom{n}{2} p(n) \right\rfloor \rightarrow \infty$ as $n \rightarrow \infty$.
    If $Q$ is monotonic, then
        \begin{equation*}
        \lim_{n \rightarrow \infty} \mathbb{P}[G(n,p(n)) \text{ satisfies } Q] = 1 \implies \lim_{n \rightarrow \infty} \mathbb{P}[G(n,m(n)) \text{ satisfies } Q] = 1.
        \end{equation*}
\end{corollary}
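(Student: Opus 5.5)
This is the standard monotone transfer from $G(n,p)$ to $G(n,m)$, and I would prove it by coupling the two models through the number of edges. Write $N=\binom{n}{2}$. The key fact is that, conditioned on having exactly $k$ edges, $G(n,p)$ is distributed as $G(n,k)$. Hence
\begin{equation*}
\mathbb{P}[G(n,p)\text{ satisfies } Q]=\sum_{k=0}^{N}\mathbb{P}[e(G(n,p))=k]\,\mathbb{P}[G(n,k)\text{ satisfies } Q].
\end{equation*}

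The second ingredient is monotonicity of $k\mapsto \mathbb{P}[G(n,k)\text{ satisfies }Q]$. I would establish it with the random graph process: $G(n,k,\omega)\subseteq G(n,k+1,\omega)$, and each $G(n,k,\omega)$ has the law of $G(n,k)$. Thus, for $Q$ monotonic increasing, this probability is nondecreasing in $k$. For $Q$ monotonic decreasing, it is nonincreasing in $k$.

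Now treat the increasing case. Split the sum at $k=m$: the terms with $k\le m$ are bounded by $\mathbb{P}[G(n,m)\text{ satisfies }Q]$, and the terms with $k>m$ are bounded by $1$. This gives
\begin{equation*}
\mathbb{P}[G(n,p)\text{ satisfies }Q]\le \mathbb{P}[G(n,m)\text{ satisfies }Q]+\mathbb{P}[e(G(n,p))>m].
\end{equation*}
The left-hand side tends to $1$, so it suffices that $\mathbb{P}[e(G(n,p))>m]$ stays bounded away from $1$, say $\limsup \le c<1$. We then get $\liminf \mathbb{P}[G(n,m)\text{ satisfies }Q]\ge 1-c$. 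This is not yet $1$, and closing that gap is the main obstacle.

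To close it I would use the standard trick of perturbing $p$ slightly, with the aim of forcing $\mathbb{P}[e(G(n,p'))>m]\to 0$. Concretely, take $p'=p-\epsilon_n$ with $\epsilon_n N \gg \sqrt{Np}$, chosen so that $N p'$ still tends to infinity. Chebyshev then gives $e(G(n,p'))\le m$ whp, since the mean is $Np'$ and the variance is at most $Np$. The catch is that this needs the hypothesis for $p'$ rather than $p$.

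The route around this that I would commit to is a contradiction argument on the fixed subsequence. Suppose $\mathbb{P}[G(n,m)\text{ satisfies }Q]\le 1-\eta$ along a subsequence. Monotonicity then gives $\mathbb{P}[G(n,k)\text{ satisfies }Q]\le 1-\eta$ for all $k\le m$. By the central limit theorem, $e(G(n,p))\le m$ holds with probability bounded below by some $c'>0$, because $m=\lfloor Np\rfloor$ sits within $O(1)$ of the mean while the standard deviation tends to infinity. Therefore $\mathbb{P}[G(n,p)\text{ fails }Q]\ge c'\eta$, which contradicts the hypothesis.

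This contradiction argument gives the full limit $1$ with no perturbation of $p$, so the Chebyshev step above is not needed. The only input used from $m\to\infty$ is that $Np(1-p)\to\infty$, which makes the binomial's median asymptotically at its mean; I would note the case $p\to1$ separately, where the claim is trivial. The decreasing case is symmetric: use $k\ge m$ and $\mathbb{P}[e\ge m]\ge c'$.
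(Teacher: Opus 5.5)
Your route differs from the paper's. The paper proves this in one line by citing \cite[Corollary 1.16]{JansonLuczakRucinski}. You prove the transfer directly: you condition $G(n,p)$ on its edge count, get monotonicity of $k\mapsto\mathbb{P}[G(n,k)\text{ satisfies }Q]$ from the nested process $G(n,k,\omega)\subseteq G(n,k+1,\omega)$, and use that $m=\lfloor Np\rfloor$ (with $N=\binom{n}{2}$) is asymptotically a median of $e(G(n,p))\sim\mathrm{Bin}(N,p)$. When $Np(1-p)\to\infty$ this is correct, since de Moivre--Laplace or Berry--Esseen gives $\mathbb{P}[e(G(n,p))\le m]\to 1/2$. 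It even yields directly $\mathbb{P}[G(n,m)\text{ fails }Q]\le(2+o(1))\,\mathbb{P}[G(n,p)\text{ fails }Q]$, so neither the Chebyshev detour nor the subsequence contradiction is needed. Compared with the citation, your version is self-contained and shows exactly which hypothesis is being used.

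That exposes the one wrong step. The hypothesis $m\to\infty$ gives only $Np\to\infty$, not $Np(1-p)\to\infty$. The leftover regime is also not trivial: the statement is false there. If $p<1$ but $N(1-p)\to 0$, then $m=N-1$ and $\mathbb{P}[e(G(n,p))\le m]=1-p^N\to 0$, so your constant $c'$ does not exist. Concretely, take $p=1-N^{-2}$ and the increasing property ``$G$ is complete''. Then $\mathbb{P}[G(n,p)\text{ satisfies }Q]=(1-N^{-2})^N\to 1$, while $\mathbb{P}[G(n,m)\text{ satisfies }Q]=0$. For decreasing $Q$, or when $N(1-p)$ stays bounded away from $0$, a Poisson limit for the number of non-edges does rescue your lower bound. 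So instead of dismissing $p\to 1$ as trivial, add a hypothesis such as $N(1-p)\to\infty$ (for instance, $p$ bounded away from $1$). This holds in every application in the paper, where $p=O(\log n/n)$, and with it your proof is complete. Note that the statement as printed omits such a condition.
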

\begin{proof}
    This is an immediate consequence of parts (i) and (ii) of \cite[Corollary 1.16]{JansonLuczakRucinski}.
\end{proof}

It follows from results of {\L}uczak \cite{luczak} that for any fixed $k \geq 3$, the random graph $G(n,p)$ has a $k$-connected $k$-core containing at least $5n/9$ vertices when $p(n) \geq (\log n)/n$ a.a.s.
It now seems immediate how to prove \Cref{thm:luczaknew}:
we apply \Cref{cor:monotone+simple} to the property ``$G$ contains a $k$-connected $k$-core containing at least $5n/9$ vertices'', with the knowledge that $M_1(\omega) \geq m(n)$ a.a.s.~by \cite[Theorem 1]{erdosrenyi} and \cite[Theorem 4.2]{Frieze_K_2015}.
Unfortunately, this property is not monotonic, as adding edges can reduce the connectivity of the $k$-core.
Fortunately, we can combine four different monotonic properties (3 increasing and 1 decreasing) to prove \Cref{thm:luczaknew}.


			
            


The first of our monotone graph properties, which is increasing, is
that of having a large $k$-core:\vspace{2mm}

\noindent\textbf{Property $P_{k,\varepsilon}$:}
The $k$-core of $G=([n],E)$ contains at least $n-n\exp(-\log^{\varepsilon}(n))$ vertices.

\begin{lemma}\label{lem:luczak0}
    For any fixed integer $k \geq 3$ and fixed $0<\varepsilon <1/2$, $G(n,M_2(\omega),\omega)$ satisfies Property $P_{k,\varepsilon}$ a.a.s.
\end{lemma}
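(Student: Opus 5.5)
We will derive this from a $G(n,p)$ statement together with \Cref{cor:monotone+simple} and a standard sandwiching argument. Property $P_{k,\varepsilon}$ is monotone increasing: adding an edge can only enlarge the $k$-core. So it suffices to find a deterministic sequence $m(n)$ that satisfies two conditions. First, $M_2(\omega) \geq m(n)$ a.a.s. Second, $G(n,m(n))$ satisfies $P_{k,\varepsilon}$ a.a.s. Given these, on the intersection of the two events, $G(n,M_2(\omega),\omega)$ contains $G(n,m(n),\omega)$. The latter is distributed as $G(n,m(n))$, so monotonicity transfers the property.

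For the first condition, take $p(n) = (\log n)/(2n)$, or any $p$ of order $c\log n/n$ with $c<1$, and let $m(n)=\lfloor \binom n2 p\rfloor$. The graph $G(n,m(n))$ a.a.s.\ has isolated vertices, by the classical Erd\H{o}s--R\'enyi connectivity threshold (\cite[Theorem 1]{erdosrenyi}, \cite[Theorem 4.2]{Frieze_K_2015}), so its minimum degree is $0$. Hence $M_1(\omega)\ge m(n)$ a.a.s., and since $M_2(\omega) \geq M_1(\omega)$, the same holds for $M_2(\omega)$.

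For the second condition, we need that $G(n,p)$ with $p = (\log n)/(2n)$ has $k$-core of size at least $n - n\exp(-\log^{\varepsilon} n)$ a.a.s. We would then apply \Cref{cor:monotone+simple}. This is where we lean on {\L}uczak \cite{luczak}, whose results give that for $np\to\infty$ the $k$-core misses only about $n\cdot \Pr[\mathrm{Bin}(n,p)<k]$ vertices, up to lower order corrections. With $np = \tfrac12\log n$ this count is about $n\cdot (np)^{k-1}e^{-np} = n^{1/2}\,\mathrm{polylog}(n)$. That is far smaller than $n\exp(-\log^{\varepsilon} n)$ for any fixed $\varepsilon<1/2$, since $\exp(-\log^{\varepsilon}n)\gg n^{-1/2+o(1)}$.

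If one prefers a self-contained argument, run the peeling directly. Say a vertex is \emph{light} if its degree is less than $K$ for some large constant $K\ge k$. The expected number of light vertices is $n^{1/2+o(1)}$, so by Markov's inequality there are at most $n^{0.6}$ of them a.a.s. Next use the standard fact that a.a.s.\ every set $S$ with $|S|\le n/\log^2 n$ spans at most $O(|S|)$ edges, with constant less than $K/2-k$ after choosing $K$. Peeling from the light set then cannot cascade beyond a set of size $O(n^{0.6})$: the removed set $S$ would have each vertex outside the light set sending at least $K-k$ edges into $S$, contradicting sparsity. Hence the $k$-core has at least $n-O(n^{0.6})$ vertices.

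The main obstacle is this quantitative core-size step at $p$ just below the connectivity threshold. We expect to handle it by the sparsity/peeling argument just described, or by citing {\L}uczak's precise estimate. The remaining steps are routine coupling.
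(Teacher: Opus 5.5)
Your proof is correct. Its skeleton is the same as the paper's: monotonicity of $P_{k,\varepsilon}$, \Cref{cor:monotone+simple}, and a deterministic $m(n)$ lying below $M_2(\omega)$ a.a.s. The differences are in the two inputs you feed into that skeleton.

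\textbf{Choice of $m(n)$.} The paper takes $p=\log n/n$. Then $m(n)\approx\frac12 n\log n$ sits just below the minimum-degree-$2$ threshold, and the paper argues $m(n)<M_2(\omega)$ from there. You go further below the threshold, with $np=\frac12\log n$, so isolated vertices alone give $M_2(\omega)\ge M_1(\omega)>m(n)$.

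\textbf{The core-size bound.} The paper gets it in one line by plugging $c=np=\log n$ into {\L}uczak's Theorem~2. That theorem requires $k\le c-c^{1/2+\varepsilon}$, which is what forces $\varepsilon<1/2$, and it then gives at least $n-n\exp(-c^{\varepsilon})$ core vertices, i.e.\ exactly $n-n\exp(-\log^{\varepsilon}n)$.

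Your lower density means you cannot quote that theorem verbatim. At $c=\frac12\log n$ it only yields $n-n\exp(-(\frac12\log n)^{\varepsilon'})$, so you would have to apply it with some $\varepsilon'\in(\varepsilon,1/2)$. Also, the ``$n\Pr[\mathrm{Bin}(n,p)<k]$'' description you attribute to {\L}uczak is a heuristic, not the statement being cited.

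\textbf{Your peeling argument.} This removes the dependence on {\L}uczak and proves more: a core of size $n-O(n^{0.6})$, so the lemma holds for every fixed $\varepsilon<1$. The step ``cannot cascade'' needs one detail, because sparsity is only available for sets of size at most $n/\log^2 n$.
\begin{enumerate}
\item Order the removed vertices $u_1,u_2,\ldots$ so that each $u_j$ has fewer than $k$ neighbours outside $\{u_1,\ldots,u_{j-1}\}$.
\item Count each edge at its later endpoint. Every prefix $R_t$ then satisfies $e(R_t)\ge (K-k+1)(t-|L|)$.
\item A first-moment count shows that at $np=\frac12\log n$, a.a.s.\ every set $S$ with $|S|\le n/\log^2 n$ spans fewer than $3|S|$ edges. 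This constant does not depend on $K$.
\item Take $K\ge k+3$ and $t=\lfloor n/\log^2 n\rfloor$. Then $t\le 4|L|\le 4n^{0.6}$, which is impossible for large $n$.
\item Hence the removed set never reaches size $t$, and applying the same inequality to it gives size $O(|L|)$.
\end{enumerate}

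In short, the paper's route is a single citation; yours is longer but elementary and gives a stronger conclusion.
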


\begin{proof}
    \cite[Theorem 2]{luczak} states that
    for every $\varepsilon >0$ there exists some constant $d>0$ such that for $c(n) = np > d$ and $k \leq c - c^{0.5 + \varepsilon}$, the $k$-core of $G(n,p)$ has at least $n - n \exp(-c^\varepsilon)$ vertices a.a.s.
    Setting $p := (\log n)/n$ and applying this theorem, we see that $G(n,p)$ satisfies Property $P_{k,\varepsilon}$ a.a.s.

    To get a result for $G(n,M_2(\omega),\omega)$, observe that $P_{k,\varepsilon}$ is monotonically increasing and 
    \[
    m(n) := \left\lfloor \binom{n}{2} p(n) \right\rfloor = \frac{1}{2}(n-1)\log n \longrightarrow \infty \qquad \text{as } n \longrightarrow \infty \, .
    \]
    Applying \Cref{cor:monotone+simple}, we have that $\mathbb{P}[G(n,m(n)) \text{ satisfies } P_{k,\varepsilon}] \rightarrow 1$ as $n \rightarrow \infty$.
    By \cite[Theorem 4.3]{Frieze_K_2015},
    $G(n,m(n))$ is not 2-connected a.a.s.,
    which implies $m(n) < M_2(\omega)$ a.a.s.
    Property $P_{k,\varepsilon}$ being increasing now implies $G(n,M_2(\omega),\omega)$ satisfies Property $P_{k,\varepsilon}$ a.a.s.
\end{proof}

We next consider the increasing graph property that every pair of large vertex sets must have an edge between them.
\vspace{2mm}

\noindent\textbf{Property $Q_{k,\varepsilon}$:}
For every pair of disjoint vertex sets $S_0, S_1$ of $G=([n],E)$ satisfying
\begin{equation*}
     |S_0| \leq k-1 + n \exp(-\log^{\varepsilon}(n)), \qquad n \log^{-6}(n) \leq |S_1| \leq  n/2,
\end{equation*}
there exists an edge $v_1v_2 \in E$ with $v_1 \in S_1$ and $v_2 \in (S_0 \cup S_1)^c$.

\begin{lemma}\label{lem:luczak4}
    For any fixed integer $k \geq 3$ and fixed $0 < \varepsilon < 1$, $G(n,M_2(\omega),\omega)$ satisfies Property $Q_{k,\varepsilon}$ a.a.s.
\end{lemma}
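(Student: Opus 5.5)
Following the template of \Cref{lem:luczak0}, I will first prove that Property $Q_{k,\varepsilon}$ holds a.a.s.\ for $G(n,p)$ with $p = (\log n)/n$. Since $Q_{k,\varepsilon}$ is monotone increasing, \Cref{cor:monotone+simple} then transfers it to $G(n,m(n))$ with $m(n) = \lfloor \binom{n}{2} p\rfloor$. As $m(n) < M_2(\omega)$ a.a.s.\ (by \cite[Theorem 4.3]{Frieze_K_2015}, exactly as before), monotonicity along the process gives the statement for $G(n,M_2(\omega),\omega)$. So everything reduces to a first-moment (union bound) computation in $G(n,p)$.

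Write $s_0 = |S_0|$ and $s_1 = |S_1|$, and set $T = (S_0\cup S_1)^c$. Property $Q_{k,\varepsilon}$ fails only if there are disjoint sets $S_0, S_1$ in the allowed size ranges with no edge between $S_1$ and $T$. For fixed sets this has probability $(1-p)^{s_1|T|} \le \exp(-p\, s_1 |T|)$. Now $s_1 \le n/2$ and $s_0 \le k-1+n e^{-\log^\varepsilon n} = o(n)$, so $|T| \ge n/2 - o(n) \ge n/3$ for large $n$. Hence the probability is at most $\exp(-s_1 \log n/3)$.

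The union bound sums over all choices of $S_0$ and $S_1$. The number of choices of $S_1$ with $|S_1| = s_1$ is $\binom{n}{s_1} \le (en/s_1)^{s_1}$, and since $s_1 \ge n\log^{-6} n$ this is at most $(e\log^6 n)^{s_1} = \exp\big(s_1(1+6\log\log n)\big)$. The number of choices of $S_0$ is at most $\sum_{j\le s_0^{\max}} \binom{n}{j} \le 2^{n H(\delta_n)}$ with $\delta_n = o(1)$, roughly $\exp\big(O(n e^{-\log^\varepsilon n}\log^\varepsilon n)\big)$. This is $\exp(o(n/\log^{6} n))$ provided $e^{-\log^\varepsilon n}\log^{\varepsilon+6} n \to 0$, which holds for any fixed $\varepsilon>0$ because $e^{\log^\varepsilon n}$ grows faster than any power of $\log n$.

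\textbf{Main obstacle.} The delicate point is ensuring that the $S_0$ count does not swamp the gain from the edge probability. This works because the $S_0$ count is $\exp(o(s_1))$, uniformly over $s_1 \ge n\log^{-6}n$, which is exactly where the condition on $\varepsilon$ and the $\log^{-6}$ lower bound interact. With this in hand, summing over $s_1$ gives a total of at most
\[
\sum_{s_1} \exp\Big(s_1\big(1 + 6\log\log n + o(1) - \tfrac13\log n\big)\Big) \le n \exp\big(-\Omega(n\log^{-5} n)\big) \longrightarrow 0 .
\]
This shows that $G(n,p)$ satisfies $Q_{k,\varepsilon}$ a.a.s., and the transfer argument from the first paragraph completes the proof.
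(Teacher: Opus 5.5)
Your proposal is correct and follows essentially the same route as the paper. Both arguments run a first-moment bound in $G(n,p)$ with $p=(\log n)/n$, using $|(S_0\cup S_1)^c|\ge n/3$ and $\binom{n}{s_1}\le(e\log^6 n)^{s_1}$, show the $S_0$-count is negligible because $e^{\log^\varepsilon n}$ outgrows every power of $\log n$, and then transfer to the process via \Cref{cor:monotone+simple} and $m(n)<M_2(\omega)$. The only cosmetic difference is that you count choices of $S_0$ with the entropy bound $2^{nH(\delta_n)}$, whereas the paper uses $O(n)$ times a single binomial coefficient bounded by $(en/b)^b$ with $b=k-1+n\exp(-\log^\varepsilon n)$.
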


\begin{proof}
    Let $p=\frac{\log(n)}{n}$,
    we show that $G(n,p)$ satisfies $Q_{k,\varepsilon}$ a.a.s.
    The proof that $G(n,M_2(\omega),\omega)$ satisfies $Q_{k,\varepsilon}$ then follows from the same application of \Cref{cor:monotone+simple} as in the proof of \Cref{lem:luczak0}.

    Let $A_{r,s}$ be the number of disjoint vertex set pairs $(S_0,S_1)$ with $|S_1|= r$ and $|S_0|=s$ such that there is no edge connecting $S_1$ and $(S_0 \cup S_1)^c$.
    Fix $a = n \log^{-6}(n)$ and $b = k-1 +  n\exp(-\log^{\varepsilon}(n))$.
    Write $e(X,Y)$ for the event that there are no edges between $X$ and $Y$, and recall that for large $n$:
    \begin{equation}\label{eq:facts}
      \binom{n}{m} < \biggl(\frac{e \cdot n}{m}\biggr)^{\!m},\qquad \left(\exp(1+\log^\varepsilon n) \right)^{k-1} < (e \cdot n)^{k-1},
      \qquad
      \left( 1-\frac{\log n}{n} \right)^n < e^{-\log(n)} = \frac{1}{n}.
    \end{equation}
    Using the first moment method we see that, for large $n$,
    {\allowdisplaybreaks
    \begin{align*}
        \mathbb{P} \left[ \sum_{r=\lceil a \rceil}^{\lfloor n/2 \rfloor} \sum_{s=0}^{\lfloor b \rfloor} A_{r,s} > 0  \right]
        &\leq \mathbb{E} \left[ \sum_{r=\lceil a \rceil}^{\lfloor n/2 \rfloor} \sum_{s=0}^{\lfloor b \rfloor} A_{r,s}  \right]
        = \sum_{r=\lceil a \rceil}^{\lfloor n/2 \rfloor} \sum_{s=0}^{\lfloor b \rfloor} \mathbb{E} \left[  A_{r,s}  \right] \\
        &= \sum_{r=\lceil a \rceil}^{\lfloor n/2 \rfloor} \sum_{s=0}^{\lfloor b \rfloor} \binom{n}{r}\binom{n-r}{s} \mathbb{P} \Big[ e(S_1 \, , \,(S_0 \cup S_1)^c)  ~ | ~ |S_1| = r , ~ |S_0| = s, ~ S_0 \cap S_1 = \emptyset \Big]\\
        &= \sum_{r=\lceil a \rceil}^{\lfloor n/2 \rfloor} \sum_{s=0}^{\lfloor b \rfloor} \binom{n}{r}\binom{n-r}{s} \left( 1-\frac{\log n}{n} \right)^{r(n-r-s)} \\
        &< O(n) \sum_{r=\lceil a \rceil}^{\lfloor n/2 \rfloor} \binom{n}{r} \binom{n}{b} \left( 1-\frac{\log n}{n} \right)^{r(n-r-b)} \\
        &< O(n) \sum_{r=\lceil a \rceil}^{\lfloor n/2 \rfloor} \left(\frac{n \cdot e}{b} \right)^b \left(\frac{n \cdot e}{r} \left( 1-\frac{\log n}{n} \right)^{n-r-b}\right)^r  \\
        &< O(n) \sum_{r=\lceil a \rceil}^{\lfloor n/2 \rfloor} \left(\frac{n \cdot e}{n\exp(-\log^\varepsilon n)} \right)^{k-1 + n \exp(-\log^\varepsilon n)} \left(\frac{n \cdot e}{r} \left( 1-\frac{\log n}{n} \right)^{n-r-b}\right)^r \\
        &= O(n) \sum_{r=\lceil a \rceil}^{\lfloor n/2 \rfloor} \left(\exp(1+\log^\varepsilon n) \right)^{k-1 + n \exp(-\log^\varepsilon n)} \left(\frac{n \cdot e}{r} \left( 1-\frac{\log n}{n} \right)^{n-r-b}\right)^r \\
        &< O(n) \sum_{r=\lceil a \rceil}^{\lfloor n/2 \rfloor} (e \cdot n)^{k-1}\left(\exp(1+\log^\varepsilon n) \right)^{n \exp(-\log^\varepsilon n)} \left(\frac{n \cdot e}{a} \left( 1-\frac{\log n}{n} \right)^{n/3}\right)^r \\
        &= O(n^k) \sum_{r=\lceil a \rceil}^{\lfloor n/2 \rfloor} \left(\exp(1+\log^\varepsilon n) \right)^{n \exp(-\log^\varepsilon n)} \left(e^3 \cdot \log^{18}(n) \cdot \left( 1-\frac{\log n}{n} \right)^{n}\right)^{r/3}\\
        &< O(n^k) \sum_{r=\lceil a \rceil}^{\lfloor n/2 \rfloor} \left(\exp(1+\log^\varepsilon n) \right)^{n \exp(-\log^\varepsilon n)} \left(e^{3} \cdot \frac{\log^{18}(n)}{n} \right)^{r/3} \\
        &< O(n^{k+1}) \left(\exp(1+\log^\varepsilon n) \right)^{n \exp(-\log^\varepsilon n)} \left(e^{3} \cdot \frac{\log^{18}(n)}{n} \right)^{\frac{1}{3}n\log^{-6}n}
    \end{align*}}
    where the fifth, eighth and tenth lines use the first, second and thirds parts of \cref{eq:facts} respectively.
    Applying a logarithm to the last term gives
    \begin{align*}
        O(1) + (k+1)\log n + n\left( \frac{1+\log^\varepsilon n}{\exp(\log^\varepsilon n)} + \frac{3 + 18\log \log n - \log n}{3\log^{6}n} \right) \; \sim \; - \frac{1}{3} n \log^{-5} n  \; \longrightarrow  \; -\infty.
    \end{align*}
    This now implies our desired probability converges to 0 as $n \rightarrow \infty$.
\end{proof}

Finally, we consider the monotonic graph property that small induced subgraphs are sparse.\vspace{2mm}

\noindent\textbf{The Sparsity Property:}
For every vertex subset $S \subseteq [n]$ of $G$ with induced edge set $E(S)$,
\begin{enumerate}
    \item if $|S| < 500$ then $|E(S)| \leq |S|$;
    \item if $|S| \leq 2 n \log^{-6}(n)$, then $|E(S)| < (1.25)|S|$.
\end{enumerate}
Unlike Properties $P_{k,\varepsilon}$ and $Q_{k,\varepsilon}$,
the Sparsity Property is decreasing.

\begin{lemma}\label{lem:luczak5}
    For any fixed integer $k \geq 3$, $G(n,M_k(\omega),\omega)$ satisfies the Sparsity Property a.a.s.
\end{lemma}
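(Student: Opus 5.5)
The Sparsity Property is monotone decreasing, so the plan is to prove it for $G(n,m(n))$ with $m(n)$ an a.a.s.\ upper bound on $M_k(\omega)$, and then pass to $G(n,M_k(\omega),\omega)\subseteq G(n,m(n),\omega)$. Concretely, I will take $p=2\log n/n$ (any $p=C\log n/n$ with $C>1$ works). For fixed $k$ the minimum degree of $G(n,p)$ is at least $k$ a.a.s., and by monotonicity and \Cref{cor:monotone+simple} the same holds for $G(n,m(n))$ with $m(n)=\lfloor\binom n2 p\rfloor$. Equivalently, one can cite the standard fact that $M_k(\omega)=\frac12 n(\log n+(k-1)\log\log n+O_p(1))$ \cite[Theorem 4.2]{Frieze_K_2015}, which is below $m(n)$ a.a.s. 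Hence $M_k(\omega)\le m(n)$ a.a.s., and since deleting edges preserves the property, it suffices to show that $G(n,p)$ satisfies the Sparsity Property a.a.s. The transfer from $G(n,p)$ to $G(n,m(n))$ is again \Cref{cor:monotone+simple}, now applied to a decreasing property.

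For $G(n,p)$ I would use the first moment method, as in \Cref{lem:luczak4}.

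\textbf{Part (1).} Consider sets $S$ with $s=|S|<500$ and at least $s+1$ induced edges. The expected number of such configurations is at most
\[
\sum_{s<500}\binom ns\binom{\binom s2}{s+1}p^{s+1}\le \sum_{s<500} C_s\, n^s\Big(\frac{2\log n}{n}\Big)^{s+1}=O\Big(\frac{\log^{501} n}{n}\Big)\to 0,
\]
since $s$ is bounded and each $C_s$ is a constant.

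\textbf{Part (2).} Let $s\le 2n\log^{-6}n$ and $t=\lceil 1.25s\rceil$. The expected number of sets $S$ of size $s$ with at least $t$ induced edges is at most
\[
\binom ns\binom{s^2/2}{t}p^t\le\Big(\frac{en}{s}\Big)^s\Big(\frac{e s^2 p}{2t}\Big)^t\le\Big[\frac{en}{s}\Big(\frac{e s\log n}{1.25\, n}\Big)^{1.25}\Big]^s=\Big[c\,(s/n)^{0.25}\log^{1.25}n\Big]^s .
\]
For $s\le 2n\log^{-6}n$ we have $(s/n)^{0.25}\le 2^{0.25}\log^{-1.5}n$, so the bracket is at most $c'\log^{-0.25}n\to0$ uniformly. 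Summing over $s$ needs care for small $s$.

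\begin{itemize}
\item For $s<500$, the event is already excluded with probability $1-o(1)$ by Part (1), since $1.25s>s$.
\item For $500\le s\le\sqrt n$, sharper bookkeeping gives a bracket of order $(s/n)^{0.25}\mathrm{polylog}(n)\le n^{-1/8}\mathrm{polylog}(n)$. Raised to the power $s\ge500$, the sum over this range is $o(1)$.
\item For larger $s$, the term is at most $(c'\log^{-0.25}n)^{\sqrt n}$, which is superpolynomially small, so summing over at most $n$ values of $s$ gives $o(1)$.
\end{itemize}

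I expect the main obstacle to be this uniform summation in Part (2): the per-set bound only decays like $\log^{-0.25}n$ near the top of the range, so the small-$s$ regime must be handled through the polynomial factor $(s/n)^{0.25}$ rather than the logarithmic one. The threshold comparison $M_k(\omega)\le m(n)$ should be routine given the cited minimum-degree results.
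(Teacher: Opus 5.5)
Your argument is correct. Its outer structure is the same as the paper's: prove the property for $G(n,p)$, transfer it to $G(n,m(n))$ by \Cref{cor:monotone+simple}, then combine $M_k(\omega)\le m(n)$ a.a.s.\ with the fact that the property is decreasing. The difference is in how you obtain the $G(n,p)$ statement.

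The paper takes $p=(\log n+k\log\log n)/(n-1)$, just above the $k$-connectivity threshold, and essentially cites everything:
\begin{itemize}
\item part (i) follows from the fixed-subgraph threshold theorem \cite[Theorem 5.2]{Frieze_K_2015}, applied to the finitely many graphs $H$ on fewer than $500$ vertices with $|E(H)|>|V(H)|$ (since $p=o(n^{-1/d(H)})$);
\item part (ii) is read off from {\L}uczak's sparsity lemma \cite{luczak} with $a=1.25$;
\item $m(n)\ge M_k(\omega)$ comes from $k$-connectivity of $G(n,m(n))$.
\end{itemize}

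You instead choose the cruder $p=2\log n/n$, so that $M_k(\omega)\le m(n)$ becomes a one-line minimum-degree estimate, and you prove both parts by a direct union bound. Your bound $\bigl[c\,(s/n)^{1/4}\log^{5/4}n\bigr]^s$ is right. Replacing the exponent $t$ by $1.25s$ is valid because $es\log n/(1.25n)<1$ on the whole range, and the split at $s=\sqrt n$ correctly handles the summation.

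Your route is self-contained and transparent: it shows the only inputs are $np=O(\log n)$ and the cutoff $2n\log^{-6}n$. The paper's route is shorter but rests on two external results.

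Two small simplifications:
\begin{itemize}
\item No separate treatment of $s<500$ is needed in part (2). For $1\le s\le\sqrt n$ your bracket is already at most $c\,n^{-1/8}\log^{5/4}n$, so the geometric sum is $o(1)$. This follows directly from your displayed formula, so no ``sharper bookkeeping'' is required.
\item $\binom{s^2/2}{t}$ should read $\binom{\binom{s}{2}}{t}$. This is harmless.
\end{itemize}
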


\begin{proof}
    Let $p = (\log n + k \log\log n)/(n-1)$.
    We now show $G(n,p)$ satisfies the Sparsity Property a.a.s.
    For (i), let $H$ be a fixed subgraph with $d(H) := |E(H)|/|V(H)| > 1$.
    As $p = o(n^{-1/d(H)})$, then by \cite[Theorem 5.2]{Frieze_K_2015} $G(n,p)$ does not contain $H$ a.a.s.
    If we apply this result for all graphs on less than 500 vertices with more edges than vertices, we get (i).
    For (ii), applying the unique lemma of \cite{luczak} with $a = 1.25$ and $c = n(\log(n) + k \log\log n)/(n-1)$ gives that a.a.s.~every vertex subset $S$ satisfying
    \[
        0.35\left(\frac{2.5}{c}\right)^5 e^{-8} n > 0.01 c^{-5} n > 2n \log^{-6}(n) \geq |S| \, ,
    \]
    also satisfies $|E(S)| < 1.25|S|$.
    Hence the Sparsity Property holds for $G(n,p)$ a.a.s.

    As the Sparsity Property is decreasing, we apply \Cref{cor:monotone+simple} to obtain a statement for $G(n,M,\omega)$.
    Explicitly, we have $G(n,m(n))$ a.a.s.~satisfies the Sparsity Property where
    \[
    m(n) = \binom{n}{2}p = \frac{n}{2}(\log n + k \log\log n) \longrightarrow \infty \qquad \text{as } n \longrightarrow \infty \, .
    \]
    By \cite[Theorem 4.3]{Frieze_K_2015},
    $G(n,m(n))$ is $k$-connected a.a.s.,
    which implies $m(n) \geq M_k(\omega)$ a.a.s.
    Now, since the Sparsity Property is decreasing, $G(n,M_k(\omega),\omega)$ satisfies the Sparsity Property a.a.s.
\end{proof}

Finally, we also require the following classical result of Bollobás and Thomason for our final monotonic property: $k$-connectivity.

\begin{theorem}[{\cite[Theorem 4]{BolloThomason}}]\label{thm:bollobasthompson}
    For any fixed integer $k \geq 1$, $G(n,M_k(\omega),\omega)$ is $k$-connected a.a.s.
\end{theorem}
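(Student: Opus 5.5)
The plan is a hitting-time argument. By definition $\delta(G(n,M_k(\omega),\omega)) \geq k$, so it suffices to rule out a vertex cut $S$ with $|S| \leq k-1$. Suppose such a cut exists. Let $C$ be the smallest component of $G-S$ and put $r=|C|$, so $1 \leq r \leq n/2$. Minimum degree $k$ rules out $r=1$, since a lone vertex in $C$ has all its neighbours in $S$. I would split the remaining range of $r$ into three regimes and dispose of each with a monotone property. Each property is shown in $G(n,p)$ at a suitable $p$ and transferred to the process via \Cref{cor:monotone+simple}, in the same way as in \Cref{lem:luczak0} and \Cref{lem:luczak5}.

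\emph{Large $C$, $n\log^{-6}n \le r \le n/2$.} Take $S_1=C$ and $S_0=S$; then $|S_0| \leq k-1$. Property $Q_{k,\varepsilon}$ holds at time $M_2(\omega) \leq M_k(\omega)$ by \Cref{lem:luczak4}, and it is increasing. It therefore supplies an edge from $C$ to $(S\cup C)^c$, contradicting that $C$ is a component of $G-S$.

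\emph{Small or medium $C$, $r \le n\log^{-6}n$.} Every vertex of $C$ has degree at least $k$, and all its neighbours lie in $C\cup S$. Hence $U=C\cup S$ spans at least $kr/2$ edges, while $|U| \leq r+k-1$. The Sparsity Property holds at time $M_k(\omega)$ by \Cref{lem:luczak5}, and $|U| \leq 2n\log^{-6}n$. Part (ii) then requires $kr/2 < 1.25(r+k-1)$, which fails once $r$ exceeds a constant depending on $k$ (using $k\ge 3$, so that $k/2 \ge 1.5 > 1.25$). When $|U| < 500$, part (i) requires $kr/2 \le r+k-1$, which fails for all $r > 2(k-1)/(k-2)$.

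\emph{Bounded $C$.} This leaves $2 \le r \le 2(k-1)/(k-2)$, a bounded size, and I expect it to be the main obstacle. Counting edges alone is not enough there; for example, two adjacent degree-$k$ vertices sharing $k-1$ common neighbours in $S$ is not excluded by sparsity. For this case I would prove directly in $G(n,p)$, with $p=(\log n+(k-1)\log\log n-\omega_n)/n$ and then at the hitting time, that a.a.s.\ no two vertices of degree at most $k+c$ lie within bounded distance of each other. This is done by a first-moment count: the expected number of such configurations is $O(n^2 (np)^{O(1)} \cdot (\text{prob.\ of low degree})^2 \cdot p)=o(1)$.

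This property is not monotone, so the transfer step needs care. I would sandwich $M_k(\omega)$ between $m_-$ and $m_+$, where $m_\pm$ correspond to $p_\pm=(\log n+(k-1)\log\log n\pm\omega_n)/n$. One then argues that in the window $[m_-,m_+]$ only $O(\omega_n\, n)$ edges are added, and a.a.s.\ none of them lands so as to create such a close pair among the $O(e^{\omega_n})$ low-degree vertices. In a small component $C$ with $r\ge 2$, every vertex of $C$ has degree at most $r-1+k-1=O(1)$, and the vertices of $C$ are pairwise within distance $r$. So the bounded case contradicts the close-pair property, which completes the argument.
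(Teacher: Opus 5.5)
Your proposal has a genuine gap: the statement is for every fixed $k\ge 1$, but your argument only works for $k\ge 3$, and the cases $k\in\{1,2\}$ are never treated. There the medium-range step fails outright. For $k\le 2$ the inequality $kr/2<1.25(r+k-1)$ holds for \emph{every} $r$, so the Sparsity Property gives no contradiction, and your bound $2(k-1)/(k-2)$ is meaningless at $k=2$. Concretely, a tree component of size $r$ at time $M_1(\omega)$ satisfies both parts of the Sparsity Property. So does a set $C$ at time $M_2(\omega)$ inducing a path whose two ends are joined to a cut vertex $s$, since then $G[C\cup\{s\}]$ is a cycle. The lemmas you lean on also do not reach these cases. \Cref{lem:luczak4,lem:luczak5} are stated only for $k\ge 3$, and \Cref{lem:luczak4} only gives $Q_{k,\varepsilon}$ from time $M_2(\omega)$ onward, so it says nothing at the hitting time $M_1(\omega)$.

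For $k\ge 3$ your argument is sound, and it is a genuinely different route from the paper, which gives no proof of this statement and simply imports it from Bollob\'as--Thomason. You instead rebuild the hitting-time result from the appendix machinery plus a sandwiched close-pair lemma. There is no circularity, since \Cref{lem:luczak4,lem:luczak5} locate $M_2(\omega)$ and $M_k(\omega)$ using fixed-$m$ results only. (A small slip: at $m_-$ there are about $(\log n)^{c+1}e^{\omega_n}$ vertices of degree at most $k+c$, not $O(e^{\omega_n})$. This is harmless, because each bad configuration still costs a factor $1/n$.)

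To cover all $k$ uniformly, strengthen your close-pair lemma. Call a vertex small if its degree is below $\epsilon\log n$, and show, with the same sandwich around $M_k(\omega)$, that small vertices are a.a.s.\ pairwise at distance at least $3$. Now take connected $C$ with $2\le r\le n\log^{-6}n$. Each small vertex of $C$ has a neighbour in $C$, that neighbour is large, and distinct small vertices get distinct neighbours. Hence at least $r/2$ vertices of $C$ have their at least $\epsilon\log n$ neighbours inside $C\cup S$. This gives $|E(C\cup S)|\ge \epsilon r\log n/4>1.25(r+k-1)$ for large $n$, contradicting part (ii) of the Sparsity Property for every $k\ge 1$; it also absorbs your bounded case. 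For $k=1$ you must additionally rerun the first-moment computation of \Cref{lem:luczak4} at $p=(\log n-\omega_n)/n$, which goes through with room to spare. The proof of \Cref{lem:luczak5} works verbatim for $k\le 2$. Alternatively, simply cite the classical hitting-time results for $k\le 2$.
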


\begin{proof}[Proof of \Cref{thm:luczaknew}]
    Fix $\varepsilon$ to any positive constant; e.g.~$\varepsilon = 0.1$.
    By \Cref{lem:luczak0}, \Cref{lem:luczak4}, \Cref{lem:luczak5} and \Cref{thm:bollobasthompson}, and using the monotonicity of each property,
    a.a.s.~our random choice of ordering $\omega$ of $E(K_n)$ satisfies the following properties for each graph $G(n,M,\omega)$:
    \begin{enumerate}
        \item if $M \geq M_2$ then $G(n,M,\omega)$ satisfies Properties $P_{k,\varepsilon}$ and $Q_{k,\varepsilon}$;
        \item if $M \leq M_k$ then $G(n,M,\omega)$ satisfies the Sparsity Property;
        \item if $M \geq M_k$ then $G(n,M,\omega)$ is $k$-connected (and hence is equal to its $k$-core).
    \end{enumerate}
    It suffices to show any graph $G=(V,E)$ with sufficiently large $n:= |V|$ that satisfies Properties $P_{k,\varepsilon}$ and $Q_{k,\varepsilon}$ and the Sparsity Property has a $k$-connected $k$-core containing at least $5n/9$ vertices.

    Since $\exp(-\log^{\varepsilon}(n)) \rightarrow 0$ as $n \rightarrow \infty$ and $n$ has been chosen to be sufficiently large,
    we have $n-n\exp(-\log^{\varepsilon}(n)) \geq 5n/9$.
    Property $P_{k,\varepsilon}$ now guarantees that the $k$-core of $G$ contains at least $5n/9$ vertices.
    It now remains to prove that the $k$-core of $G$ is $k$-connected.

    Suppose for contradiction that the $k$-core of $G$ is not $k$-connected.
    Then there exists a partition $S_1,S_2,T,U$ of the vertices of $G$ so that $S_1 \cup S_2 \cup T$ induces the $k$-core of $G$, $|T| \leq k-1$ and there are no edges connecting the sets $S_1,S_2$.
    We pick $T$ to be minimal with respect to this property: in particular, we may assume that every vertex in $T$ is adjacent to vertices in both $S_1$ and $S_2$.
    By relabelling we may suppose that $|S_2| \geq |S_1|$.
    As $S_1, S_2, T$ is a vertex partition of the $k$-core,
    we have that $|S_1|+|S_2|+|T| \geq n- n \exp (-\log^\varepsilon(n))$.

    \begin{claim}\label{claim:sparsity}
        For all $k \geq 3$, the induced subgraph $G[S_1 \cup T]$ has at least $1.25(|S_1| + |T|)$ edges.
    \end{claim}
    \begin{claimproof}
    By our assumption that the $k$-core is not $k$-connected, we must have $S_1 \neq \emptyset$.
    Moreover, as every vertex in $S_1$ has degree at least $k$ in $G[S_1 \cup T]$ and $|T| \leq k-1$, we have $|S_1| \geq 2$.
    Let $v,w$ be distinct vertices in $S_1$.
    By the Sparsity Property, $G$ contains no subgraph on less than 500 vertices that has more edges than one plus the number of vertices.
    Thus $v,w$ must have at most two common neighbours in $G[S_1 \cup T]$.
    Moreover, if they have two common neighbours then $vw\notin E(G)$.
    As every vertex in $S_1$ has degree at least $k$ in $G[S_1 \cup T]$, we have
    \[
    |N_G(v) \cap N_G(w)| \geq 
    \begin{cases}
    2(k-1) -1 & vw \in E(G) \\
    2k -2 & vw \notin E(G)
    \end{cases} \, .
    \]
    When combined with $|S_1| \geq 2$, it follows that the number of vertices of $V[S_1 \cup T]$ is at least
    \begin{equation*}\label{eq:S-geq-T}
        |S_1 \cup T| \geq 2k - 1 \qquad \implies \qquad |S_1| \geq 2k - 1 - |T| \geq k > |T| \, .
    \end{equation*}
    We will also repeatedly use the fact that the number of edges in $G[S_1 \cup T]$ is at least
    \[
    \big|E[S_1 \cup T]\big| = \frac{1}{2}\sum_{v \in S_1 \cup T} \deg_{G[S_1 \cup T]}(v) \geq \frac{1}{2}\sum_{v \in S_1} \deg_{G[S_1 \cup T]}(v) \geq \frac{1}{2}k|S_1| \, .
    \]
    Observe that 
    \begin{align} \label{eq:1.25+edges}
        \frac{1}{2}k|S_1| \geq \frac{5}{4}(|S_1| + |T|) \quad \Longleftrightarrow \quad |S_1| \geq \frac{5}{2k-5}|T| \, .
    \end{align}
    As $|S_1| > |T|$, \eqref{eq:1.25+edges} always holds for $k\geq 5$, and hence $G[S_1 \cup T]$ has at least $1.25(|S_1| + |T|)$ edges.
    For the remaining cases of $k =3,4$, we observe that 
    \begin{align} \label{eq:small+k}
    \frac{1}{2}k|S_1| > |S_1| + (k-1) \quad \Longleftrightarrow \quad |S_1| > 2 + \frac{2}{k-2} \, .    
    \end{align}
    If the right-hand side of \eqref{eq:small+k} holds then $G[S_1 \cup T]$ has more edges than vertices.
    In this case the Sparsity Property gives that $|S_1 \cup T| \geq 500$, which in turn implies $|S_1|\geq 495$ as $|T| \leq k-1 \leq 3$.
    These parameters also satisfy \eqref{eq:1.25+edges} when $k=3,4$, and so $G[S_1 \cup T]$ has at least $1.25(|S_1| + |T|)$ edges if the right-hand side of \eqref{eq:small+k} holds.
    We note that as $|S_1| \geq k$, the right-hand side of \eqref{eq:small+k} always holds when $k=4$.
    
    The only remaining cases are where $k=3$ and $|S_1| \in \{3,4\}$.
    By the Sparsity Property, in these cases $G[S_1 \cup T]$ can have at most $|S_1 \cup T|$ edges.
    Moreover, our assumption on the minimality of $T$ implies each $t \in T$ has degree at least one.
    Hence, by a degree counting argument, we have 
    \[
    |S_1| + |T| \geq \big|E[S_1 \cup T]\big| =\frac{1}{2} \sum_{v \in S_1 \cup T} \deg_{G[S_1 \cup T]}(v) \geq  \frac{3}{2}|S_1| + \frac{1}{2} |T| \, .
    \]
    This implies $|S_1| \leq |T|$, giving a contradiction.
    \end{claimproof}

    \Cref{claim:sparsity} along with the Sparsity Property and $|S_1| > |T|$ now imply that
    \begin{equation*}
        |S_1| + |T| > 2 n \log^{-6}(n) \qquad \implies \qquad |S_1| > \frac{1}{2}\Big(|S_1| + |T|\Big) > n \log^{-6}(n).
    \end{equation*}
    Now fix $S_0 = T \cup U$. Then $|S_0| \leq k-1 + n\exp(-\log^\varepsilon(n))$.
    However, there are no edges between $S_1$ and $S_2$, contradicting Property $Q_{k,\varepsilon}$.
    It follows that the $k$-core of $G$ is $k$-connected, which concludes the proof.
\end{proof}

\end{document}